\newtheorem{Theorem}{Theorem}[section]
\newtheorem{Definition}[Theorem]{Definition}
\newtheorem{Lemma}[Theorem]{Lemma}
\newtheorem{Corollary}[Theorem]{Corollary}
\newtheorem{Remark}[Theorem]{Remark}
\numberwithin{equation}{section}
\begin{document}

\def\le{\left}
\def\r{\right}
\def\cost{\mbox{const}}
\def\a{\alpha}
\def\d{\delta}
\def\ph{\varphi}
\def\e{\epsilon}
\def\la{\lambda}
\def\si{\sigma}
\def\La{\Lambda}
\def\B{{\cal B}}
\def\A{{\mathcal A}}
\def\L{{\mathcal L}}
\def\O{{\mathcal O}}
\def\bO{\overline{{\mathcal O}}}
\def\F{{\mathcal F}}
\def\K{{\mathcal K}}
\def\H{{\mathcal H}}
\def\D{{\mathcal D}}
\def\C{{\mathcal C}}
\def\M{{\mathcal M}}
\def\N{{\mathcal N}}
\def\G{{\mathcal G}}
\def\T{{\mathcal T}}
\def\R{{\mathbb R}}
\def\I{{\mathcal I}}

\def\bw{\overline{W}}
\def\phin{\|\varphi\|_{0}}
\def\s0t{\sup_{t \in [0,T]}}
\def\lt{\lim_{t\rightarrow 0}}
\def\iot{\int_{0}^{t}}
\def\ioi{\int_0^{+\infty}}
\def\ds{\displaystyle}
\def\pag{\vfill\eject}
\def\fine{\par\vfill\supereject\end}
\def\acapo{\hfill\break}

\def\beq{\begin{equation}}
\def\eeq{\end{equation}}
\def\barr{\begin{array}}
\def\earr{\end{array}}
\def\vs{\vspace{.1mm}   \\}
\def\rd{\reals\,^{d}}
\def\rn{\reals\,^{n}}
\def\rr{\reals\,^{r}}
\def\bD{\overline{{\mathcal D}}}
\newcommand{\dimo}{\hfill \break {\bf Proof - }}
\newcommand{\nat}{\mathbb N}
\newcommand{\E}{\mathbb E}
\newcommand{\Pro}{\mathbb P}
\newcommand{\com}{{\scriptstyle \circ}}
\newcommand{\reals}{\mathbb R}

\def\Amu{{A_\mu}}
\def\Qmu{{Q_\mu}}
\def\Smu{{S_\mu}}
\def\H{{\mathcal{H}}}
\def\Im{{\textnormal{Im }}}
\def\Tr{{\textnormal{Tr}}}
\def\E{{\mathbb{E}}}
\def\P{{\mathbb{P}}}
\def\span{{\textnormal{span}}}
\title{SPDEs on narrow domains and on graphs: an asymptotic approach}
\author{Sandra Cerrai, Mark Freidlin\\
\vspace{.1cm}\\
Department of Mathematics\\
 University of Maryland\\
College Park\\
 Maryland, USA
}

\date{}

\maketitle

\begin{abstract}
We introduce here a  class of stochastic partial differential equations defined on a graph and we show how they are obtained as the limit of suitable stochastic partial equations defined in a narrow channel, as the width of the channel goes to zero. To our knowledge, this is the first time an SPDE on a graph is studied.
\end{abstract}

\section{Introduction}

Let $G$ be a bounded domain in $\mathbb{R}^2$, having a smooth boundary $\partial G$. We consider here the following stochastic partial differential equation (SPDE) in $G$, with Neumann boundary conditions
\begin{equation}
\label{intro1}
\le\{
\begin{array}{l}
\ds{\frac{\partial u_\e}{\partial t}(t,x,y)=\frac 12\,\frac{\partial^2 u_\e}{\partial x^2}(t,x,y)+\frac 1{2\e^2}\frac{\partial^2 u_\e}{\partial y^2}(t,x,y)+b(u_\e(t,x,y))+\frac{\partial w^Q}{\partial t}(t,x,y),}\\
\vs
\ds{\frac{\partial u_\e}{\partial \nu_\e}(t,x,y)=0,\ \ \ (x,y) \in\,\partial G,\ \ \ \ u_\e(0,x,y)=u_0(x,y).}
\end{array}\r.
\end{equation}
Here $w^Q(t)$ is a cylindrical Wiener process in $L^2(G)$ and $\nu_\e=\nu_\e(x,y)$ is the unit interior conormal at $\partial G$, corresponding to the second order differential operator
\[{\cal L}_\e=\frac 12\,\frac{\partial^2}{\partial x^2}+\frac 1{2\e^2}\frac{\partial^2 }{\partial y^2}.\] The functions $b$ and $u_0$ and the noise $w^Q(t)$ are assumed to be regular enough  so that equation \eqref{intro1} admits a unique mild solution for every $\e>0$ (see below for all details).

After an appropriate change of variables, equation \eqref{intro1} can be obtained from the equation
\begin{equation}
\label{intro2}
\le\{\begin{array}{l}
\ds{\frac{\partial v_\e}{\partial t}(t,x,y)=\frac 12\,\Delta v_\e(t,x,y)+b(v_\e(t,x,y))+\sqrt{\e}\,\frac{\partial w^{Q_\e}}{\partial t}(t,x,y),}\\
\vs
\ds{\frac{\partial v_\e}{\partial \hat{\nu}_\e}(t,x,y)=0,\ \ \ (x,y) \in\,\partial G_\e,\ \ \ \ v_\e(0,x,y)=u_0(x,y/\e),}
\end{array}\r.
\end{equation}
where $G_\e$ is the narrow domain $\{(x,y) \in\,\reals^2\,:\,(x,y/\e) \in\,G\}$ and $\hat{\nu}_\e(x,y)$ is the inward unit normal vector at $\partial G_\e$. Reaction-diffusion equations of the same type as \eqref{intro2}, with or without additional noise, arise, for example, in models for the motion of molecular motors. Actually, one of the possible ways to model Brownian motors/ratchets is to describe them as
particles  traveling along a designated track, and the designated track along which the molecule/particle
is traveling can be viewed as a tubular domain with many  {\em wings} added to it. To this purpose, see e.g. \cite{fhb}.

In this paper, we are interested in the limiting behavior of the solution $u_\e$ of equation \eqref{intro1}, as $\e\downarrow 0$.  For this purpose, suppose for a moment that the noisy term $\partial w^Q/\partial t$ is replaced by a regular enough function $h(t,x,y)$ and, for the sake of brevity,  assume $b(u)=0$. If $(X^\e(t),Y^\e(t))$ is the diffusion process in $G\cup \partial G$ governed by the operator ${\cal L}_\e$ inside $G$ and undergoing instantaneous reflections at $\partial G$, with respect to the co-normal associated with ${\cal L}_\e$, then, as is well known (see, for example, \cite{fred}), the solution of \eqref{intro1} can be written in the form
\begin{equation}
\label{variation-intro}
u_\e(t,x,y)=S_\e(t) u_0(x,y)+\int_0^t S_\e(t-s)h(s,\cdot)(x,y)\,ds,\end{equation}
where,  for any $\varphi \in\,B_b(G)$,
\[S_\e(t)\varphi(x,y)=\mathbb{E}_{(x,y)}f(X^\e(t),Y^\e(t)).\]

The process $(X^\e(t),Y^\e(t))$ has a slow component $X^\e(t)$ and a fast component $Y^\e(t)$, if $0<\e<<1$. This means that, before $X^\e(t)$ changes a little, the $y$-component of the process hits the boundary many times. This leads to an additional drift in the limit of the $x$-component of the process, due to the changing width of the domain and to the averaging of the function $h(s,X^\e(t-s),Y^\e(t-s))$. Moreover, for a given $x$, the intersection of the domain $G$ with the vertical line containing $(x,0)$ can consist of several connected components (see e.g. the intervals $l_1(x)$ and $l_2(x)$ in Figure 1). This leads to the fact that the slow component of the process $(X^\e(t),Y^\e(t))$ lives on the graph $\Gamma$ (see again Figure 1).

This graph, actually, {\em counts} all normalized ergodic invariant measures of the two-dimensional process $(\hat{X}(t),\hat{Y}(t))$ in $G\cup \partial G$, where $\hat{X}(t)=\hat{X}(0)=x$ and $\hat{Y}(t)$ is the one-dimensional Wiener process with instantaneous reflection on $\partial G$. The process $(\hat{X}(t),\hat{Y}(t))$, up to a time change, is our non perturbed system. Thus, the slow component of the perturbed system (the process $(X^\e(t),Y^\e(t))$ on $G\cup \partial G$) is the projection $\Pi^\e(t)=\Pi(X^\e(t),Y^\e(t))$ of $(X^\e(t),Y^\e(t))$ on the simplex of normalized invariant measures of the original system.
Moreover, the graph $\Gamma$ parametrizes extreme points of the simplex and any point of the simplex is a linear convex combination of the extreme points. 

\begin{figure}{h}
\centering
\includegraphics[scale=1]{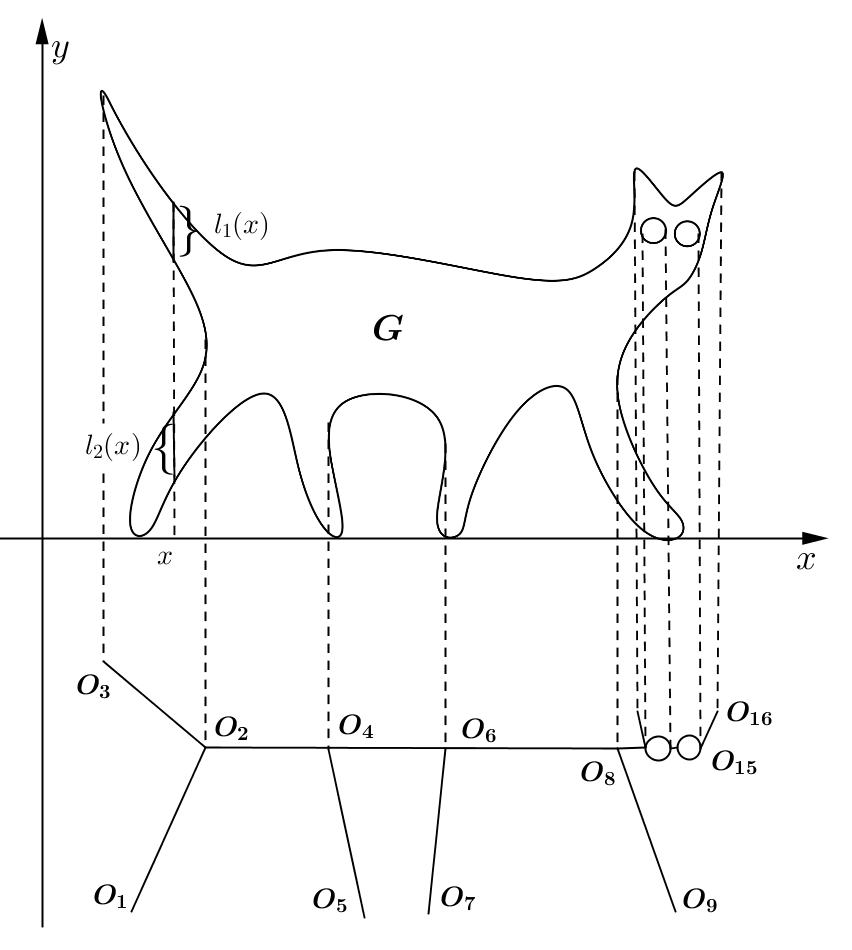}
\caption{}
\end{figure}

\medskip

In \cite{fw12} it has been proven that the process $\Pi^\e(t)$ converges, as $\e\downarrow 0$, to a continuous Markov process $\bar{Z}(t)$ on the graph $\Gamma$. More precisely, it has been proven that for any $z \in\,G$ and any bounded and continuous functional $F$ on $C([0,T];\Gamma)$, with $T>0$,  
\begin{equation}
\label{limiteintro}
\lim_{\e\to 0}\, \E_z F(\Pi^\e(\cdot))=\bar{\E}_{\Pi(z)}F(\bar{Z}(\cdot)).
\end{equation}
Notice that in \cite{fw12} the generator $\bar{L}$ of the Markov process $\bar{Z}(t)$  is explicitly described in terms of certain second order differential operators ${\cal L}_k$, acting in the interior of each edge $I_k$ of $\Gamma$, and of suitable gluing conditions, given at the vertices of $\Gamma$.  

Since in \eqref{variation-intro} the solution $u_\e$ of equation \eqref{intro1}, with $\partial w^Q/\partial t$ replaced by the regular function $h$, has been represented in terms of the process $(X^\e(t),Y^\e(t))$, one would like to be able to use  \eqref{limiteintro} to study the limiting behavior of $u_\e$ on $[0,+\infty)\times \Gamma$, as $\e\downarrow 0$. As a matter of fact, we have shown that for any $\varphi \in\,C(\bar{G})$, $z \in\,G$ and $0<\tau<T$, it holds
\begin{equation}
\label{cfintro1}
\lim_{\e\to 0}\sup_{t \in\,[\tau,T]}\le|\E_z\,\varphi(X^\e(t),Y^\e(t))-\bar{\E}_{\Pi(z)}\,\varphi^\wedge(\bar{Z}(t))\r|=0,\end{equation}
where
\[\varphi^\wedge (x,k)=\frac 1{l_k(x)}\int_{C_k(x)}\varphi(x,y)\,dy,\ \ \ \ (x,k) \in\,\Gamma,\]
and $l_k(x)$ is the length of the connected component $C_k(x)$ of the section $C(x)=\{(x,y) \in\,G\}$, corresponding to the edge $I_k$.

As a consequence of \eqref{cfintro1}, we have obtained that for any $\varphi \in\,C(\bar{G})$
\begin{equation}
\label{intro5}
\lim_{\e\to 0}\sup_{t \in\,[\tau,T]}|S_\e(t)\varphi-\bar{S}(t)^\vee\varphi|_{L^2(G)}=0,\end{equation} where
$\bar{S}(t)$ is the Markov transition semigroup associate with $\bar{L}$ and, for any $A \in\,\mathcal{L}(L^2(G))$,
\[A^\vee \varphi(x,y)=A\varphi^\wedge (\Pi(x,y)),\ \ \ (x,y) \in\,G.\]
In particular, due to \eqref{intro5}, we have shown that
\[\lim_{\e\to 0}\sup_{t \in\,[\tau,T]}|u_\e(t)-\bar{u}(t)\circ \Pi|_{L^2(G)}=0,\]
where $\bar{u}$ is the solution of the 
partial differential equation on $\Gamma$
\begin{equation}
\label{intro6}
\frac{\partial \bar{u}}{\partial t}(t,x,k)=\bar{L} \bar{u}(t,x,k)+h^\wedge (t,x,k),\ \ \ \ \bar{u}(0,x,k)=u_0^\wedge (x,k),
\end{equation}
endowed with suitable gluing conditions at the vertices of $\Gamma$.

We would like to stress the fact that \eqref{cfintro1} is not a straightforward consequence of \eqref{limiteintro}. Actually, \eqref{cfintro1} is a consequence of the following two limits
\begin{equation}
\label{cf6intro}
\lim_{\e\to 0}\sup_{t \in\,[\tau,T]}\le|\E_z\,\le[\varphi(Z^\e(t))-\varphi^\wedge(\Pi^\e(t))\r]\r|=0,
\end{equation}
and
\begin{equation}
\label{cf100intro}
\lim_{\e\to 0}\sup_{t \in\,[\tau,T]}\le|\mathbb{E}_z \varphi^\wedge(\Pi^\e(t))-\bar{{\mathbb E}}_{\Pi(z)}\varphi^\wedge(\bar{Z}(t))\r|=0.
\end{equation}

Limit \eqref{cf100intro} would be an immediate  consequence of \eqref{limiteintro}, if for any $\varphi \in\,C(\bar{G})$, the function $\varphi^\wedge$ were a continuous function on $\bar{\Gamma}$. Unfortunately, in general $\varphi^\wedge$ is not continuous at the internal vertices of $\Gamma$, so that the proof of \eqref{cf6intro} requires a thorough analysis, which also involves a few  estimates of the exit times of the process $Z_\e(t)$ from suitable small neighborhoods of the points $(x,y) \in\,\partial G$ where $\nu_2(x,y)=0$.

Concerning limit \eqref{cf6intro}, it follows from an  averaging argument, but its proof requires a suitable localization in time in the same spirit of Khasminski's paper \cite{khas}. Here the localization procedure is more delicate than in the classical setting considered by Khasminski, as it involves a stochastic differential equation with reflection and hence requires suitable estimates for the time increments of the local time of the process $(X^\e(t),Y^\e(t))$ at the boundary of $G$.

Now, once we have obtained \eqref{intro5}, we go back to the original problem, where equation \eqref{intro1} is perturbed by a cylindrical Wiener process $w^Q(t)$ given by
\[w^Q(t,x,y)=\sum_{k=1}^\infty (Qe_k)(x,y)\beta_k(t).\]
Here $Q$ is a bounded linear operator in $L^2(G)$, $\{e_k\}_{k \in\,\nat}$ is a complete orthonormal system in $L^2(G)$, and $\{\beta_k(t)\}_{k \in\,\nat}$ is a sequence of mutually independent Brownian motions. 
Under standard conditions on $b$ and  $Q$ (see e.g. \cite{DPZ} for all details), for any $\e>0$ equation \eqref{intro1} admits a unique mild solution $u_\e$. More precisely, there exists a unique adapted process $u_\e$ in $L^p(\Omega, C([0,T];L^2(G)))$, for any $T>0$ and $p\geq 1$, such that
\[u_\e(t)=S_\e(t)u_0+\int_0^t S_\e(t-s)b(u_\e(s))\,ds+\int_0^tS_\e(t-s)dw^Q(s).\]

In fact, here we assume the stronger condition that the covariance $QQ^\star$ of the cylindrical Wiener pricess $w^Q(t)$ is a trace class operator in $L^2(G)$, so that $w^Q(t) \in\,L^2(\Omega;L^2(G))$. In view of this, we have that the process
\[\bar{w}^Q(t):=w^Q(t)^\wedge=\sum_{k=1}^\infty (Qe_k)^\wedge \beta_k(t),\ \ \ \ t\geq 0,\]
is well defined in $L^2(\Omega;L^2(\Gamma,\nu))$, where $\nu$ is the invariant measure associated with the process $\bar{Z}(t)$. Thus, as $\bar{S}(t)$ is a contraction in $L^2(\Gamma,\nu)$, the process
\[\bar{w}_{\bar{L}}(t):=\int_0^t\bar{S}(t-s)d\bar{w}^Q(s),\ \ \ \ t\geq 0,\]
belongs to $L^p(\Omega;C([0,T];L^2(\Gamma,\nu)))$, for any $T>0$ and $p\geq 1$. In particular, this implies the following SPDE on the graph $\Gamma$
\begin{equation}
\label{intrograph}
d\bar{u}(t)=\le[\bar{L} \bar{u}(t)+b(\bar{u}(t))\r]\,dt+d\bar{w}^Q(t),\ \ \ \ \ \bar{u}(0)=u_0^\wedge,
\end{equation}
is well posed in $L^p(\Omega;C([0,T];L^2(\Gamma,\nu)))$.
We would like to stress that, as to our knowledge, this seems to be the first time a stochastic partial differential equation on a graph is studied.

Once we have obtained the well posedness of equation \eqref{intrograph}, we have shown that, as in the deterministic case,  $\bar{u}$ can be obtained as the limit of the solution $u_\e$ of equation \eqref{intro1}, as $\e\downarrow 0$. To this purpose, we have first shown that 
\[\lim_{\e\to 0}\mathbb{E}\sup_{t \in\,[0,T]}\le|\int_0^t S_\e(t-s)dw^Q(s)-\bar{w}_{\bar{L}}(t)\circ \Pi\r|_{L^2(G)}=0.\]
And, as $b$ is assumed to be Lipschitz-continuous, we have shown that this implies that
\[\lim_{\e\to 0}\mathbb{E}\sup_{t \in\,[\tau,T]}|u_\e(t)-\bar{u}(t)\circ \Pi|_{L^2(G)}=0.\]

\section{Some notations and a preliminary result}
\label{sec1}

In this section we introduce some notations and recall some important results from \cite{fw12}.

\subsection{The domain $G$, the narrow channel $G_\e$ and the graph $\Gamma$}

Let $G$ be a bounded open domain in $\reals^2$, having a  smooth boundary $\partial G$ (of class $C^3$), and for any $(x,y) \in\,\partial G$ let $\nu(x,y)$ denote the unit inward normal vector at the point $(x,y)$.  In what follows, we shall assume that $G$ satisfies the {\em uniform exterior sphere} condition, that is there exists $r_0>0$ such that for any $z \in\,\partial G$ there exists $z^\prime \in\,\reals^2$ with $|z-z^\prime|=r_0$ and $B(z^\prime,r_0)\cap G=\emptyset$. As a consequence of this assumption, there exists some constant $\kappa_0\geq 0$ such that for any $z \in\,\partial G$ and $z^\prime \in \bar{G}$
\begin{equation}
\label{boundarycond}
\langle z-z^\prime,\nu(z)\rangle-\kappa_0\,|z-z^\prime|^2\leq 0
\end{equation}
(for a proof see \cite{ls84}).

Now, for every $\e>0$ we introduce the {\em narrow channel} associated with $G$
\[G_\e:=\le\{\,(x,y) \in\,\reals^2\,:\, (x,\e^{-1}y) \in\,G\,\r\},\]
and we denote by $\nu^\e(x,y)$ the unit inward normal vector at the point $(x,y) \in\,\partial G_\e$. Notice that 
\begin{equation}
\label{normal}
\nu^\e(x,\e y)=c_{\e}(x,y)(\e\,\nu_1(x,y), \nu_2(x,y)),\ \ \ \ (x,y) \in\,\partial G,\end{equation}
for some function $c_\e:\partial G\to [1,+\infty)$, such that
\[\sup_{\substack{(x,y) \in\,\partial G\\ \e>0}}c_\e(x,y)=c<\infty.\]

\medskip

In what follows, we shall  assume that the region $G$ satisfies the following properties.
\begin{enumerate}
\item[I.] There are only finitely many   $x \in\,\mathbb{R}$ for which $\nu_2(x,y)=0$, for some  $(x,y) \in\,\partial G$.
\item[II.] For every $x \in\,\reals$, the cross-section
$C(x)=\le\{ (x,y) \in\,G\r\}$
consists of a finite union of intervals. Namely, when $C(x)\neq \emptyset$, there exist $N(x) \in\,\nat$ and  intervals $C_1(x),\ldots,C_{N(x)}(x)$ such that
\[ C(x)=\bigcup_{k=1}^{N(x)}C_k(x).\]
\item[III.] If $x \in\,\mathbb{R}$ is such that $\nu_2(x,y) \neq 0$, then for any $k=1,\ldots,N(x)$ we have
 \[l_k(x)=|C_k(x)|> 0.\]
\end{enumerate}

If  we identify the points of each connected component $C_k(x)$ of each cross section $C(x)$, we obtain a graph $\Gamma$, with a finite number of vertices $O_i$, corresponding to the connected components containing  points $(x,y) \in\,\partial G$ such that $\nu_2(x,y)=0$, and with a finite number of edges $I_k$, connecting the vertices. On our graph there are two different types of vertices, exterior ones, that are connected to only one edge of the graph, and interior ones, that are connected to two or more edges. See Fig. 1.

On the graph $\Gamma$ a distance can be introduced in the following way. If $y_1=(x_1,k)$ and $y_2=(x_2,k)$ belong to the same edge $I_k$, then $d(y_1,y_2)=|x_1-x_2|$. In the case $y_1$ and $y_2$ belong to different edges, then
\[d(y_1,y_2)=\min\,\le\{d(y_1,O_{i_1})+d(O_{i_1},O_{i_2})+\cdots+d(O_{i_n},y_2)\,\r\},\]
where the minimum is taken over all possible paths from $y_1$ to $y_2$, through every possible sequence of vertices $O_{i_1},\ldots,O_{i_n}$, connecting $y_1$ to $y_2$.

Now, any point $z$ on the graph $\Gamma$ can be uniquely identified by two coordinates, the horizontal coordinate $x$ and the integer $k$ which denotes the edge $I_k$ the point $z$ belongs to. Notice that if $z$ is one of the interior vertices $O_i$, this second coordinate may not be chosen in a unique way, as there are two or more edges having $O_i$ as their endpoint.

In what follows, we shall denote by $\Pi:G\to  \Gamma$ the identification map of the domain $G$ onto the corresponding graph $\Gamma$.
  For any vertex $O_i$ on the graph $\Gamma$, we denote by $E_i$ the set $\Pi^{-1}(O_i)$ consisting of points $(x,y) \in\,\partial G$ such that $\nu_2(x,y)=0$. The set $E_i$ can be one point, several points or an interval. 
In what follows, we shall assume that $G$ satisfies the following condition:
\begin{enumerate}
\item[IV.] For each vertex $O_i$, either $\nu_1(x,y)>0$, for all $(x,y) \in\,E_i$, or $\nu_1(x,y)<0$, for all $(x,y) \in\,E_i$.
\end{enumerate}

\subsection{A limiting result}

For each $\e>0$ and $z=(x,y) \in\,G$, we consider the stochastic system with reflecting boundary conditions on the domain $G$
\[\left\{\begin{array}{l}
\ds{dX^\e(t)=dB_1(t)+\nu_1(X^\e(t),Y^\e(t))\,d\phi^\e(t),\ \ \ \ X^\e(0)=x,}\\
\vs
\ds{dY^\e(t)=\frac 1\e dB_2(t)+\frac 1{\e^2}\nu_2(X^\e(t),Y^\e(t))\,d\phi^\e(t),\ \ \ \ Y^\e(0)=y.}
\end{array}\r.\]
Such a system can be rewritten as
\begin{equation}
\label{stochastic}
dZ^\e(t)= \sqrt{\si_\e}\,dB(t)+\si_\e\, \nu(Z^\e(t))\,d\phi^\e(t),\ \ \ \ Z^\e(0)=z \in\,G,
\end{equation}
where $\si_\e$ is the matrix defined by
\begin{equation}
\label{sigma}
\si_\e=\le(\ds{\begin{matrix}
1 &  0\\0  &\e^{-2}
\end{matrix}}\r).
\end{equation}
Here $B(t)$ is a  $2$-dimensional standard Brownian motion defined on some stochastic basis  $(\Omega, {\cal F}, \{{\cal F}_t\}_{t\geq 0}, \Pro)$
and $\phi^\e(t)$ is the local time of the process $Z^\e(t)$ on $\partial G$, that is the ${\cal F}_t$-adapted process, continuous with probability $1$, non-decreasing and increasing only when $Z^\e(t) \in\,\partial G$. More precisely, we have the following 
\begin{Definition}
The random pair $(Z^\e(t),\phi^\e(t))$, $t\geq 0$, is a solution of problem \eqref{stochastic} if $Z^\e(t)$ is a $\bar{G}$-valued $\{{\cal F}_t\}_{t\geq 0}$ semi-martingale and $\phi^\e(t)$ is a non-decreasing continuous process, such that
\[Z^\e(t)=z+\sqrt{\si_\e}\,B(t)+\int_0^t\si_\e\, \nu_1(Z^\e(s))\,d\phi^\e(s),\ \ \ \phi^\e(t)=\int_0^tI_{\{Z^\e(s) \in\,\partial G\}}\,d\phi^\e(s).\]

\end{Definition}

In \cite{fw12}, it has been studied the limiting behavior, as $\e\downarrow 0$, of the (non Markov) process 
$\Pi(Z^\e(t))$, $ t \geq 0$, in the space $C([0,T];\Gamma)$, for any fixed $T>0$ and $z \in\,G$.  Namely, it has been shown that the process $\Pi(Z^\e(t))$, which describes the slow motion of the process $Z^\e(t)$,  converges, in the sense of weak convergence of distributions in the space of continuous $\Gamma$-valued functions, to a diffusion process $\bar{Z}$  on $\Gamma$. 

The process $\bar{Z}$ has been described in terms of its generator $\bar{L}$, which is given by suitable differential operators $\bar{{\cal L}}_k$ within each edge $I_k=\{(x,k)\,:\,a_k\leq x\leq b_k\}$ of the graph and by certain gluing conditions at the vertices $O_i$ of the graph. More precisely,
for each $k$, the differential operator $\bar{{\cal L}}_k$ has the form
\begin{equation}
\label{lk}
\bar{{\cal L}}_k f(x)=\frac 1{2l_k(x)}\frac d{dx}\le(l_k\frac{df}{dx}\r)(x),\ \ \ \ a_k<x<b_k,
\end{equation}
and  the operator $\bar{L}$, acting on functions $f$ defined on the graph $\Gamma$, is defined as
\[\bar{L}f(x,k)=\bar{{\cal L}}_k f(x),\ \ \ \text{if}\ (x,k)\ \text{is an interior point of the edge $I_k$}.\]
The domain $D(\bar{L})$ is defined as the set of continuous functions on the graph $\Gamma$, that are twice continuously differentiable in the interior part of each edge of the graph, such that for any vertex $O_i=(x_i,k_1)=\cdots=(x_i,k_{N_i})$ there exist finite
\[ \lim_{(x,k_j)\to O_i}\bar{L} f(x,k_j),\]
the following one-sided limits exist
\[\lim_{x\to x_i} l_k(x)\frac{df}{dx}(x,k_j),\]
along any edge $I_{k_j}$ ending at the vertex $O_i=(x_i,k_j)$ and 
the following gluing condition is satisfied
\begin{equation}
\label{gluing}
\sum_{k=1}^{N_i}\le(\pm\lim_{x\to x_i}l_k(x)\frac{df}{dx}(x,k)\r)=0,\end{equation}
where the sign $+$ is taken for right limits and the sign $-$ for left limits.
In the case of an exterior vertex $O_i$, the gluing condition \eqref{gluing} reduces to
\begin{equation}
\label{gluingbis}
\lim_{x\to x_i}l_k(x)\frac{df}{dx}(x,k)=0,
\end{equation}
along the only edge $I_k$ terminating in $O_i$.

In \cite[Theorem 1.1]{fw12} it has been proven that for any domain $G$ satisfying properties I, II and III, there exists a continuous Markov process $\bar{Z}(t)$, $t\geq 0$, on the graph $\Gamma$ having $\bar{L}$ as its generator. In what follows we shall denote by $\bar{\Pro}_{(x,k)}$ and $\bar{\E}_{(x,k)}$ the probability and the expectation associated to the process $\bar{Z}(t)$, starting from the point $(x,k) \in\,\Gamma$. Moreover, we shall denote by $\bar{S}(t)$, $t\geq 0$, the transition semigroup associated with $\bar{Z}(t)$, defined by
\[\bar{S}(t)f(x,k)=\bar{\E}_{(x,k)}f(\bar{Z}(t)),\ \ \ \ t\geq 0,\ \ \ \ (x,k) \in\,\Gamma,\]
for any $f:\Gamma\to\reals$ Borel and bounded.

As we mentioned above, in \cite[Theorem 1.2]{fw12} it has also been proven that the process $\Pi(Z^\e)$ is weakly convergent to $\bar{Z}$ in $C([0,T];\Gamma)$, for any $T>0$ and $z \in\,G$. Namely, for any bounded and continuous functional $F$ on $C([0,T];\Gamma)$ and $z \in\,G$ it holds
\begin{equation}
\label{limite}
\lim_{\e\to 0}\, \E_z F(\Pi(Z^\e(\cdot)))=\bar{\E}_{\Pi(z)}F(\bar{Z}(\cdot)).\end{equation}

\section{Functions and operators on the graph $\Gamma$}
\label{sec2}

In what follows, for every $\e>0$  we denote $H_\e:=L^2(G_\e)$. In the special case $\e=1$, we  denote $H_1=:H$.
Moreover, we shall denote by $\bar{H}$ the space of measurable functions $f:\Gamma\to\reals$ such that
\[\sum_{k=1}^N\int_{I_k}|f(x,k)|^2l_k(x)\,dx<+\infty,\]
(here $N$ is the total number of edges in the graph $\Gamma$).
The space  $\bar{H}$ turns out to be a Hilbert space, endowed with the scalar product
\[\le<f,g\r>_{\bar{H}}=\sum_{k=1}^N\int_{I_k}f(x,k) g(x,k)l_k(x)\,dx.\]
Notice that, if we denote by $\nu$ the measure on $\Gamma$ defined by 
\begin{equation}
\label{cf1003}
\nu(A)=\sum_{k=1}^N\int_{I_k\cap A}f(x,k)l_k(x)\,dx,\ \ \ A \in\,\mathcal{B}(\Gamma),
\end{equation}
we have that $\bar{H}=L^2(\Gamma,\nu)$. 

Now, for any $u \in\,H$ we define
\begin{equation}
\label{bar}
u^{\wedge}(x,k)=\frac 1{l_k(x)}\int_{C_k(x)}u(x,y)\,dy,\ \ \ \ (x,k) \in\,\Gamma,\end{equation}
and for any $f \in\,\bar{H}$ we define
\[f^{\vee}(x,y)=f(\Pi(x,y)),\ \ \ (x,y) \in\,G.\]
For any $f \in\,\bar{H}$ and $u \in\,H$ we have
\begin{equation}
\label{cf50}
\langle u^\wedge, f\rangle_{\bar{H}}=\langle u,f^\vee\rangle_H,
\end{equation}
as
\[\langle u^\wedge, f\rangle_{\bar{H}}=\sum_{k=1}^\infty \int_{I_k}\frac{1}{l_k(x)}\int_{C_k(x)}u(x,y)\,dyf(x,k) l_k(x)\,dx=\sum_{k=1}^\infty \int_{I_k}\int_{C_k(x)}u(x,y) f^\vee(x,y)\,dy\,dx.\]
Moreover, for any $f \in\,\bar{H}$  we have
\begin{equation}
\label{cf46}
(f^{\vee})^\wedge=f.\end{equation}
Actually, for any $(x,k) \in\,\Gamma$ we have
\[\begin{array}{l}
\ds{(f^{\vee})^\wedge(x,k)=\frac 1{l_k(x)}\int_{C_k(x)}f^{\vee}(x,y)\,dy}\\
\vs
\ds{=\frac 1{l_k(x)}\int_{C_k(x)}f(\Pi(x,y))\,dy=\frac 1{l_k(x)}\int_{C_k(x)}f(x,k)\,dy=f(x,k).}
\end{array}\]
In particular, from \eqref{cf50} and \eqref{cf46}, we get that for any $f, g \in\,\bar{H}$
\begin{equation}
\label{cf2010}
\langle f^\vee,g^\vee\rangle_H=\langle (f^\vee)^\wedge,g\rangle_{\bar{H}}=\langle f,g\rangle_{\bar{H}}.
\end{equation}
This implies the following result. 
\begin{Lemma}
\label{l2}
If $\{f_n\}_{n \in\,\nat}$ is an orthonormal system in $\bar{H}$, then
the family of functions $\{f^{\vee}_n\}_{n \in\,\nat}$ is an orthonormal system  in $H$.
\end{Lemma}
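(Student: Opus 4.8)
The plan is to verify the two defining properties of an orthonormal system directly from the identities established just above the statement. Recall that by \eqref{cf2010}, for any $f,g \in\,\bar{H}$ we have $\langle f^\vee, g^\vee\rangle_H = \langle f, g\rangle_{\bar{H}}$; this identity is the entire content needed.

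First I would apply \eqref{cf2010} with $f = f_n$ and $g = f_m$. Since $\{f_n\}_{n \in\,\nat}$ is an orthonormal system in $\bar{H}$, we have $\langle f_n, f_m\rangle_{\bar{H}} = \delta_{nm}$. Hence
\[
\langle f_n^\vee, f_m^\vee\rangle_H = \langle f_n, f_m\rangle_{\bar{H}} = \delta_{nm},
\]
for all $n,m \in\,\nat$. In particular, taking $n = m$ gives $|f_n^\vee|_H = 1$, so each $f_n^\vee$ is a unit vector in $H$, and taking $n \neq m$ gives $\langle f_n^\vee, f_m^\vee\rangle_H = 0$, so the vectors are pairwise orthogonal. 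This is exactly the assertion that $\{f_n^\vee\}_{n \in\,\nat}$ is an orthonormal system in $H$.

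There is essentially no obstacle here: the lemma is an immediate corollary of \eqref{cf2010}, which in turn rests on \eqref{cf50} and \eqref{cf46}. The only point worth a remark is that orthonormality is claimed, not completeness — that is, $\{f_n^\vee\}$ need not be an orthonormal \emph{basis} of $H$ even if $\{f_n\}$ is a basis of $\bar{H}$, since the map $f \mapsto f^\vee$ embeds $\bar{H}$ isometrically onto the closed subspace of $H$ consisting of functions that are constant on each fibre $C_k(x)$ of $\Pi$, which is a proper subspace. So no completeness argument is needed or available, and the proof is complete once the computation above is written down.
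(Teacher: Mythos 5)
Your proof is correct and follows exactly the route the paper intends: the lemma is stated immediately after identity \eqref{cf2010}, which the authors invoke with the words ``This implies the following result,'' and your computation $\langle f_n^\vee, f_m^\vee\rangle_H = \langle f_n, f_m\rangle_{\bar{H}} = \delta_{nm}$ is precisely that implication spelled out. Your closing remark about orthonormality versus completeness is accurate and consistent with the paper's later use of $K_1 = \overline{\span}\{f_n^\vee\}$ and its orthogonal complement $K_2$.
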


Moreover, we have the following result.

\begin{Lemma}
\label{l1bis}
The mapping $f \in\,\bar{H}\mapsto f^{\vee} \in\,H$ is an isometry and the mapping
$u \in\,H\mapsto u^{\wedge} \in\,\bar{H}$ is a contraction.
\end{Lemma}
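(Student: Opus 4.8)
The plan is to prove the two assertions of Lemma~\ref{l1bis} directly from the identities established just before the statement, since almost all the work has already been done. For the first assertion, that $f\mapsto f^\vee$ is an isometry from $\bar H$ into $H$, I would simply invoke \eqref{cf2010} with $g=f$: this gives $|f^\vee|_H^2=\langle f^\vee,f^\vee\rangle_H=\langle f,f\rangle_{\bar H}=|f|_{\bar H}^2$, so the map preserves norms. Linearity of $f\mapsto f^\vee$ is immediate from the pointwise definition $f^\vee=f\circ\Pi$, and a norm-preserving linear map is automatically injective, so it is an isometric embedding of $\bar H$ into $H$. (It is of course not surjective, since $f^\vee$ is constant on each cross-section $C_k(x)$, but the statement only claims it is an isometry onto its image.)

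For the second assertion, that $u\mapsto u^\wedge$ is a contraction from $H$ to $\bar H$, the cleanest route is to combine the adjoint relation \eqref{cf50} with the Cauchy–Schwarz inequality in $\bar H$ and the isometry just proved. Concretely, for $u\in H$ and any $f\in\bar H$,
\[
\langle u^\wedge,f\rangle_{\bar H}=\langle u,f^\vee\rangle_H\le |u|_H\,|f^\vee|_H=|u|_H\,|f|_{\bar H},
\]
using \eqref{cf2010} in the last step. Taking the supremum over $f\in\bar H$ with $|f|_{\bar H}\le 1$ — equivalently, choosing $f=u^\wedge/|u^\wedge|_{\bar H}$ when $u^\wedge\neq 0$ — yields $|u^\wedge|_{\bar H}\le|u|_H$, which is exactly the contraction property. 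One should note in passing that $u^\wedge$ genuinely lies in $\bar H$: this follows from the same estimate, or more directly by applying Jensen's inequality on each interval $C_k(x)$, namely $|u^\wedge(x,k)|^2=\bigl|\tfrac1{l_k(x)}\int_{C_k(x)}u(x,y)\,dy\bigr|^2\le\tfrac1{l_k(x)}\int_{C_k(x)}|u(x,y)|^2\,dy$, and then integrating the weight $l_k(x)$ against this, which recovers $|u^\wedge|_{\bar H}^2\le|u|_H^2$ after summing over $k$.

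There is no real obstacle here: the lemma is a formal consequence of \eqref{cf50}, \eqref{cf46} and \eqref{cf2010}, all of which are already in hand. The only point requiring a word of care is the duality argument for the contraction bound, where one must make sure that $f^\vee$ ranges over a norming set for $H$ — but since we only need an upper bound on $|u^\wedge|_{\bar H}$ and we are pairing against $f^\vee$ with $f\in\bar H$, the relation $|f^\vee|_H=|f|_{\bar H}$ is all that is needed, and testing against the single function $f=u^\wedge$ suffices. Alternatively one avoids duality altogether via the Jensen estimate above, which is perhaps the more transparent proof to record.
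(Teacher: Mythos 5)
Your proof is correct and essentially the one in the paper: the isometry half is the same one-line appeal to \eqref{cf2010}, and your Jensen/H\"older estimate for $|u^\wedge|_{\bar H}\le |u|_H$ is exactly the computation the paper records. The duality variant via \eqref{cf50} and Cauchy--Schwarz that you lead with is also valid, but it is only a cosmetic repackaging of the same inequality.
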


\begin{proof}
Due to \eqref{cf2010},  we have 
\[|f^\vee|_H^2=\langle f^\vee, f^\vee\rangle_H=\langle f, f\rangle_{\bar{H}}=|f|^2_{\bar{H}}.\]
Moreover, as a consequence of the H\"older inequality,  we have
\[\begin{array}{l}
\ds{|u^{\wedge}|_{\bar{H}}^2=\sum_{k=1}^N\int_{I_k}\le|\frac 1{l_k(x)}\int_{C_k(x)}u(x,y)\,dy\r|^2l_k(x)\,dx}\\
\vs
\ds{\leq \sum_{k=1}^N\int_{I_k}\frac 1{l_k(x)}\int_{C_k(x)}|u(x,y)|^2\,dyl_k(x)\,dx=\sum_{k=1}^N\int_{I_k}\int_{C_k(x)}|u(x,y)|^2\,dy\,dx=|u|^2_H.}
\end{array}\]

\end{proof}

\begin{Remark}{\em If $f \in\,C(\bar{\Gamma})$, then clearly $f^\vee \in\,C(\bar{G})$. On the other hand, if $\varphi \in\,C(\bar{G})$, it is not true, in general, that $\varphi^\wedge  \in\,C(\bar{\Gamma})$. Actually, $\varphi^\wedge$ may fail  to be continuous in correspondence of the interior vertices.}
\end{Remark}

Now, let $\{f_n\}_{n \in\,\nat}$ be a complete orthonormal system in $\bar{H}$. In what follows, we will denote by $K_1:=\langle f^{\vee}_n\rangle_{n \in\,\nat}$ and by $K_2:=K_1^\perp$, so that $H=K_1\oplus K_2$. This means that any $u \in\,H$ can be written as $u_1+u_2$, with $u_i \in\,K_i$, for $i=1,2$.
\begin{Lemma}
\label{l4}
We have
\begin{equation}
\label{cf51}
u \in\,K_2\Longleftrightarrow u^{\wedge}=0.\end{equation}
Moreover,
\begin{equation}
\label{cf52}
u \in\,K_1\Longleftrightarrow (u^{\wedge})^\vee=u.\end{equation}
\end{Lemma}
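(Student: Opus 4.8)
The plan is to dispose of the two equivalences separately, obtaining \eqref{cf51} directly from the duality relation \eqref{cf50}, and then deducing \eqref{cf52} by recognizing $u\mapsto(u^{\wedge})^{\vee}$ as an orthogonal projection whose kernel is exactly $K_2$.

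For \eqref{cf51} I would argue as follows. By \eqref{cf50}, for every $u\in H$ and every $n$ one has $\langle u,f_n^{\vee}\rangle_H=\langle u^{\wedge},f_n\rangle_{\bar H}$. Hence $u\in K_2=K_1^{\perp}$ precisely when $u^{\wedge}$ is orthogonal in $\bar H$ to every $f_n$, and since $\{f_n\}_{n\in\nat}$ is a \emph{complete} orthonormal system in $\bar H$, this holds if and only if $u^{\wedge}=0$. This is the easy half.

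For \eqref{cf52} I would introduce the operator $Pu:=(u^{\wedge})^{\vee}$ on $H$. By Lemma \ref{l1bis} it is linear and bounded (the composition of the contraction $u\mapsto u^{\wedge}$ with the isometry $f\mapsto f^{\vee}$, so $\|Pu\|_H\le\|u\|_H$). I would then check that $P$ is an orthogonal projection: idempotency is \eqref{cf46} applied to $f=u^{\wedge}$, which gives $((u^{\wedge})^{\vee})^{\wedge}=u^{\wedge}$ and hence $P^2=P$; self-adjointness follows from \eqref{cf50}, which yields $\langle Pu,v\rangle_H=\langle u^{\wedge},v^{\wedge}\rangle_{\bar H}$, an expression symmetric in $u$ and $v$. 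Consequently $P$ is the orthogonal projection onto its range $\mathrm{Ran}(P)=\{u\in H:(u^{\wedge})^{\vee}=u\}$, which is exactly the set on the right-hand side of \eqref{cf52}. It then remains to identify this range with $K_1$: since $\vee$ is an isometry, hence injective, $Pu=0$ is equivalent to $u^{\wedge}=0$, which by the already proven \eqref{cf51} means $u\in K_2=K_1^{\perp}$; thus $\ker P=K_1^{\perp}$, and because $P$ is an orthogonal projection and $K_1$ is closed,
\[\mathrm{Ran}(P)=(\ker P)^{\perp}=(K_1^{\perp})^{\perp}=K_1,\]
which gives \eqref{cf52}.

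I do not expect any serious obstacle. The only points that need a little care are the verification that $P$ is genuinely \emph{self-adjoint} (so that it is an orthogonal projection, not merely an idempotent), for which the pairing \eqref{cf50} is the right tool, and the implicit fact that $K_1$ is closed — needed both for the orthogonal decomposition $H=K_1\oplus K_2$ to make sense and for the identity $(K_1^{\perp})^{\perp}=K_1$. An alternative, slightly more hands-on proof of \eqref{cf52} would expand $u=\sum_n c_n f_n^{\vee}$ for $u\in K_1$, apply the continuous maps $\wedge$ and $\vee$ termwise using \eqref{cf46}, and conversely use that $g^{\vee}=\sum_n\langle g,f_n\rangle_{\bar H}f_n^{\vee}\in K_1$ by the isometry property; but the projection argument above is cleaner and reuses \eqref{cf51} directly.
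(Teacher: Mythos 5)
Your proof is correct. The first equivalence \eqref{cf51} is argued exactly as in the paper: pair $u$ against $f_n^\vee$, move the pairing to $\bar{H}$ via \eqref{cf50}, and invoke completeness of $\{f_n\}_{n \in\,\nat}$. For \eqref{cf52}, however, you take a genuinely different route. The paper expands $u \in\,K_1$ in the orthonormal system $\{f_j^\vee\}_{j \in\,\nat}$, applies $\wedge$ and then $\vee$ termwise using \eqref{cf46} (and, implicitly, the continuity of both maps from Lemma \ref{l1bis}), and thereby verifies explicitly only the implication $u \in\,K_1 \Rightarrow (u^\wedge)^\vee=u$; the converse is left implicit, resting on the easy observation that $(u^\wedge)^\vee$ always lies in $K_1$. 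You instead observe that $Pu:=(u^\wedge)^\vee$ is a bounded idempotent (by \eqref{cf46}) which is self-adjoint because $\langle Pu,v\rangle_H=\langle u^\wedge,v^\wedge\rangle_{\bar{H}}$ by \eqref{cf50}, hence an orthogonal projection whose range is its fixed-point set; identifying $\ker P=K_2$ via injectivity of $\vee$ and the already proven \eqref{cf51} then gives $\mathrm{Ran}(P)=(K_1^\perp)^\perp=K_1$, so both directions of \eqref{cf52} come out at once. The paper's computation is shorter and more concrete; your argument is slightly more structural, delivers the reverse implication without extra work, and usefully isolates the two facts the statement really depends on, namely self-adjointness of $P$ (via \eqref{cf50}) and closedness of $K_1$, which the paper's decomposition $H=K_1\oplus K_2$ assumes tacitly.
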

\begin{proof}
Thanks to \eqref{cf50},
for any $n \in\,\nat$ we have
\[\begin{array}{l}
\ds{\langle u^{\wedge},f_n\rangle_{\bar{H}} =\langle u,f^{\vee}_n\rangle_H.}
\end{array}\]
Since $\{f_n\}_{n  \in\,\nat}$ is a complete orthonormal system in $\bar{H}$, this implies \eqref{cf51}.

Next, if $u \in\,K_1$, then due to \eqref{cf46}
\[u^\wedge=\sum_{j=1}^\infty \langle u,f_j^\vee\rangle_H (f_j^\vee)^\wedge=\sum_{j=1}^\infty \langle u,f_j^\vee\rangle_H f_j.\]
Therefore, we get
\[(u^\wedge)^\vee=\sum_{j=1}^\infty \langle u,f_j^\vee\rangle_H f_j^\vee=u.\]
\end{proof}

\medskip

Now, for any $Q \in\,{\cal L}( H)$ and $f \in\,\bar{H}$, we define
\begin{equation}
\label{cf35}
Q^\wedge f=(Q f^\vee)^\wedge.\end{equation}
Due to Lemma \ref{l1bis}, it is immediate to check that $Q^\wedge \in\,{\cal L}(\bar{H})$ and
\[\|Q^\wedge \|_{{\cal L}(\bar{H})}\leq \|Q\|_{{\cal L}(H)}.\] Moreover, thanks again to Lemma \ref{l1bis}, if $\{f_n\}_{n \in\,\nat}$ is a complete orthonormal system in $\bar{H}$, we have
\[\sum_{n=1}^\infty|Q^\wedge f_n|_{\bar{H}}^2=\sum_{n=1}^\infty|(Q f_n^\vee)^\wedge|_{\bar{H}}^2\leq \sum_{n=1}^\infty|Q f_n^\vee|_{H}^2,\]
so that, 
 thanks to Lemma \ref{l2}, if $Q \in\,{\cal L}_2(H)$ we get $Q^\wedge \in\,{\cal L}_2(\bar{H})$ and
\[\|Q^\wedge \|_{{\cal L}_2(\bar{H})}\leq \|Q\|_{{\cal L}_2(H)}.\]
Thus, we have proven the following property.
\begin{Lemma}
\label{l2.3}
If $Q \in\,{\cal L}_2(H)$, then $Q^\wedge \in\, {\cal L}_2(\bar{H})$.
\end{Lemma}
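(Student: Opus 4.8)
The plan is to compute the Hilbert--Schmidt norm of $Q^\wedge$ directly from its definition \eqref{cf35} and to reduce the estimate to the corresponding norm of $Q$ via the isometry/contraction properties established in Lemma \ref{l1bis} and the orthogonality statement in Lemma \ref{l2}. Recall that for a bounded operator $T$ between Hilbert spaces, $\|T\|_{{\cal L}_2}^2=\sum_n |Te_n|^2$ for \emph{any} complete orthonormal system $\{e_n\}$, and moreover $\sum_n |Te_n|^2\leq \|T\|_{{\cal L}_2}^2$ for \emph{any} orthonormal family $\{e_n\}$, not necessarily complete. So I would start by fixing a complete orthonormal system $\{f_n\}_{n\in\nat}$ in $\bar H$ and writing
\[
\|Q^\wedge\|_{{\cal L}_2(\bar H)}^2=\sum_{n=1}^\infty |Q^\wedge f_n|_{\bar H}^2=\sum_{n=1}^\infty |(Q f_n^\vee)^\wedge|_{\bar H}^2 .
\]

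Next I would apply the contraction property of $u\mapsto u^\wedge$ from Lemma \ref{l1bis} to each term, obtaining $|(Q f_n^\vee)^\wedge|_{\bar H}\leq |Q f_n^\vee|_{H}$, hence
\[
\|Q^\wedge\|_{{\cal L}_2(\bar H)}^2\leq \sum_{n=1}^\infty |Q f_n^\vee|_{H}^2 .
\]
Now the key point: by Lemma \ref{l2}, the family $\{f_n^\vee\}_{n\in\nat}$ is an orthonormal system in $H$ (it need not be complete, but that does not matter). Therefore the right-hand side is bounded by $\|Q\|_{{\cal L}_2(H)}^2$, which is finite by hypothesis, and we conclude $Q^\wedge\in{\cal L}_2(\bar H)$ with $\|Q^\wedge\|_{{\cal L}_2(\bar H)}\leq \|Q\|_{{\cal L}_2(H)}$.

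The only mildly delicate step is the last bound $\sum_n |Q f_n^\vee|_{H}^2\leq \|Q\|_{{\cal L}_2(H)}^2$ for the possibly incomplete orthonormal system $\{f_n^\vee\}$; I would justify it either by completing $\{f_n^\vee\}$ to a full orthonormal basis of $H$ and discarding the extra (nonnegative) terms, or by the dual computation $\sum_n |Q f_n^\vee|_H^2=\sum_n\sum_m |\langle f_n^\vee, Q^\star g_m\rangle_H|^2=\sum_m \sum_n |\langle f_n^\vee, Q^\star g_m\rangle_H|^2\leq \sum_m |Q^\star g_m|_H^2=\|Q\|_{{\cal L}_2(H)}^2$, with $\{g_m\}$ a complete orthonormal system in $H$, using Bessel's inequality in the inner sum. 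Everything else is a routine rearrangement, and in fact this is exactly the chain of inequalities already displayed before the statement, so no new ideas beyond Lemmas \ref{l2} and \ref{l1bis} are needed.
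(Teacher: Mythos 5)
Your proposal is correct and follows exactly the same chain of inequalities as the paper's own argument (the displayed computation preceding the lemma), namely applying the contraction property of $u\mapsto u^\wedge$ from Lemma \ref{l1bis} termwise and then using Lemma \ref{l2} to bound the resulting sum over the orthonormal family $\{f_n^\vee\}$ by $\|Q\|_{{\cal L}_2(H)}^2$. Your extra justification of the final bound for a possibly incomplete orthonormal system is a standard point the paper leaves implicit, but it is the same proof.
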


Next, for any $A \in\,{\cal L}(\bar{H})$ and $u \in\,H$ we define
\begin{equation}
\label{cf36}
A^\vee u=(A u^\wedge )^\vee.\end{equation}
Due to Lemma \eqref{l1bis}, we have that $A^\vee \in\,{\cal L}(H)$ and 
\[\|A^\vee\|_{{\cal L}(H)}\leq \|A\|_{{\cal L}(\bar{H})}.\]

Moreover, we have
\begin{equation}
\label{cf47}
(A^\vee)^\wedge =A.
\end{equation}
Actually, due to \eqref{cf46}, for any $f \in\,\bar{H}$ we have
\[(A^\vee)^\wedge f=(A^\vee f^\vee)^\wedge =((A(f^\vee)^\wedge)^\vee)^\wedge=Af,\]
so that \eqref{cf47} follows.
Notice that  in general $(Q^\wedge)^\vee\neq Q$, for $Q \in\,{\cal L}(H)$, as in general $(u^\wedge)^\vee\neq u$, for $u \in\,H$. Actually, as a consequence of Lemma \ref{l4}, we have 
\[(Q^\wedge)^\vee= Q \Longleftrightarrow \text{ Ker}\, Q\subseteq K_2,\ \ \ \text{ Im}\, Q\subseteq K_1.\]

\section{An approximation result}
\label{app1}

We assume  here that the domain $G$ has the special form
\[G=\le\{(x,y) \in\,\reals^2\,:\,h_1(x)\leq y\leq h_2(x),\ x \in\,\reals\r\},\]
for some functions $h_1, h_2 \in\,C^3_b(\reals)$, such that 
\begin{equation}
\label{G}
h_2(x)-h_1(x)=:l(x)\geq l_0>0,\ \ \ \ x \in\,\reals.
\end{equation}
In this case we have
\[\partial G=\le\{(x,h_1(x))\,:\,x \in\,\reals\r\}\cup \le\{(x,h_2(x))\,:\,x \in\,\reals\r\},\]
and,  for any $x \in\,\reals$,
\[\nu(x,h_i(x))=(1+|h_i^\prime(x)|^2)^{-\frac 12}((-1)^i h_i^\prime(x),(-1)^{i+1}),\ \ \ \ i=1,2.\]
The corresponding graph $\Gamma$ consists of just one edge $I_1=\reals$ and the projected process $\Pi(Z^\e(t))$ is $(X^\e(t),1)$. Moreover, the limiting process $\bar{Z}(t)$, described in Section \ref{sec1},  is the solution of the stochastic equation
\[d\bar{Z}(t)=\frac 12 \frac{l^\prime(\bar{Z}(t))}{l(\bar{Z}(t))}\,dt+dB(t),\ \ \ \ \bar{Z}(0)=x.\]

\begin{Lemma}
There exists $\e_0>0$  such that for any $\e\leq \e_0$, $z \in\,G$  and $0\leq r<t$  
\begin{equation}
\label{phi}
\phi^\e(t)-\phi^\e({r})\leq c\,\e^2(1+(t-r))+\e^2\,\le|\int_{r}^tF^{\e,z}_1(s)\,dB_1(s)\r|+\e\le|\int_{r}^tF^{\e,z}_2(s)\,dB_2(s)\r|+c\,(t-r),
\end{equation}
where $F^{\e,z}_1(t)$ and $F^{\e,z}_2(t)$ are two adapted  processes  such that 
\[\sup_{\e>0,\ z \in\,G,\ t\geq 0}\le(|F^{\e,z}_1(t)|+|F^{\e,z}_1(t)|\r)=:M<\infty,\ \ \ \ \mathbb{P}-\text{a.s}.\]
\end{Lemma}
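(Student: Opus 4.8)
The plan is to obtain the estimate by applying Itô's formula to a carefully chosen function of the process $Z^\e(t)$ whose ``boundary contribution'' reproduces the local time $\phi^\e$ with a controllable multiplicative constant. Concretely, since $\partial G$ is given by the two graphs $y=h_i(x)$ with $h_i\in C^3_b(\reals)$, I would introduce a smooth function $\psi:\bar G\to\reals$ that behaves like a signed distance to $\partial G$ near the boundary; a convenient choice is $\psi(x,y)=(y-h_1(x))(h_2(x)-y)$, which vanishes exactly on $\partial G$, is strictly positive in $G$, and whose gradient on each boundary component is a strictly positive multiple of the inward conormal direction associated with $\si_\e$ after rescaling. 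More precisely, on $y=h_1(x)$ we have $\nabla\psi=(\,-h_1'(x)\,l(x),\,l(x)\,)$ and on $y=h_2(x)$ we have $\nabla\psi=(\,-h_2'(x)\,l(x),\,-l(x)\,)$, so that $\langle \si_\e\,\nu(Z^\e),\nabla\psi(Z^\e)\rangle$ equals, on each face, $\e^{-2}l(x)(1+|h_i'(x)|^2)^{-1/2}$ up to an $O(1)$ factor; crucially this quantity is bounded below by $c^{-1}\e^{-2}l_0>0$ and bounded above by $c\,\e^{-2}$, using \eqref{G} and $h_i\in C^3_b$.

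Next I would write Itô's formula for $\psi(Z^\e(t))$ between $r$ and $t$. Using \eqref{stochastic},
\[
\psi(Z^\e(t))-\psi(Z^\e(r))=\int_r^t\langle\nabla\psi(Z^\e(s)),\sqrt{\si_\e}\,dB(s)\rangle+\frac12\int_r^t\Tr\!\big(\si_\e\,D^2\psi(Z^\e(s))\big)\,ds+\int_r^t\langle\nabla\psi(Z^\e(s)),\si_\e\,\nu(Z^\e(s))\rangle\,d\phi^\e(s).
\]
The third term is the key one: on $\{Z^\e(s)\in\partial G\}$ the integrand is bounded below by $c^{-1}\e^{-2}l_0$, so
\[
\int_r^t\langle\nabla\psi(Z^\e(s)),\si_\e\,\nu(Z^\e(s))\rangle\,d\phi^\e(s)\ \geq\ c^{-1}\e^{-2}l_0\,\big(\phi^\e(t)-\phi^\e(r)\big).
\]
Rearranging gives $\phi^\e(t)-\phi^\e(r)$ bounded by $c\,\e^2$ times the sum of: (i) $|\psi(Z^\e(t))-\psi(Z^\e(r))|$, which is bounded by $2\|\psi\|_{C(\bar G)}\le \cost$ since $G$ is bounded and $\psi$ continuous (this produces the $c\,\e^2$ term with no time factor, or one may absorb it into $c\,\e^2(1+(t-r))$); (ii) the second-order term, whose integrand $\frac12\Tr(\si_\e D^2\psi)=\frac12(\partial_{xx}\psi+\e^{-2}\partial_{yy}\psi)$ is bounded by $\cost\,\e^{-2}$ because $D^2\psi$ is bounded on $\bar G$ (again $h_i\in C^3_b$), giving a contribution $\leq c\,\e^2\cdot\e^{-2}(t-r)=c\,(t-r)$; (iii) the martingale term $\int_r^t\langle\nabla\psi(Z^\e(s)),\sqrt{\si_\e}\,dB(s)\rangle=\int_r^t\partial_x\psi(Z^\e(s))\,dB_1(s)+\e^{-1}\int_r^t\partial_y\psi(Z^\e(s))\,dB_2(s)$, which after multiplying by the overall $c\,\e^2$ yields exactly the two stochastic-integral terms in \eqref{phi}, with $F^{\e,z}_1(s)=\cost\cdot\partial_x\psi(Z^\e(s))$ and $F^{\e,z}_2(s)=\cost\cdot\partial_y\psi(Z^\e(s))$; these are bounded uniformly in $\e,z,s$ and $\omega$ because $\nabla\psi$ is continuous on the compact set $\bar G$ and $Z^\e(s)\in\bar G$ almost surely.

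Collecting the four contributions reproduces \eqref{phi}, with the stated uniform bound $M$ on $|F^{\e,z}_1|+|F^{\e,z}_2|$, and $\e_0$ only needs to be chosen so that the relation \eqref{normal}–\eqref{boundarycond} and the lower bound on the conormal-gradient pairing hold (in the present explicit geometry one can in fact take any $\e_0$). I expect the only genuinely delicate point to be the justification of the lower bound $\langle\nabla\psi(z),\si_\e\,\nu(z)\rangle\geq c^{-1}\e^{-2}l_0$ for all $z\in\partial G$: one must check on each face $y=h_i(x)$ that $\nu(x,h_i(x))$ and $\nabla\psi(x,h_i(x))$ are positively proportional and compute the proportionality constant, using the explicit formula $\nu(x,h_i(x))=(1+|h_i'(x)|^2)^{-1/2}((-1)^ih_i'(x),(-1)^{i+1})$ quoted above, so that $\langle\nabla\psi,\si_\e\nu\rangle=\e^{-2}\,l(x)\,(1+|h_i'(x)|^2)^{-1/2}+O(1)$, bounded below by $c^{-1}\e^{-2}l_0$ for $\e$ small using \eqref{G} and boundedness of $h_i'$. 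A harmless technical wrinkle is that $\psi$ as defined is only a ``distance-like'' function up to the $c$-factor of \eqref{normal}; if one prefers to work directly with $Z^\e(t)$ on $\partial G$ (not on $\partial G_\e$) the pairing $\langle\si_\e\nu,\nabla\psi\rangle$ is exactly what appears in \eqref{stochastic}, so no rescaling is needed and the computation is as above. Everything else is routine.
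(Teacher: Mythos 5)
Your proposal is correct and follows essentially the same strategy as the paper: apply It\^o's formula to an auxiliary $C^2_b$ function of $Z^\e$ whose $\si_\e$-conormal derivative on $\partial G$ is bounded below by a positive multiple of $\e^{-2}$, and then solve the resulting identity for the local-time increment. The paper takes that function to be the solution $u$ of the auxiliary Neumann problem \eqref{u}, so that the boundary pairing equals $\e^{-2}\,d\phi^\e$ exactly (at the price of an extra $\e^2\,\partial_x u\,\nu_1\,d\phi^\e$ term that must be absorbed into the left-hand side, which is where $\e_0$ enters), whereas your explicit $\psi=(y-h_1)(h_2-y)$ yields only a lower bound on the pairing but needs no absorption; apart from a harmless sign slip in $\partial_x\psi$ on $y=h_2(x)$ (it should be $+h_2'(x)\,l(x)$, which in fact makes the cross term nonnegative), everything checks out.
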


\begin{proof}
By proceeding as in \cite[Section 3]{fw12}, we denote by $u(x,y)$  the solution of the problem
\begin{equation}
\label{u}
\le\{\begin{array}{l}
\ds{\frac{\partial^2u}{\partial y^2}(x,y)=-\frac 1{l(x)}\le(\sqrt{1+|h_1^\prime(x)|^2}+\sqrt{1+|h_2^\prime(x)|^2}\r),\ \ \ \ (x,y) \in\,G,}\\
\vs
\ds{\frac{\partial u}{\partial y}(x,h_1(x))=\sqrt{1+|h_1^\prime(x)|^2},\ \ \ \ \frac{\partial u}{\partial y}(x,h_2(x))=-\sqrt{1+|h_2^\prime(x)|^2},}\\
\vs
\ds{u(x,h_1(x))=0,\ \ \ \ x \in\,\reals.}
\end{array}\r.\end{equation}
It is easy to compute explicitly $u$ and it turns out that
$u \in\,C^2_b(G)$. As a consequence of the It\^o formula, we have
\[\begin{array}{l}
\ds{u(Z^\e(t))-u(Z^\e(r))=\int_{r}^t\frac{\partial u}{\partial x}(Z^\e(s))dB_1(s)+\frac 1{\e}\int_{r}^t\frac{\partial u}{\partial y}(Z^\e(s))dB_2(s)}\\
\vs
\ds{+\int_{r}^t\le[\frac{\partial u}{\partial x}(Z^\e(s))\nu_1(Z^\e(s))+\frac 1{\e^2}\frac{\partial u}{\partial y}(Z^\e(s))\nu_2(Z^\e(s))\r]\,d\phi^\e(s)}\\
\vs
\ds{+\frac 12 \int_{r}^t\le[\frac{\partial^2 u}{\partial x^2}(Z^\e(s))+\frac 1{\e^2}\frac{\partial^2 u}{\partial y^2}(Z^\e(s))\r]\,ds,}
\end{array}\]
so that, thanks to \eqref{u}, we obtain
\[\begin{array}{l}
\ds{u(Z^\e(t))-u(Z^\e(r)=\int_{r}^t\frac{\partial u}{\partial x}(Z^\e(s))dB_1(s)+\frac 1{\e}\int_{r}^t\frac{\partial u}{\partial y}(Z^\e(s))dB_2(s)}\\
\vs
\ds{+\int_{r}^t\frac{\partial u}{\partial x}(Z^\e(s))\nu_1(Z^\e(s))\,d\phi^\e(s)+\frac 12 \int_{r}^t\frac{\partial^2 u}{\partial x^2}(Z^\e(s))\,ds-\frac 1{\e^2}\int_{r}^t\a(X^\e(s))\,ds}\\
\vs
\ds{+\frac 1{\e^2}\le(\phi^\e(t)-\phi^\e({r})\r),}
\end{array}\]
where
\[\a(x)=\frac 1{l(x)}\le(\sqrt{1+|h_1^\prime(x)|^2}+\sqrt{1+|h_2^\prime(x)|^2}\r).\]
This implies
\[\begin{array}{l}
\ds{\phi^\e(t)-\phi^\e({r})=\e^2\le(u(Z^\e(t))-u(Z^\e({r}))\r)}\\
\vs
\ds{-\e^2\int_{r}^t\frac{\partial u}{\partial x}(Z^\e(s))dB_1(s)-\e\int_{r}^t\frac{\partial u}{\partial y}(Z^\e(s))dB_2(s)}\\
\vs
\ds{-\e^2 \int_{r}^t\frac{\partial u}{\partial x}(Z^\e(s))\nu_1(Z^\e(s))\,d\phi^\e(s)-\frac {\e^2}2 \int_{r}^t\frac{\partial^2 u}{\partial x^2}(Z^\e(s))\,ds+\int_{r}^t\a(X^\e(s))\,ds,}
\end{array}\]
and then
\[\begin{array}{l}
\ds{\phi^\e(t)-\phi^\e({r})\leq c\,\e^2(1+(t-r))+\e^2\,\le|\int_{r}^t\frac{\partial u}{\partial x}(Z^\e(s))dB_1(s)\r|+\e\le|\int_{r}^t\frac{\partial u}{\partial y}(Z^\e(s))dB_2(s)\r|}\\
\vs
\ds{+c\,\e^2(\phi^\e(t)-\phi^\e({r}))+\int_{r}^t\a(X^\e(s))\,ds.}
\end{array}\]
In particular, if we take $\e_0=1/\sqrt{c 2}$, we can conclude that
\[\begin{array}{l}
\ds{\phi^\e(t)-\phi^\e({r})\leq c\,\e^2(1+(t-r))+c\,(t-r)}\\
\vs
\ds{+\e^2\,\le|\int_{r}^t\frac{\partial u}{\partial x}(Z^\e(s))dB_1(s)\r|+\e\le|\int_{r}^t\frac{\partial u}{\partial y}(Z^\e(s))dB_2(s)\r|,\ \ \ \ \e\leq \e_0,}
\end{array}
\]
and this yields \eqref{phi}.

\end{proof}

Now, for any $\e, \gamma>0$, we consider the stochastic Skorokhod problem 
\begin{equation}
\label{discrete}
\le\{\begin{array}{l}
\ds{d{Z}^{\e,\gamma}(t)=\sqrt{\hat{\si}_\e}\, dB(t)+\hat{\si}_\e\,\nu({Z}^{\e,\gamma}(t))\,d{\phi}^{\e,\gamma}(t),\ \ \ \ t \in\,[k\gamma,(k+1)\gamma),}\\
\vs
\ds{{Z}^{\e,\gamma}(k\gamma)=Z^\e(k\gamma),}
\end{array}\r.
\end{equation}
where 
\begin{equation}
\label{sigmahat}
\hat{\si}_\e=\le(\ds{\begin{matrix}
0 &  0\\0  &\e^{-2}
\end{matrix}}\r).
\end{equation}
Clearly, for any $t \in\,[k\gamma,(k+1)\gamma)$ the variable ${Z}^{\e,\gamma}(t)$ lives in   the random interval 
\[C(X^\e(k\gamma))=[h_1(X^\e(k\gamma)),h_2(X^\e(k\gamma))].\] Moreover, for any $t \in\,[k\gamma,(k+1)\gamma)$ we have that
${Z}^{\e,\gamma}(t)=(X^{\e}(k\gamma),{Y}^{\e,\gamma}(t))$, where ${Y}^{\e,\gamma}$ solves the problem
\[\le\{
\begin{array}{l}
\ds{d{Y}^{\e,\gamma}(t)=\frac 1{\e}\,dB_2(t)+\frac 1{\e^2}\,\nu_2(X_z^\e(k\gamma),{Y}^{\e,\gamma}(t))\,d{\phi}^{\e,\gamma}(t),\ \ \ \ t \in\,[k\gamma,(k+1)\gamma),}\\
\vs
\ds{{Y}^{\e,\gamma}(k\gamma)=Y_z^{\e}(k\gamma).}
\end{array}\r.\]

\begin{Lemma}
\label{lemmaA.2}
For any $p\geq 1$ there exists $c_p>0$ such that for any $\e,\gamma>0$, $k \in\,\nat$ and $t,s \in\,[k\gamma,(k+1)\gamma)$
\begin{equation}
\label{phihat}
\sup_{z \in\,G}\E_z\,|{\phi}^{\e,\gamma}(t)-{\phi}^{\e,\gamma}(s)|^p\leq c_p\le(\gamma^{p}+\e^p\gamma^{p/2}+\e^{2p}\r).
\end{equation}

\end{Lemma}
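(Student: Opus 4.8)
The plan is to exploit the fact that, on each block $[k\gamma,(k+1)\gamma)$, the frozen process ${Z}^{\e,\gamma}(t)=(X^\e(k\gamma),{Y}^{\e,\gamma}(t))$ has a \emph{constant} first coordinate equal to $X^\e(k\gamma)$, so that the computation carried out in the proof of \eqref{phi} goes through with considerable simplifications: all the terms involving $x$-derivatives of $u$, as well as the co-normal contribution $\frac{\partial u}{\partial x}\,\nu_1\,d{\phi}^{\e,\gamma}$, simply disappear. So the idea is to reproduce that It\^o argument for the one-dimensional reflected process ${Y}^{\e,\gamma}$ and then pass to $p$-th moments.

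Concretely, let $u$ be the function introduced in \eqref{u}, which can be computed explicitly and belongs to $C^2_b(G)$, and fix $k\in\nat$ and $s,t\in[k\gamma,(k+1)\gamma)$ with $s\le t$; write $x_0:=X^\e(k\gamma)$. Applying the It\^o formula to $r\mapsto u({Z}^{\e,\gamma}(r))$ on $[k\gamma,(k+1)\gamma)$ and using that the first component of ${Z}^{\e,\gamma}(r)$ is the constant $x_0$ there, I would obtain
\[
u({Z}^{\e,\gamma}(t))-u({Z}^{\e,\gamma}(s))=\frac 1\e\int_s^t\frac{\partial u}{\partial y}({Z}^{\e,\gamma}(r))\,dB_2(r)+\frac 1{\e^2}\int_s^t\frac{\partial u}{\partial y}({Z}^{\e,\gamma}(r))\nu_2({Z}^{\e,\gamma}(r))\,d{\phi}^{\e,\gamma}(r)+\frac 1{2\e^2}\int_s^t\frac{\partial^2 u}{\partial y^2}({Z}^{\e,\gamma}(r))\,dr.
\]
The crucial observation is that, combining the Neumann conditions in \eqref{u} with the explicit formula for $\nu(x,h_i(x))$, one has $\frac{\partial u}{\partial y}(x,y)\,\nu_2(x,y)=1$ for every $(x,y)\in\partial G$; since ${\phi}^{\e,\gamma}$ increases only when ${Z}^{\e,\gamma}(r)\in\partial G$, the second integral on the right-hand side equals $\e^{-2}({\phi}^{\e,\gamma}(t)-{\phi}^{\e,\gamma}(s))$. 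Moreover, by the first line of \eqref{u}, $\frac{\partial^2 u}{\partial y^2}(x_0,\cdot)\equiv-\a(x_0)$ with $\sup_x|\a(x)|=:c<\infty$ (here we use $h_1,h_2\in C^3_b(\reals)$ and $l\ge l_0>0$). Solving for the local time increment then yields
\[
{\phi}^{\e,\gamma}(t)-{\phi}^{\e,\gamma}(s)=\e^2\big(u({Z}^{\e,\gamma}(t))-u({Z}^{\e,\gamma}(s))\big)-\e\int_s^t\frac{\partial u}{\partial y}({Z}^{\e,\gamma}(r))\,dB_2(r)+\frac 12\,\a(x_0)(t-s),
\]
so that, since $u\in C^2_b(G)$ and $|t-s|\le\gamma$,
\[
|{\phi}^{\e,\gamma}(t)-{\phi}^{\e,\gamma}(s)|\le c\,\e^2+\e\,\le|\int_s^t\frac{\partial u}{\partial y}({Z}^{\e,\gamma}(r))\,dB_2(r)\r|+c\,\gamma .
\]

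To conclude, I would raise this inequality to the $p$-th power, take the expectation $\E_z$, and apply the Burkholder--Davis--Gundy inequality to the martingale term, whose quadratic variation on $[s,t]$ is at most $\le\|\tfrac{\partial u}{\partial y}\r\|_\infty^2(t-s)\le c\,\gamma$, giving $\E_z\le|\int_s^t\frac{\partial u}{\partial y}({Z}^{\e,\gamma})\,dB_2\r|^p\le c_p\,\gamma^{p/2}$. This produces $\E_z|{\phi}^{\e,\gamma}(t)-{\phi}^{\e,\gamma}(s)|^p\le c_p(\gamma^p+\e^p\gamma^{p/2}+\e^{2p})$, which is \eqref{phihat}; the bound is uniform in $z\in G$ because $u$, the constant $c$, and the BDG constant depend neither on $\e,\gamma,k$ nor on the random frozen abscissa $x_0=X^\e(k\gamma)$. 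I do not expect a genuine obstacle here: the only two points requiring care are checking the identity $\frac{\partial u}{\partial y}\,\nu_2\equiv1$ on $\partial G$ from \eqref{u} (this is what pins down the coefficient of $d{\phi}^{\e,\gamma}$ exactly, making the argument clean) and justifying the It\^o formula for the reflected semimartingale ${Z}^{\e,\gamma}$ on the block while keeping all constants uniform in $x_0$ --- the latter being handled simply by first conditioning on $\F_{k\gamma}$, given which ${Z}^{\e,\gamma}$ on $[k\gamma,(k+1)\gamma)$ is a one-dimensional reflected diffusion in the \emph{fixed} interval $[h_1(x_0),h_2(x_0)]$ driven by Brownian increments independent of $\F_{k\gamma}$.
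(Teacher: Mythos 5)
Your proof is correct and rests on exactly the same mechanism as the paper's: the auxiliary function $u$ from \eqref{u}, It\^o's formula on the block where the first coordinate is frozen, the boundary identity $\frac{\partial u}{\partial y}\,\nu_2\equiv 1$ on $\partial G$ to isolate the local-time increment, and BDG for the martingale term, which produce the three contributions $\e^{2p}$, $\e^p\gamma^{p/2}$, $\gamma^p$ in the same way. The only difference is organizational: the paper first replaces $({Z}^{\e,\gamma},{\phi}^{\e,\gamma})$ in distribution by a process driven by an independent Brownian motion and then Brownian-rescales to a unit-diffusion process $Z_2$ at time scale $t/\e^2$ before applying It\^o, whereas you apply It\^o directly with the $\e$-dependent coefficients; this is an inessential simplification and your uniformity-in-$z$ argument (boundedness of $u$, $\partial_y u$, $\a$, independent of the frozen abscissa) is sound.
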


\begin{proof}
We have that
\begin{equation}
\label{cf11}
({Z}^{\e,\gamma}(t+k\gamma),{\phi}^{\e,\gamma}(t+k\gamma))\sim (Z^\e_1(t),\phi^\e_1(t)),\ \ \ \ \ t \in\,[0,\gamma),\end{equation}
where
\begin{equation}
\label{cf12}
\le\{
\begin{array}{l}
\ds{dZ^\e_1(t)=\sqrt{\hat{\si}_\e}\,dB^1(t)+\hat{\si}_\e\,\nu_2(Z^\e_1(t))\,d\phi^\e_1(t),\ \ \ \ t \in\,[0,\gamma),}\\
\vs
\ds{Z^\e_1(0)=Z^{\e}(k\gamma),}
\end{array}\r.\end{equation}
for some $2$-dimensional Brownian motion $B^1(t)$  such that $Z^\e(k\gamma)$  is independent of $B^1(t)$, for $t\geq 0$. Moreover, we have
\[(Z^\e_1(t),\phi^\e_1(t))\sim (Z_2(t/\e^2),\e^2\phi_2(t/\e^2)),\]
where 
\begin{equation}
\label{cf13}
\le\{
\begin{array}{l}
\ds{dZ_2(t)=\sqrt{\hat{\si}_1}d\tilde{B}(t)+\hat{\si}_1\nu_2(Z_2(t))\,d\phi_2(t),\ \ \ \ t \in\,[0,\gamma/\e^2),}\\
\vs
\ds{Z_2(0)=Z^{\e}(k\gamma),}
\end{array}\r.\end{equation}
for some Brownian motion $\tilde{B}(t)$ such that $Z^\e(k\gamma)$  is independent of $\tilde{B}(t)$, for $t\geq 0$.
In particular, this implies that for any $t, s \in\,[k\gamma,(k+1)\gamma)$
\begin{equation}
\label{phihat1}
\E_z\,|{\phi}^{\e,\gamma}(t)-{\phi}^{\e,\gamma}(s)|^p=\e^{2p}\,\tilde{\E}\,|\phi_2((t-k\gamma)/\e^2)-\phi_2((s-k\gamma)/\e^2)|^p.\end{equation}

Now, if $u$ is the same function introduced in \eqref{u}, from It\^o's formula we have
\[\begin{array}{l}
\ds{u(Z_2(t))-u(Z_2(s))=\frac 12\int_s^t \frac{\partial^2 u}{\partial y^2}(Z_2(r))\,dr}\\
\vs
\ds{+
\int_s^t\frac{\partial u}{\partial y}(Z_2(r))\nu_2(Z_2(r))d\phi_2(r)+\int_s^t \frac{\partial u}{\partial y}(Z_2(r))\,d\tilde{B}(r)}\\
\vs
\ds{=-\int_s^t \a(X^\e(k\gamma))\,dr +\int_s^t \frac{\partial u}{\partial y}(Z_2(r))\,d\tilde{B}(r)+\phi_2(t)-\phi_2(s).}
\end{array}\]
This implies
\[\begin{array}{l}
\ds{\tilde{\E}|\phi_2(t)-\phi_2(s)|^p\leq c_p\,(|t-s|^p+|t-s|^{p/2}+1),}
\end{array}\]
so that, thanks to \eqref{phihat1}, we can conclude
\[\E_z|{\phi}^{\e,\gamma}(t)-{\phi}^{\e,\gamma}(s)|^p\leq c_p\,\e^{2p}\le(\frac{|t-s|^p}{\e^{2p}}+\frac{|t-s|^{p/2}}{\e^{p}}+1\r)\leq c_p\le(\gamma^{p}+\e^p\gamma^{p/2}+\e^{2p}\r).\]

\end{proof}

Now, we can prove the main result of this section.
\begin{Theorem}
\label{lim-z-eps}
Assume that
\[G=\le\{(x,y) \in\,\reals^2\,:\,h_1(x)\leq y\leq h_2(x),\ x \in\,\reals\r\},\]
for some $h_1, h_2 \in\,C^3_b(\reals)$, such that 
\[\inf_{x \in\,\reals}h_2(x)-h_1(x):=l_0>0.\]
Then, there exists $\kappa_1>0$ such that, if we set
$\gamma_\e=\e^2\,\log \e^{-\kappa_1}$,
for any  $T>0$ it holds
\begin{equation}
\label{ave}
\lim_{\e\to 0}\, \sup_{z \in\,G}\,\sup_{t \in\,[0,T]}\,\E_z\,|Z^\e(t)-{Z}^{\e,\gamma_\e}(t)|^2=0.
\end{equation}
\end{Theorem}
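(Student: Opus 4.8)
The plan is to compare $Z^\e(t)$ with $Z^{\e,\gamma_\e}(t)$ on each time interval $[k\gamma_\e,(k+1)\gamma_\e)$, where both processes start from the same point $Z^\e(k\gamma_\e)$ but are driven by different dynamics: $Z^\e$ by the full equation \eqref{stochastic} with diffusion matrix $\si_\e$ and $Z^{\e,\gamma_\e}$ by the frozen equation \eqref{discrete} with matrix $\hat{\si}_\e$, which has a zeroed first diagonal entry. First I would write down, for $t\in[k\gamma_\e,(k+1)\gamma_\e)$, the identity for the difference $Z^\e(t)-Z^{\e,\gamma_\e}(t)$. The $y$-components both evolve at speed $\e^{-1}$ with reflection, but with different reflection terms because the reflecting directions $\nu_2(X^\e(s),\cdot)$ and $\nu_2(X^\e(k\gamma_\e),\cdot)$ differ; the $x$-components differ because $Z^{\e,\gamma_\e}$ freezes $X^\e$ at the left endpoint of the interval while $X^\e$ keeps moving by $dB_1$ and by the slow reflection term $\nu_1\,d\phi^\e$. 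The goal is to estimate $\E_z|Z^\e(t)-Z^{\e,\gamma_\e}(t)|^2$ on such an interval, iterate over the $O(T/\gamma_\e)$ intervals, and choose $\kappa_1$ so that the accumulated error vanishes as $\e\to0$.

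The key estimates, in order, would be: (i) a bound on the oscillation of the slow component $X^\e$ over a single interval of length $\gamma_\e$, namely $\E_z\sup_{s,t\in[k\gamma_\e,(k+1)\gamma_\e)}|X^\e(t)-X^\e(s)|^{2p}$; the Brownian part contributes $O(\gamma_\e^p)$ and the reflection part $\nu_1\,d\phi^\e$ is controlled via Lemma~3.4 (the bound \eqref{phi}), whose dominant term is $c(t-r)$, giving an $O(\gamma_\e^{2p})$ contribution plus stochastic-integral terms handled by Burkholder--Davis--Gundy, so the whole oscillation is $O(\gamma_\e^p)$ in $L^{2p}$. (ii) Using the exterior-sphere/boundary condition \eqref{boundarycond} together with Itô's formula applied to $|Z^\e(t)-Z^{\e,\gamma_\e}(t)|^2$, one gets that the reflection terms contribute non-positively up to a $\kappa_0$-correction (this is the standard Lions--Sznitman estimate for SDEs with reflection), while the genuinely problematic terms are: the mismatch of reflecting directions, which is bounded by $\|\nabla\nu\|_\infty$ times the oscillation of $X^\e$, integrated against $\e^{-2}\,d\phi^\e$ (resp.\ $d\phi^{\e,\gamma_\e}$); the frozen-versus-moving drift in the $x$-direction, directly bounded by the oscillation from (i); and the $dB_1$ term, which is what forces $Z^\e$ and $Z^{\e,\gamma_\e}$ apart in the $x$-direction and contributes $O(\gamma_\e)$. (iii) Control of the local-time increments: for $Z^{\e,\gamma_\e}$ this is exactly Lemma~\ref{lemmaA.2}, giving $\E_z|\phi^{\e,\gamma_\e}(t)-\phi^{\e,\gamma_\e}(s)|^p\le c_p(\gamma_\e^p+\e^p\gamma_\e^{p/2}+\e^{2p})$; for $\phi^\e$ one needs the analogous bound over an interval of length $\gamma_\e$, which follows from \eqref{phi} and BDG. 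The delicate point is that the mismatch-of-directions term carries a factor $\e^{-2}$ in front of $d\phi^\e$; one multiplies this by the oscillation of $X^\e$, which is small, but one must check the powers balance. Writing $\gamma_\e=\e^2\log\e^{-\kappa_1}$, the increment of $\e^{-2}\phi^\e$ over an interval is of order $\log\e^{-\kappa_1}$ up to fluctuations, and the oscillation of $X^\e$ is of order $\sqrt{\gamma_\e}=\e\sqrt{\log\e^{-\kappa_1}}$; their product is $O(\e(\log\e^{-\kappa_1})^{3/2})\to0$, and crucially it is summable over the $T/\gamma_\e=O(\e^{-2}/\log\e^{-\kappa_1})$ intervals only if we use the Gronwall-type self-improvement below rather than a naive sum.

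The iteration step: let $\psi_k=\sup_{t\in[k\gamma_\e,(k+1)\gamma_\e)}\E_z|Z^\e(t)-Z^{\e,\gamma_\e}(t)|^2$ — but since $Z^{\e,\gamma_\e}$ is \emph{reset} to $Z^\e(k\gamma_\e)$ at each node, $\psi_k$ does not accumulate from $\psi_{k-1}$; rather, within each interval one has $\psi_k\le \rho_\e$, where $\rho_\e$ is a single-interval error bound built from (i)--(iii), of the form $\rho_\e=C(\gamma_\e+\e(\log\e^{-\kappa_1})^{3/2}+\e^2(\log\e^{-\kappa_1})^2)$ after applying Gronwall on the interval to absorb the term $\kappa_0\int|Z^\e-Z^{\e,\gamma_\e}|^2\,d(\phi^\e+\phi^{\e,\gamma_\e})$ (here one uses that $\phi^\e,\phi^{\e,\gamma_\e}$ have, with high probability, bounded increments on an interval of length $\gamma_\e$, else one splits the expectation using Hölder and the exponential-moment bounds on local time that follow from \eqref{phi}). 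Since the bound $\rho_\e$ is uniform in $k$ and in $z$, and since $\sup_{t\in[0,T]}$ is just the max over the finitely many intervals, we conclude $\sup_{z\in G}\sup_{t\in[0,T]}\E_z|Z^\e(t)-Z^{\e,\gamma_\e}(t)|^2\le\rho_\e\to0$ as $\e\to0$, for a suitable choice of $\kappa_1$ (any $\kappa_1>0$ works for \eqref{ave} itself; the precise value of $\kappa_1$ is dictated by the companion averaging lemma that uses this theorem). The main obstacle is step (ii)--(iii) together: honestly controlling the direction-mismatch reflection term $\e^{-2}\int \bigl(\nu(Z^\e(s))-\nu(Z^{\e,\gamma_\e}(s))\bigr)\,d\phi^\e(s)$ against the smallness of the slow oscillation, and in particular handling the $\kappa_0$-feedback term via a stochastic Gronwall argument in the presence of the reflecting local time rather than Lebesgue measure — this is exactly the point the introduction flags as "requir[ing] suitable estimates for the time increments of the local time of the process $(X^\e(t),Y^\e(t))$ at the boundary of $G$."
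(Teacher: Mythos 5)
Your overall architecture matches the paper's: comparison interval by interval with reset at the nodes $k\gamma_\e$, the $O(\gamma_\e^{1/2})$ oscillation bound for the slow component via \eqref{phi}, the local-time increment estimate of Lemma \ref{lemmaA.2}, the exterior-sphere inequality \eqref{boundarycond} fed into It\^o's formula for the squared difference, and a per-interval Gronwall with no accumulation across intervals. There is, however, a genuine gap, and it sits exactly where you flag ``the main obstacle''. The $\kappa_0$-feedback term is not $\kappa_0\int|Z^\e-Z^{\e,\gamma_\e}|^2\,d(\phi^\e+\phi^{\e,\gamma_\e})$: the reflection vector is $\si_\e\,\nu$, so It\^o's formula produces $\frac{2}{\e^2}\Delta Y^\e\,\nu_2(Z^\e)\,d\phi^\e$, and after \eqref{boundarycond} the quadratic feedback carries the factor $\kappa_0\,\e^{-2}$ in front of $d\phi^\e$. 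Since the increment of $\phi^\e$ over an interval of length $\gamma_\e$ is of order $\gamma_\e$ (the dominant term in \eqref{phi} is $c(t-r)$), any Gronwall absorbing this term --- in the local-time measure, as you propose, or in Lebesgue measure --- costs a factor $\exp(c\,\gamma_\e/\e^2)=\e^{-c\kappa_1}$. Your claimed single-interval bound $\rho_\e$ is therefore missing a factor $\e^{-c\kappa_1}$, and your parenthetical assertion that ``any $\kappa_1>0$ works for \eqref{ave} itself'' is false: the theorem says ``there exists $\kappa_1>0$'' precisely because one must take $\kappa_1$ small enough that $\e^{2-c\kappa_1}$ (times logarithms) still tends to zero; this constraint is internal to the present proof, not dictated by the later averaging argument.

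The concrete device you are missing, which makes the feedback tractable without exponential moments of the local time, is the Lions--Sznitman exponential weight: the paper takes a $C^1_b$ extension $\Psi$ of the distance to $\partial G$ with $\nabla\Psi=\nu$ on $\partial G$ and applies It\^o to $H_\e(t)|\Delta Y^\e(t)|^2$ with $H_\e(t)=\exp\bigl(-\a^{-1}[\Psi(Z^\e(t))+\Psi(\hat{Z}^\e(t))]\bigr)$. The differential of the weight produces at each reflection the negative term $-\a^{-1}H_\e|\Delta Y^\e|^2(\nu_1^2+\e^{-2}\nu_2^2)\,d\phi^\e$, which for $\a$ small dominates the $\kappa_0\,\e^{-2}|\Delta Z^\e|^2\,d\phi^\e$ feedback pathwise; what survives is only $\e^{-2}(|\Delta X^\e|^2+|\Delta X^\e|)\,d(\phi^\e+\hat{\phi}^\e)$, handled by Cauchy--Schwarz against the fourth-moment oscillation bound, plus a $c\,\e^{-2}\int|\Delta Y^\e|^2\,ds$ Lebesgue--Gronwall term from the quadratic variation of the weight --- which is where the unavoidable $\e^{-c\kappa_1}$ enters. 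Your alternative route (a stochastic Gronwall in the measure $\e^{-2}d\phi^\e$ via exponential moments, or high-probability bounds, for the local-time increments) is plausible but not carried out: such exponential moments do not follow from the polynomial bounds \eqref{phi}--\eqref{phihat} you cite and would require exponentiating the stochastic integrals in \eqref{phi}. The remaining differences (working with $|\Delta Y^\e|^2$ only and treating $\Delta X^\e$ as a pure oscillation, using fourth rather than second moments) are organizational.
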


\begin{proof} In what follows, for the sake of simplicity, we shall denote 
\[\hat{Y}^\e(t):={Y}^{\e,\gamma_\e}(t),\ \ \ \hat{Z}^\e(t):={Z}^{\e,\gamma_\e}(t),\ \ \ \ \hat{\phi}^\e(t):={\phi}^{\e,\gamma_\e}(t),\ \ \ \ \e>0.\]

As we are assuming the domain $G$ to be a smooth and   bounded open sets of $\reals^2$, by proceeding as in \cite{ls84} we can introduce an extension $\Psi \in\, C^1_b(\reals^2)$ of the distance function $d(\cdot,{\partial G})$, which is defined on the restriction to $G$ of a neighborhood of $\partial G$, such that
\begin{equation}
\label{cf70}
\nabla\Psi(x,y)=\nu(x,y),\ \ \ \ (x,y) \in\,\partial G.\end{equation}
Then, for each $\e>0$ we define
\[H_\e(t):=\exp\le(-\frac 1\a\le[\Psi(Z^\e(t))+\Psi(\hat{Z}^{\e}(t))\r]\r),\ \ \ \ t\geq 0,\]
where $\a>0$ is some constant to be chosen later. Notice that, as $\Psi$  is bounded, for any $\a>0$ 
there exists $c_\a>0$ such that
\begin{equation}
\label{psi}
c_\a<H_\e(t)<\frac 1{c_\a},\ \ \ \ t\geq 0.
\end{equation}
It is immediate to check that
\[\le\{
\begin{array}{l}
\ds{
d(Z^\e-\hat{Z}^{\e})(t)=\si\,dB(t)+\si_\e\,\nu(Z^\e(t))\,d\phi^\e(t)-\hat{\si}_\e\,\nu(\hat{Z}^{\e}(t))d\hat{\phi}^{\e}(t),\ \ \ \ t \in\,[k\gamma_\e,(k+1)\gamma_\e),}\\
\vs
\ds{(Z^\e-\hat{Z}^{\e,\gamma})(k\gamma_\e)=0,}
\end{array}\r.
\]
where 
\[\si=\sqrt{\si_\e}-\sqrt{\hat{\si}_\e}=\le(\ds{\begin{matrix}
1 &  0\\0  &0
\end{matrix}}\r).\]
Then, if we set $\Delta Y^{\e}(t)=Y^\e(t)-\hat{Y}^{\e}(t)$, for $t\geq 0$, thanks to  \eqref{stochastic}, \eqref{discrete} and \eqref{cf70}, as a consequence of It\^o's formula
 we obtain that for any $k \in\,\nat$ and $t \in\,[k\gamma_\e,(k+1)\gamma_\e)$
\begin{equation}
\label{e1}
\begin{array}{l}
\ds{H_\e(t)|\Delta Y^{\e}(t)|^2=\frac{2}{\e^2}\int_{k\gamma_\e}^tH_\e(s)\le[\Delta Y^{\e}(s)\,\nu_2(Z^\e(s))\,d\phi^\e(s)- \Delta Y^{\e}(s)\,\nu_2(\hat{Z}^{\e}(s))\,d\hat{\phi}^{\e}(s)\r]\,ds}\\
\vs
\ds{-\frac 1\a \int_{k\gamma_\e}^tH_\e(s)|\Delta Y^{\e}(s)|^2\le[\langle \nabla \Psi(Z^\e(s)),\sqrt{\si_\e} dB(s)\rangle+\langle \nabla \Psi(\hat{Z}^{\e}(s)),\sqrt{\hat{\si}_\e} dB(s)\rangle\r.}\\
\vs
\ds{+\langle \nabla \Psi(Z^\e(s)),\si_\e \nu(Z^\e(s)) \rangle\,d\phi^\e(s)+\langle \nabla \Psi(\hat{Z}^{\e}(s)),\hat{\si}_\e \nu(\hat{Z}^{\e}(s))\rangle \,d\hat{\phi}^{\e}(s)}\\
\vs
\ds{\le.+\frac 12 \text{Tr}\le[D^2\Psi(Z^\e(s))\si_\e\r]\,ds+\frac 12\text{Tr}\le[D^2\Psi(\hat{Z}^{\e}(s))\hat{\si_\e}\r]\,ds\r]}\\
\vs
\ds{+\frac 1{\a^2}\int_{k\gamma_\e}^tH_\e(s)|\Delta Y^{\e}(s)|^2\frac 1{\e^2} |\nu_1(Z^\e(s))+\nu_1(\hat{Z}^{\e}(s))|^2\,|\nu_2(Z^\e(s))+\nu_2(\hat{Z}^{\e}(s))|^2\,ds.}
\end{array}\end{equation}
Now, we have
\[\begin{array}{l}
\ds{\frac{1}{\e^2}\Delta Y^{\e}(s)\,\nu_2(Z^\e(s))-\frac 1{2\a}|\Delta Y^{\e}(s)|^2\langle \nabla \Psi(Z^\e(s)),\si_\e \nu(Z^\e(s)) \rangle}\\
\vs
\ds{=\frac{1}{\e^2}\langle Z^\e(s)-\hat{Z}^{\e}(s),\nu(Z^\e(s))\rangle-\frac{1}{\e^2}(X^\e(s)-\hat{X}^{\e}(s))\nu_1(Z^\e(s))}\\
\vs
\ds{-\frac 1{2\a}\le(|Z^\e(s)-\hat{Z}^{\e}(s)|^2-|X^\e(s)-\hat{X}^{\e}(s)|^2\r)\le(\nu_1^2(s)+\frac 1{\e^2}\,\nu_2^2(s)\r)}\\
\vs
\ds{\leq \frac{1}{\e^2}\le(\langle Z^\e(s)-\hat{Z}^{\e}(s),\nu(Z^\e(s))\rangle-\frac 1{2\a}|Z^\e(s)-Z^{\e}(s)|^2\r)}\\
\vs
\ds{-\frac{1}{\e^2}(X^\e(s)-\hat{X}^{\e}(s))\nu_1(Z^\e(s))+\frac c{2\a\e^2}|X^\e(s)-\hat{X}^{\e}(s)|^2,}
\end{array}\]
last inequality following from the fact that for any $z \in\,G$ and $\e \in\,(0,1)$
\[\frac c{\e^2}:=\frac 1{\e^2}\le(1+|h^\prime_2|_\infty^2\r)^{-1}\leq \nu_1^2(z)+\frac 1{\e^2}\nu_2^2(z)\leq \frac 1{\e^2}.\]
Then, thanks to \eqref{boundarycond}, there exists $\a>0$ such that  we have
\begin{equation}
\label{e2}
\begin{array}{l}
\ds{\frac{1}{\e^2}\Delta Y^{\e}(s)\,\nu_2(Z^\e(s))\,d\phi^\e(s)-\frac 1{2\a}|\Delta Y^{\e}(s)|^2\langle \nabla \Psi(Z^\e(s)),\si_\e \nu(Z^\e(s)) \rangle \,d\phi^\e(s)}\\
\vs
\ds{\leq \frac{1}{\e^2}\le(\frac 1{2\a}|X^\e(s)-\hat{X}^{\e}(s)|^2-(X^\e(s)-\hat{X}^{\e}(s))\nu_1(Z^\e(s))\r)
\,d\phi^\e(s).}
\end{array}\end{equation}
In the same way, we have
\begin{equation}
\label{e3}
\begin{array}{l}
\ds{-\frac 1{\e^2}\, \Delta Y^{\e}(s)\,\nu_2(\hat{Z}^{\e}(s))\,d\hat{\phi}^{\e}(s)-\frac 1{2\a} \,|\Delta Y^{\e}(s)|^2\langle \nabla \Psi(\hat{Z}^{\e}(s)),\hat{\si}_\e \nu(\hat{Z}^{\e}(s))\rangle \,d\hat{\phi}^{\e}(s)}\\
\vs
\ds{\leq \frac{1}{\e^2}\le(\frac 1{2\a}|X^\e(s)-\hat{X}^{\e}(s)|^2+(X^\e(s)-\hat{X}^{\e}(s))\nu_1(\hat{Z}^{\e}(s))\r)\,d\hat{\phi}^{\e}(s).}
\end{array}\end{equation}
Thus, if we use \eqref{e2} and \eqref{e3} in \eqref{e1}, thanks to \eqref{psi} we get
\[\begin{array}{l}
\ds{|\Delta Y^{\e}(t)|^2\leq \frac c{\e^2}\,\int_{k\gamma_\e}^t\le(|X^\e(s)-\hat{X}^{\e}(s)|^2+|X^\e(s)-\hat{X}^{\e}(s)|\r)\le(d\phi^\e(s)+d\hat{\phi}^{\e}(s)\r)}\\
\vs
\ds{+\le|\int_{k\gamma_\e}^t H_\e(s)|\Delta Y^{\e}(s)|^2\le[\langle \nabla \Psi(Z^\e(s)),\sqrt{\si_\e} dB(s)\rangle+\langle \nabla \Psi(\hat{Z}^{\e}(s)),\sqrt{\hat{\si}_\e} dB(s)\rangle\r] \r|}\\
\vs
\ds{+\frac c{\e^2}\int_{k\gamma_\e}^t|\Delta Y^{\e}(s)|^2\,ds,}
\end{array}\]
so that
\[\begin{array}{l}
\ds{|\Delta Y^{\e}(t)|^2\leq \frac c{\e^2}\int_{k\gamma_\e}^t|\Delta Y^{\e}(s)|^2\,ds+ \frac c{\e^2}\sup_{s \in\,[k\gamma_\e,(k+1)\gamma_\e]}\le(|X^\e(s)-\hat{X}^{\e}(s)|^2+|X^\e(s)-\hat{X}^{\e}(s)|\r)}\\
\vs
\ds{\times \le[\le(\phi^\e((k+1)\gamma_\e)-\phi^\e({k\gamma_\e})\r)+\le(\hat{\phi}^{\e}((k+1)\gamma_\e)-\hat{\phi}^{\e}({k\gamma_\e})\r)\r]}\\
\vs
\ds{+\le|\int_{k\gamma_\e}^t H_\e(s)|\Delta Y^{\e}(s)|^2\le[\langle \nabla \Psi(Z^\e(s)),\sqrt{\si_\e} dB(s)\rangle+\langle \nabla \Psi(\hat{Z}^{\e}(s)),\sqrt{\hat{\si}_\e} dB(s)\rangle\r] \r|.}
\end{array}\]
This implies,
\[\begin{array}{l}
\ds{\E_z\,|\Delta Y^{\e}(t)|^4\leq \frac{c}{\e^4}\le(\gamma_\e+\e^2\r)\int_{k\gamma_\e}^t\E_z\,|\Delta Y^{\e}(s)|^4\,ds}\\
\vs
\ds{+\frac c{\e^4}\,\Lambda_\e\le(\E_z\,\le|\phi^\e((k+1)\gamma_\e)-\phi^\e(k\gamma_\e)\r|^2+\E_z\,\le|\hat{\phi}^{\e}((k+1)\gamma_\e)-\hat{\phi}^{\e}(k\gamma_\e)\r|^2\r),}
\end{array}\]
where
\[\Lambda_\e:=\E_z\sup_{s \in\,[k\gamma_\e,(k+1)\gamma_\e]}\le(|X^\e(s)-\hat{X}^{\e}(s)|^4+|X^\e(s)-\hat{X}^{\e}(s)|^2\r).\]
Therefore, thanks to \eqref{phi} and \eqref{phihat}, we get
\[\begin{array}{l}
\ds{\E_z\,|\Delta Y^{\e}(t)|^4\leq \frac{c}{\e^4}\le(\gamma_\e+\e^2\r)\int_{k\gamma_\e}^t\E_z\,|\Delta Y^{\e}(s)|^4\,ds+\Lambda_\e\,\big[1+\le(\frac{\gamma_\e}{\e^2}\r)^2\big],}
\end{array}
\]
and, since $\e^2/\gamma_\e\leq c$, the Gronwall lemma gives
\begin{equation}
\label{cf1}
\E_z\,|\Delta Y^{\e}(t)|^4\leq c\,\Lambda_\e \big[1+\le(\frac{\gamma_\e}{\e^2}\r)^2\big]\exp\big[c\le(\frac{\gamma_\e}{\e^2}\r)^2\big].\end{equation}

Now,  for any $s \in\,[k\gamma_\e,(k+1)\gamma_\e]$ we have
\[X^\e(s)-\hat{X}^{\e}(s)=B_1(s)-B_1(k\gamma_\e)+\int_{k\gamma_\e}^s\nu_1(X^\e(r),Y^\e(r))\,d\phi^\e(r),\]
so that, thanks to \eqref{phi}, for any $p\geq 2$
\[\begin{array}{l}
\ds{\E_z\sup_{s \in\,[k\gamma_\e,(k+1)\gamma_\e]}|X^\e(s)-\hat{X}^{\e}(s)|^p\leq c_p \gamma_\e^{p/2}+c_p\, \E_z\,|\phi^\e((k+1)\gamma_\e)-\phi^\e(k\gamma_\e)|^p\leq c_p\,\gamma_\e^{p/2}.}\end{array}\]
This implies
$\Lambda_\e\leq c\,\gamma_\e,$
so that from \eqref{cf1} we get
\[\begin{array}{l}
\ds{\E_z\,|\Delta Y^{\e}(t)|^4\leq c\,\gamma_\e \big[1+\le(\frac{\gamma_\e}{\e^2}\r)^2\big]\exp\big[c\le(\frac{\gamma_\e}{\e^2}\r)^2\big]}\\
\vs
\ds{=c\,\e^2\,\log \e^{-\kappa_1}
\le(1+(\log \e^{-\kappa_1})^2\r)\exp\le(c\,\log \e^{-\kappa_1}\r).}
\end{array}\]
Therefore, if we take $\kappa_1<c/2$, we can conclude that  \eqref{ave} holds true.

\end{proof}

\section{The Neumann problem associated with the operator $L_\e$}
\label{sec4}

For any $\e>0$, we define 
\begin{equation}
\label{cf303}
{\cal L}_\e u(x,y)=\frac 12 \le(\frac{\partial^2 u}{\partial x^2}+\frac 1{\e^2}\frac{\partial^2 u}{\partial y^2}\r)(x,y)=\frac 12\,\text{div}\le(\si_\e \nabla u\r)(x,y),\ \ \ \ (x,y) \in\,G,\end{equation}
For any $\e>0$,  the uniformly elliptic second order differential operator ${\cal L}_\e$, endowed with the co-normal  derivative boundary condition 
\[\nabla u\cdot \si_\e \nu_{|_{\partial G}}=0,\]
generates a strongly continuous analytic semigroup $S_\e(t)$, $t\geq 0$, in the Hilbert space $H$ and in the Banach space $C(\bar{G})$. The generator of $S_\e(t)$ will be denoted by $L_\e$. For a proof of all these results see e.g. \cite{lunardi}.
Moreover, the Lebesgue measure on $G$ is  invariant  for the semigroup $S_\e(t)$, so that $S_\e(t)$ is a contraction on $H$ and $C(G)$.

In the present section we consider the  Cauchy linear problem associated with ${\cal L}_\e$
\begin{equation}
\label{neumann}
\le\{\begin{array}{l}
\ds{\frac{\partial \rho_\e}{\partial t}(t,x,y)={\cal L}_\e \rho_\e(t,x,y),\ \ \ \ \ (x,y) \in\,G,\ \ \ \ t>0,}\\
\vs
\ds{\nabla \rho_\e(t,x,y)\cdot \si_\e \nu(x,y)=0,\ \ \ (x,y) \in\,\partial G,\ \ \ \ \ \ \ \rho_\e(0,x,y)=\varphi(x,y),\ \ \ \ (x,y) \in\,G.}
\end{array}\r.
\end{equation}

It is well known (for a proof see e.g.  \cite[Theorem 2.5.1]{fred})  that the solution $\rho_\e(t)$ to  problem \eqref{neumann} has a probabilistic representation in terms of the solution  of the stochastic equation with  reflection \eqref{stochastic}.
Namely, it holds
\[\rho_\e(t,z)=S_\e(t)\varphi(z)=\E_z\, \varphi(Z^\e(t)),\ \ \ t\geq 0,\ \ \ z=(x,y) \in\,G.\]
Our aim here  is  studying the limiting behavior of $\rho_\e(t)$, as $\e\downarrow 0$.  

To this purpose, we first introduce some notation (see \cite{fw12} for all details). For any edge $I_k=\{(x,k)\,:\,a_k\leq x\leq b_k\}$ on the graph $\Gamma$ and for any $a_k\leq a<b\leq b_k$, we denote
\[G_k(a,b):=\{(x,y) \in\,\Pi^{-1}(I_k)\,:\,a<x<b\},\ \ \ G_k[a,b]:=\{(x,y) \in\,\Pi^{-1}(I_k)\,:\,a\leq x\leq b\},\]
and for any $\d>0$ we define
\[G(\d):=\bigcup_{k=1}^N G_k[a_k+\d,b_k-\d].\]
For any vertex $O_i=(x_i,k_1)=\ldots=(x_i,k_{s_i})$ and $a<x_i<b$ we denote
\[G(O_i,a,b):=\bigcup_{j=1}^{s_i}\le\{(x,y) \in\,\Pi^{-1}(I_{k_j})\,:\,x \in\,(a,b)\r\}.\]
Finally, for any vertex $O_i$ and edge $I_k$, having $O_i=(x_i,k)$ as one of its endpoints, and for any $\d>0$ we denote
\[C_{ik}(\d):=\{(x,y) \in\,\Pi^{-1}(I_k)\,:\,x=x_i\pm \d\},\]
and then we set
\[C(\d):=\bigcup_{i,k}C_{ik}(\d).\] 
Notice that if $0<\d^\prime<\d$, then
\[C_{ik}(\d^\prime)\subset G(O_i,x_i-\d,x_i+\d).\]

Next,  for any $\e, \d,\d^\prime>0$, with $0<\d^\prime<\d$, we introduce the following sequence of stopping times 
\[\si^{\e,\d,\d^\prime}_n=\min\{\,t\geq \tau^{\e,\d,\d^\prime}_n\,:\,Z^\e(t) \in\,G(\d)\,\},\ \ \ \ \ \tau^{\e,\d,\d^\prime}_n=\min\{t> \si^{\e,\d,\d^\prime}_{n-1}\,:\,Z^\e(t) \in\,C(\d^\prime)\},\]
with $\tau^{\e,\d,\d^\prime}_0=0$. 
For any fixed $\e>0$ we have that
\[\lim_{n\to \infty} \tau^{\e,\d,\d^\prime}_n=\lim_{n\to \infty} \si^{\e,\d,\d^\prime}_n=\infty,\ \ \ \ \mathbb{P}-\text{a.s.}\]
and for any $n \in\,\nat$
\[Z^\e(\tau^{\e,\d,\d^\prime}_n) \in\,C(\d^\prime),\ \ \ \ Z^\e(\si^{\e,\d,\d^\prime}_n) \in\,C(\d).\]
Moreover, if $Z^\e(0) \in\,G(\d)$, we have that $\si^{\e,\d,\d^\prime}_0=0$ and $\tau^{\e,\d,\d^\prime}_1$ is the first time the process $Z^\e(t)$ touches $C(\d^\prime)$.

\begin{Lemma}
\label{lemf}
If $G$ satisfies assumptions I-IV, then, for any $0<\tau<T$ and for any $\varphi \in\,C(\bar{G})$ and $z \in\,G$
\begin{equation}
\label{cf100}
\lim_{\e\to 0}\,\sup_{t \in\,[\tau,T]}\le|\mathbb{E}_z(\varphi^\wedge)^\vee(Z^\e(t))-\bar{{\mathbb E}}_{\Pi(z)}\varphi^\wedge(\bar{Z}(t))\r|=0.
\end{equation}
\end{Lemma}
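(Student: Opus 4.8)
The plan is to establish \eqref{cf100} by splitting it into the two limits \eqref{cf6intro} and \eqref{cf100intro} announced in the introduction, i.e. I would first prove
\[
\lim_{\e\to 0}\sup_{t\in[\tau,T]}\bigl|\E_z(\varphi^\wedge)^\vee(Z^\e(t))-\E_z\varphi^\wedge(\Pi(Z^\e(t)))\bigr|=0,
\]
and then
\[
\lim_{\e\to 0}\sup_{t\in[\tau,T]}\bigl|\E_z\varphi^\wedge(\Pi(Z^\e(t)))-\bar{\E}_{\Pi(z)}\varphi^\wedge(\bar Z(t))\bigr|=0,
\]
and combine by the triangle inequality. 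For the first limit, the point is that $(\varphi^\wedge)^\vee(Z^\e(t))=\varphi^\wedge(\Pi(Z^\e(t)))$ exactly when $Z^\e(t)$ lies on a vertical section away from the singular sections, and the difference is controlled by $\|\varphi\|_\infty$ on the event that $\Pi(Z^\e(t))$ sits in a small neighborhood of an interior vertex $O_i$. So the heart of this step is an estimate of the form $\P_z(\,d(\Pi(Z^\e(t)),O_i)<\d\,)\to 0$ uniformly in $t\in[\tau,T]$ as first $\e\to0$ and then $\d\to0$; this is where the stopping times $\si^{\e,\d,\d'}_n,\tau^{\e,\d,\d'}_n$ and the exit-time estimates near the points where $\nu_2=0$ enter, using assumption IV to guarantee the $x$-drift pushes the process off the vertex. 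One shows the process spends asymptotically negligible time in $\bigcup_i G(O_i,x_i-\d,x_i+\d)$, via an occupation-time/Khasminskii-type bound combined with the fact that the fast $y$-motion makes excursions into these wedges short.

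For the second limit, the function $\varphi^\wedge$ is bounded and measurable on $\Gamma$, and continuous on each edge, but possibly discontinuous at interior vertices, so \eqref{limite} (weak convergence of $\Pi(Z^\e)$ to $\bar Z$ in $C([0,T];\Gamma)$) does not apply directly. The plan is to approximate $\varphi^\wedge$ from above and below by continuous functions $\psi^\pm_\d$ on $\bar\Gamma$ that agree with $\varphi^\wedge$ outside the $\d$-neighborhoods of the vertices and satisfy $\psi^-_\d\le\varphi^\wedge\le\psi^+_\d$ with $\|\psi^+_\d-\psi^-_\d\|_\infty\le 2\|\varphi\|_\infty$ but supported in those small neighborhoods. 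Then \eqref{limite} gives $\E_z\psi^\pm_\d(\Pi(Z^\e(t)))\to\bar\E_{\Pi(z)}\psi^\pm_\d(\bar Z(t))$, and one sandwiches; the remaining error is controlled by $\P_z(d(\Pi(Z^\e(t)),O_i)<\d)$ — the same occupation estimate as above — together with the corresponding estimate for the limit process $\bar\P_{\Pi(z)}(d(\bar Z(t),O_i)<\d)$, which is small because $\bar Z$ spends no positive amount of time at a fixed point and its transition densities are regular on the interior of the edges (here one may also invoke the strong Markov property and the explicit gluing conditions \eqref{gluing}). Uniformity in $t\in[\tau,T]$ for the limit process follows from continuity in $t$ of $\bar S(t)\varphi^\wedge$ away from $t=0$ and a compactness argument; the restriction $\tau>0$ is needed precisely so that by time $\tau$ the process has had a chance to diffuse away from the starting configuration and any initial concentration near a vertex has dissipated.

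The main obstacle, as the introduction itself flags, is the first step — the transfer from $\varphi^\wedge(\Pi(Z^\e(t)))$ to $(\varphi^\wedge)^\vee(Z^\e(t))$ — and more precisely the uniform-in-$t$ control of the time $Z^\e$ spends in neighborhoods of the sections $E_i=\Pi^{-1}(O_i)$ where $\nu_2=0$. Near such a section the normal has a nonvanishing horizontal component (assumption IV), the co-normal reflection of $Z^\e$ acquires a strong $\e^{-2}$-scaled push in the $x$-direction each time the fast component hits $\partial G$, and one must show this drives the $x$-coordinate out of the wedge before the process can accumulate occupation time there; quantifying this requires the local-time increment bounds \eqref{phi} and \eqref{phihat} together with a careful analysis of the Skorokhod problem near the corner-like region, in the spirit of the exit-time estimates alluded to after \eqref{cf100intro}. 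The averaging needed to identify the limit (via Theorem \ref{lim-z-eps}, which freezes the slow variable on intervals of length $\gamma_\e$) is comparatively routine once these boundary-occupation estimates are in hand.
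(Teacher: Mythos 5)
Your decomposition starts from a false premise: by the definition $f^\vee(x,y)=f(\Pi(x,y))$ given in Section 3, the identity $(\varphi^\wedge)^\vee(Z^\e(t))=\varphi^\wedge(\Pi(Z^\e(t)))$ holds everywhere on $G$, not just ``away from the singular sections'', so your first limit reads $0=0$ and there is nothing to prove there. The genuinely hard averaging statement that you describe at length --- the transfer from $\varphi(Z^\e(t))$ to $\varphi^\wedge(\Pi(Z^\e(t)))$, requiring Theorem \ref{lim-z-eps}, the local--time increment bounds \eqref{phi} and \eqref{phihat}, and the Khasminskii--type localization --- is limit \eqref{cf6intro}, which is the content of Theorem \ref{theorem4.1}, not of this lemma. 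Lemma \ref{lemf} is precisely and only your ``second limit'' \eqref{cf100intro}: pushing the weak convergence \eqref{limite} through the test function $\varphi^\wedge$, which is bounded and continuous on $\bar{\Gamma}$ except at the interior vertices.

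For that step your outline does match the paper's proof (approximate $\varphi^\wedge$ by $\psi_\d \in C(\bar{\Gamma})$ agreeing with it on $\Pi(G(\d/2))$, apply \eqref{limite} to $\psi_\d$, and control the error by the probability that $Z^\e(t)$, resp.\ $\bar{Z}(t)$, lies within $\d$ of a vertex section), but the decisive quantitative estimate is missing: you defer the occupation bound to your vacuous first step and only name its ingredients. What is actually needed, and what the paper proves, is that for $t \in [\tau,T]$ the error term is $O(\d/\tau)$ uniformly in small $\e$: one decomposes the event $\{Z^\e(t)\notin G(\d/2)\}$ over the excursions $[\tau^{\e,\d,\d/2}_n,\si^{\e,\d,\d/2}_n)$, uses \cite[Lemma 3.10]{fw12} to get $\E_z \exp(-\tau^{\e,\d,\d/2}_n)\leq (1-\bar{\rho}\d)^n$, so that the sum over excursions contributes a factor $1/(\bar{\rho}\d)$, and uses the exit--time bound \eqref{cf300} together with Chebyshev, $\P_z(\si^{\e,\d,\d/2}_0>t)\leq \E_z\si^{\e,\d,\d/2}_0/t\leq 5\d^2/\tau$, so that each excursion contributes $O(\d^2/\tau)$; it is exactly the product $\d^{-1}\cdot\d^2$ that closes the argument, and also explains why the restriction $\tau>0$ is needed. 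Finally, for the limit--process term you propose to invoke regularity of the transition densities of $\bar{Z}$ near the vertices; the paper avoids any such input by sandwiching the indicator of $\Pi(G(\d/2))^c$ between continuous functions $f_\d$ and transferring the already established prelimit occupation bound to $\bar{Z}$ via \eqref{cf1000} --- a cleaner route you should adopt, since no regularity of $\bar{S}(t)$ at the vertices is established in the paper.
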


\begin{proof}
As a consequence of limit \eqref{limite} (whose proof can be found in \cite[Theorem 1.2]{fw12}) and of the Skorokhod embedding theorem, we have that for any $\psi \in\,C(\bar{\Gamma})$
\begin{equation}
\label{cf1000}
\lim_{\e\to 0}\,\sup_{t \in\,[0,T]}\,\le| \mathbb{E}_z\, \psi^\vee(Z^\e(t)-\bar{{\mathbb E}}_{\Pi(z)}\psi(\bar{Z}(t))\r|=0.
\end{equation}
Thus, if $\varphi^\wedge$ were continuous on $\bar{\Gamma}$, then \eqref{cf100} would follow from  \eqref{cf1000} . Unfortunately, if $\varphi \in\,C(\bar{G})$ in general $\varphi^\wedge$ is not continuous on $\bar{\Gamma}$, so that we cannot use \eqref{cf1000} directly and we have to use an approximation argument.

If $\varphi \in\,C(\bar{G})$, it is immediate to check that  $\varphi^\wedge$ is everywhere continuous but at the interior vertices of the graph $\Gamma$. However, for any $\d>0$ there exists $\psi_\d \in\,C(\bar{\Gamma})$ such that
\[\|\psi_\d\|_\infty\leq \|\varphi^\wedge\|_\infty,\ \ \ \ \ \ \psi_\delta\equiv \varphi^\wedge\ \ \ \text{on}\ \ \Pi(G(\d/2)).\]
In correspondence of each $\d>0$, we have
\[\begin{array}{l}
\ds{{\mathbb E}_z(\varphi^\wedge)^\vee(Z^\e(t))-\bar{{\mathbb E}}_{\Pi(z)}\varphi^\wedge(\bar{Z}(t))}\\
\vs
\ds{={\mathbb E}_z\le[(\varphi^\wedge)^\vee(Z^\e(t))-\psi_\d^\vee(Z^\e(t))\r]+\le[{\mathbb E}_z\,\psi_\d^\vee(Z^\e(t))-\bar{{\mathbb E}}_{\Pi(z)}\psi_\d(\bar{Z}(t))\r]}\\
\vs
\ds{+\bar{{\mathbb E}}_{\Pi(z)}\le[\psi_\d(\bar{Z}(t))-\varphi^\wedge(\bar{Z}(t))\r]=:I^{\e,\d}_1(t)+I_2^{\e,\d}(t)+I^\d(t).}
\end{array}\]
If we can show that for any $\d>0$ there exists some $\e_\d>0$ such that
\begin{equation}
\label{cf101}
\lim_{\d\to 0}\sup_{\e \in\,(0,\e_\d)}\,\sup_{t \in\,[\tau,T]}\,|I^{\e,\d}_1(t)|=0,
\end{equation}
and
\begin{equation}
\label{cf102}
\lim_{\d\to 0}\,\sup_{t \in\,[\tau,T]}\,|I^\d(t)|=0,
\end{equation}
then for any $\eta>0$ we can find $\d_\eta>0$ and $\e_\eta>0$ such that
\begin{equation}
\label{cf103}
\sup_{t \in\,[\tau,T]}\,\le|{\mathbb E}_z(\varphi^\wedge)^\vee(Z^\e(t))-\bar{{\mathbb E}}_{\Pi(z)}\varphi^\wedge(\bar{Z}(t))\r|\leq \eta+\,\sup_{t \in\,[\tau,T]}\le|I^{\e,\d_\eta}_2(t)\r|,\ \ \ \ \e \leq \e_\eta.\end{equation}
Since $\psi_\d \in\,C(\bar{\Gamma})$, due to \eqref{cf1000} we have
\[\lim_{\e\to 0}\,\sup_{t \in\,[\tau,T]}\,|I^{\e,\d_\eta}_2(t)|=0,\]
 and then, due to the arbitrariness of $\eta$, from \eqref{cf103} we can conclude that \eqref{cf100} holds.
 
 In order to prove \eqref{cf101}, we write
 \[\begin{array}{l}
 \ds{I^{\e,\d}_1(t)={\mathbb E}_z\le(\Delta_{\e,\d}(t)\,;\,t \in\,\bigcup_{n \in\,\nat}\le[\tau_n^{\e,\d,\d/2},\si_n^{\e,\d,\d/2}\r)\r)+{\mathbb E}_z\le(\Delta_{\e,\d}(t)\,;\,t \in\,\bigcup_{n \in\,\nat}\le[\si_n^{\e,\d,\d/2},\tau_{n+1}^{\e,\d,\d/2}\r)\r),}
 \end{array}\]
 where
 \[\Delta_{\e,\d}(t):=(\varphi^\wedge)^\vee(Z^\e(t))-\psi_\d^\vee(Z^\e(t)).\]
 Recalling that $\varphi^\wedge\equiv \psi_\d$ on $\Pi(G(\d/2))$, this yields 
\[\begin{array}{l}
\ds{I^{\e,\d}_1(t)={\mathbb E}_z\le(\Delta_{\e,\d}(t)\,;\,t \in\,\bigcup_{n \in\,\nat}\le[\tau_n^{\e,\d,\d/2},\si_n^{\e,\d,\d/2}\r)\r)}\\
\vs
\ds{=\sum_{n \in\,\nat}{\mathbb E}_z\le(\Delta_{\e,\d}(t)\,I_{\{\tau_n^{\e,\d,\d/2}\leq t\}}I_{\{ \si_n^{\e,\d,\d/2}>t\}}\r):=\sum_{n \in\,\nat} J^{\e,\d}_{1,n}(t).}
\end{array}\]
Due to the strong Markov property, 
\begin{equation}
\label{cf105}
\begin{array}{l}
\ds{|J^{\e,\d}_{1,n}(t)|\leq {\mathbb E}_z\le(I_{\{\tau_n^{\e,\d,\d/2}\leq t\}}\le|{\mathbb E}_{Z^\e(\tau_n^{\e,\d,\d/2})}\le(\Delta_{\e,\d}(t)\,;\, \si_0^{\e,\d,\d/2}>t\r)\r|\r)}\\
\vs
\ds{\leq {\mathbb P}_z\le(\tau_n^{\e,\d,\d/2}\leq t\r)\La_{\e,\d}(t)\leq e^t\, {\mathbb E}_z\le(e^{-\tau_n^{\e,\d,\d/2}}\r)\La_{\e,\d}(t),}
\end{array}\end{equation}
where
\[\La_{\e,\d}(t):=\sup_{z \in\,C(\d/2)}\le|{\mathbb E}_z\le(\Delta_{\e,\d}(t)\,;\, \si_0^{\e,\d,\d/2}>t\r)\r|.\]

Now, in \cite[Lemma 3.10]{fw12} it is proven that if $\tau^\e=\tau^\e(a_k+\d^\prime,b_k-\d^\prime)$ is the first time the process $X^\e(t)$ leaves the interval $(a_k+\d^\prime,b_k-\d^\prime)$, then for any $\la>0$
\[\lim_{\d\to 0}\frac 1\d \lim_{\d^\prime \to 0}\lim_{\e\to 0} \E_{(a_k+\d,y)}\le(1-\exp\le(-\la \tau^\e\r)\r)>0,\]
and
\[\lim_{\d\to 0}\frac 1\d \lim_{\d^\prime \to 0}\lim_{\e\to 0} \E_{(b_k-\d,y)}\le(1-\exp\le(-\la \tau^\e\r)\r)>0,\]
uniformly with respect to the points $(a_k+\d,y)$ and $(b_k-\d,y)$ in $\Pi^{-1}(I_k)$. This implies that there exist $0<\d_1^\prime<\d_1$, $\bar{\rho}<\d_1^{-1}$ and $\e_1>0$ such that for every $\e\leq \e_1$, $\d\leq \d_1$ and $0<\d^\prime<\d^\prime_1\wedge \d$  
\[\sup_{z \in\,C(\d) }\E_z\exp\le(-\tau^{\e,\d,\d^\prime}_1\r)\leq (1-\bar{\rho}\d).\]
Due to the strong Markov property, this yields
\begin{equation}
\label{strong}
\sup_{z \in\,G }\,\E_z\exp\le(-\tau^{\e,\d,\d^\prime}_n\r)\leq (1-\bar{\rho}\d)^n,
\end{equation}
for every $\e\leq \e_1$, $\d\leq\d_1$ and $0<\d^\prime<\d^\prime_1\wedge \d$. This implies in particular that
\begin{equation}
\label{cf110}
|I^{\e,\d}_{1}(t)|\leq e^t \La_{\e,\d}(t)\sum_{n \in\,\nat} (1-\bar{\rho}\,\d)^{n}\leq e^T \La_{\e,\d}(t)\,\frac 1{\d\bar{\rho}}, \ \ \ \ t \in\,[0,T].
\end{equation}

Moreover, in \cite[Lemma 6.2]{fw12}  it is proven that, for some $\d_2>0$ , for every $0<\d\leq\d_2$ there exists $\e_\d>0$ such that for all $\e \in\,(0,\e_\d]$ and all $z \in\,G(O_i,x_i-\d,x_i+\d)$, with $i=1,\ldots,N$, we have
\begin{equation}
\label{cf300}
\E_z \tau^\e(G(O_i,x_i-\d,x_i+\d))\leq 5\,\d^2,\end{equation}
where $\tau^\e(G(O_i,x_i-\d,x_i+\d))$ is the first exit time of $Z^\e_z$ from $G(O_i,x_i-\d,x_i+\d)$.

Therefore, if we set $\bar{\d}:=\d_1\wedge \d_2$, then for any $\d\leq \bar{\d}$, there exists $\e_\d>0$  such that
\[\La_{\e,\d}(t)\leq \frac 2 t\|\varphi^\wedge\|_\infty \sup_{z \in\,G(O_i,x_i-\d,x_i+\d)} {\mathbb E}_z\si_0^{\e,\d,\d/2}\leq \frac {c}{\tau}\|\varphi\|_\infty \d^2,\ \ \ \ t \in\,[\tau,T].\]
This, together with \eqref{cf110}, implies \eqref{cf101}.

Now, for any $\d>$ and $t\geq 0$, we have
\[|I^\d(t)|\leq 2\|\varphi\|_\infty\,\bar{\mathbb{P}}_{\Pi(z)}\le(\bar{Z}(t) \in\,\Pi(G(\d/2))^c\r)\leq 2\|\varphi\|_\infty\,\bar{\mathbb{E}}_{\Pi(z)}\le(f_\d(\bar{Z}(t))\r),\]
for some $f_\d \in\,C(\bar{\Gamma})$ such that 
$I_{\Pi(G(\d/2))^c}\leq f_\d\leq 1$ and $f_\d\equiv 0$ on $\Pi(G(\d))$.
This yields
\[|I^\d(t)|\leq 2\|\varphi\|_\infty\le|\bar{\mathbb{E}}_{\Pi(z)}f_\d(\bar{Z}(t))-\mathbb{E}_{z}f_\d^\vee(Z_\e(t))\r|+2\,\|\varphi\|_\infty\,\le|\mathbb{E}_{z}f_\d^\vee(Z_\e(t))\r|.\]
According to \eqref{cf101}, for any $\eta>0$ there exists $\d_\eta, \e_\eta >0$ such that
\[\sup_{t \in\,[\tau,T]}\le|\mathbb{E}_{z}f_\d^\vee(Z_\e(t))\r|<\eta,\ \ \ \ \e\leq \e_\eta.\]
Then, for any $\e\leq \e_\eta$
\[\sup_{t \in\,[\tau,T]}|I^\d(t)|\leq 2\|\varphi\|_\infty\,\eta+2\|\varphi\|_\infty\,\sup_{t \in\,[\tau,T]}\le|\bar{\mathbb{E}}_{\Pi(z)}f_\d(\bar{Z}(t))-\mathbb{E}_{z}f_\d^\vee(Z_\e(t))\r|,\]
so that, due to \eqref{cf1000} and the arbitrariness of $\eta$, we get \eqref{cf102}.

\end{proof}

Now we can prove the main result of this section.

\begin{Theorem}
\label{theorem4.1}
If the domain $G$ satisfies assumptions I-IV, then for any $\varphi \in\,C(\bar{G})$ and $z \in\,G$  and for any $0\leq \tau\leq T$ we have
\begin{equation}
\label{funda}
\lim_{\e\to 0}\,\sup_{t \in\,[\tau,T]}\,\le|\E_z\,\varphi(Z^\e(t))-\bar{\E}_{\Pi(z)}\,\varphi^\wedge(\bar{Z}(t))\r|=0.
\end{equation}

\end{Theorem}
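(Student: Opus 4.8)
The plan is to deduce \eqref{funda} from Lemma \ref{lemf} by controlling the discrepancy between $\varphi(Z^\e(t))$ and $(\varphi^\wedge)^\vee(Z^\e(t))$, i.e.\ between $\varphi$ evaluated at a point of $G$ and the average of $\varphi$ over the vertical section through that point. Concretely, I would write
\[
\E_z\,\varphi(Z^\e(t))-\bar{\E}_{\Pi(z)}\,\varphi^\wedge(\bar{Z}(t))
=\E_z\le[\varphi(Z^\e(t))-(\varphi^\wedge)^\vee(Z^\e(t))\r]
+\le[\E_z(\varphi^\wedge)^\vee(Z^\e(t))-\bar{\E}_{\Pi(z)}\varphi^\wedge(\bar{Z}(t))\r].
\]
The second bracket tends to zero uniformly on $[\tau,T]$ by Lemma \ref{lemf}, so everything reduces to showing
\begin{equation}
\label{planeq}
\lim_{\e\to 0}\sup_{t\in[\tau,T]}\le|\E_z\le[\varphi(Z^\e(t))-(\varphi^\wedge)^\vee(Z^\e(t))\r]\r|=0.
\end{equation}
This is precisely an averaging statement: for $0<\e\ll 1$ the fast component $Y^\e(t)$ is, on the time scale over which $X^\e$ barely moves, close to equidistributed on the section $C(X^\e(t))$ (its invariant density), and $(\varphi^\wedge)^\vee(Z^\e(t))$ is exactly the integral of $\varphi$ against that density. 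So $\varphi(Z^\e(t))-(\varphi^\wedge)^\vee(Z^\e(t))$ should average out.

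To make this rigorous I would use the localization-in-time device already invoked in the paper, comparing $Z^\e$ with the frozen-slow-component process $Z^{\e,\gamma_\e}$ of Section \ref{app1}. First, by Theorem \ref{lim-z-eps}, $\sup_{z}\sup_{t\in[0,T]}\E_z|Z^\e(t)-Z^{\e,\gamma_\e}(t)|^2\to 0$, and since $\varphi\in C(\bar G)$ is uniformly continuous (and $\bar G$ compact), we may replace $Z^\e(t)$ by $Z^{\e,\gamma_\e}(t)$ in \eqref{planeq} up to a vanishing error; the same replacement works inside $(\varphi^\wedge)^\vee$ because $\varphi^\wedge$, restricted to a fixed neighborhood of any interior edge, is uniformly continuous, and the contribution near the interior vertices is handled exactly as in the proof of Lemma \ref{lemf} via the exit-time bound \eqref{cf300} together with the renewal estimate \eqref{strong} (the process spends only an $O(\d^2)$-fraction of time in the $\d$-neighborhoods of the vertices). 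Second, on each block $[k\gamma_\e,(k+1)\gamma_\e)$ the process $Z^{\e,\gamma_\e}$ has a fixed slow coordinate $X^\e(k\gamma_\e)$, and $Y^{\e,\gamma_\e}$ is (after the time change $t\mapsto t/\e^2$ used in Lemma \ref{lemmaA.2}) a reflected Brownian motion on a fixed interval run for time $\gamma_\e/\e^2=\log\e^{-\kappa_1}\to\infty$. For a one-dimensional reflected Brownian motion on an interval, the transition density converges exponentially fast to the uniform density, so
\[
\le|\E\le(\varphi(X^\e(k\gamma_\e),Y^{\e,\gamma_\e}(t))-(\varphi^\wedge)^\vee(X^\e(k\gamma_\e),Y^{\e,\gamma_\e}(t))\,\big|\,\F_{k\gamma_\e}\r)\r|
\le C\,\omega_\varphi\!\le(e^{-c\,(t-k\gamma_\e)/\e^2}\r)+ (\text{small}),
\]
where $\omega_\varphi$ is the modulus of continuity of $\varphi$. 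Summing over the $O(T/\gamma_\e)$ blocks and integrating in $t$ shows that the block-by-block contributions are negligible except on the initial sliver of each block, whose total length is $O(T\e^2/\gamma_\e)\to 0$; combined with the uniform bound $2\|\varphi\|_\infty$ on that sliver this gives \eqref{planeq}.

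The main obstacle is the interface between the averaging argument and the graph structure: $(\varphi^\wedge)^\vee$ is discontinuous at the interior vertices, so none of the continuity-based comparisons (replacing $Z^\e$ by $Z^{\e,\gamma_\e}$, or invoking the exponential mixing of reflected Brownian motion on a \emph{fixed} interval) are valid there, and when $X^\e(k\gamma_\e)$ sits near such a vertex the section $C(X^\e(k\gamma_\e))$ degenerates or splits. This forces a separate treatment of the set of times at which $Z^\e(t)$ lies in a $\d$-neighborhood of an interior vertex, controlled uniformly in $\e$ by the stopping-time machinery $\{\si_n^{\e,\d,\d'},\tau_n^{\e,\d,\d'}\}$ introduced before Lemma \ref{lemf} — that is, one proves that this occupation time is $O(\d^2)$ and then sends $\d\to 0$ after $\e\to 0$, exactly in the spirit of the proof of \eqref{cf101}. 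Once that localization is in place, the remaining estimates are the routine exponential-mixing and block-summation computations sketched above.
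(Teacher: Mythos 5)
Your proposal follows essentially the same route as the paper: reduce \eqref{funda} to the averaging statement \eqref{cf6} via Lemma \ref{lemf}; compare $Z^\e$ with the frozen process $Z^{\e,\gamma_\e}$ of Section \ref{app1} using Theorem \ref{lim-z-eps}; isolate the neighborhoods of the interior vertices (where $(\varphi^\wedge)^\vee$ is discontinuous) with the stopping-time machinery together with the exit-time bound \eqref{cf300} and the renewal estimate \eqref{strong}; and conclude on each block by the spectral-gap relaxation of reflected Brownian motion on the frozen cross-section over the rescaled time $\gamma_\e/\e^2\to\infty$. These are exactly the ingredients of the paper's two-step proof.

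The one step that does not work as written is the final block-summation. You show that the set of times $t$ at which the conditional mixing bound is unavailable (the ``initial sliver'' of each block) has total Lebesgue measure $O(T\e^2/\gamma_\e)\to 0$, and on that set you retain only the crude bound $2\|\varphi\|_\infty$. This yields an $L^1$-in-$t$ estimate, whereas \eqref{cf6} is a supremum over $t\in[\tau,T]$: for every fixed $\e$ there remain times $t$ inside a sliver, and at those times your bound does not vanish, so the supremum is not controlled. The paper sidesteps this by letting the partition depend on the time under consideration: for each fixed $t$ it chooses the grid so that $t=\le(k^\e_t+\tfrac12\r)\gamma_\e$, i.e.\ $t$ sits at distance $\gamma_\e/2$ from the start of its block, so that the relaxation time $\gamma_\e/(2\e^2)=\tfrac12\log\e^{-\kappa_1}$ is available uniformly in $t\geq\tau$ (one also needs $\gamma_\e<\tau$ so that $k^\e_t\geq 1$). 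With this modification, which affects only the bookkeeping and not the estimates, your argument coincides with the paper's.
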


\begin{proof} In Lemma \ref{lemf} we have proven that for any $t>0$ and $z \in\,G$
\[\lim_{\e\to 0}\,\sup_{t \in\,[\tau,T]}\,\le|\E_z\,(\varphi^\wedge)^\vee(Z^\e(t))-\bar{\E}_{\Pi(z)}\,\varphi^\wedge(\bar{Z}(t))\r|=0.\]
Thus, in order to prove \eqref{funda}, it is sufficient to show that
\begin{equation}
\label{cf6}
\lim_{\e\to 0}\,\sup_{t \in\,[\tau,T]}\,\le|\E_z\,\le(\varphi(Z^\e(t))-(\varphi^\wedge)^\vee(Z^\e(t))\r)\r|=0.
\end{equation}

In what follows we can assume that $\varphi \in\,\text{Lip}(\bar{G})$. Actually, for any $\varphi \in\,C(\bar{G})$ there exists $\{\varphi_n\}_{n \in\,\nat}\subset \text{Lip}(\bar{G})$ such that
\[\lim_{n\to \infty}\|\varphi_n-\varphi\|_\infty=0.\]
As this implies
\[\lim_{n\to \infty}\|(\varphi_n^\wedge)^\vee-(\varphi^\vee)^\wedge\|_\infty=0,\]
we obtain that 
\[\lim_{n\to\infty}\,\sup_{t \in\,[\tau,T]}\le(\le|\E_z\,\le(\varphi(Z^\e(t))-\varphi_n(Z^\e(t))\r)\r|+\le|\E_z\,\le((\varphi^\wedge)^\vee(Z^\e(t))-(\varphi_n^\wedge)^\vee(Z^\e(t))\r)\r|\r)=0,\]
uniformly with respect to $\e>0$ and $t\geq 0$.
Hence, for any $\eta>0$ there exists $n_\eta \in\,\nat$ such that
\[\sup_{t \in\,[\tau,T]}\le|\E_z\,\le(\varphi(Z^\e(t))-(\varphi^\wedge)^\vee(Z^\e(t))\r)\r|\,\leq \eta+\sup_{t \in\,[\tau,T]}\le|\E_z\,\le(\varphi_{n_\eta}(Z^\e(t))-(\varphi_{n_\eta}^\wedge)^\vee(Z^\e(t))\r)\r|.\]

For any fixed $t>0$, we can assume that the partition introduced in \eqref{discrete} and in the proofs of Lemma \ref{lemmaA.2} and  of Theorem \ref{lim-z-eps}, where we have defined the approximating process $\hat{Z}^{\e}=\hat{Z}^{\e,\gamma_\e}$, is such that 
\[t=\le(k^\e_t+\frac 12\r)\gamma_\e,\]
where $\gamma_\e$ is the positive constant defined in Theorem \ref{lim-z-eps} and $k^\e_t \in\,\nat$. Notice that we can take $\e>0$ small enough so that 
$\gamma_\e<\tau$ and hence, as $t\geq \tau$, \begin{equation}
\label{cf9}
\frac t2<k^\e_t\gamma_\e<t.\end{equation}
Moreover, with the notations introduced in Section \ref{app1}, we have
\[\hat{Z}^\e(t)=(X^\e(k^\e_t\gamma_\e),Y^{\e,\gamma_\e}(t)),\]
and, because of the way $Y^{\e,\gamma_\e}(t)$ has been defined, we have 
\[Z^\e(k^\e_t\gamma_\e)=(X^\e(k^\e_t\gamma_\e),Y^\e(k^\e_t\gamma_\e)) \in\,G(\d)\Longrightarrow \hat{Z}^\e(t) \in\,G(\d).\]

In the proof of \eqref{cf6} we will proceed in two steps.

\medskip

{\em Step 1.} We show that for some  $0<\bar{\d}^\prime<\bar{\d}$ and any $\d\leq \bar{\d}$ and $\d^\prime < \bar{\d}^\prime\wedge \d$ there exists $\e_\d>0$ such  that for any  $z \in\,G$ and $\e\leq \e_\d$ it holds
\begin{equation}
\label{step1}
\le|\E_z\,\le(\varphi(Z^\e(t))-(\varphi^\wedge)^\vee(Z^\e(t))\r)\r|\leq c\,e^{T}\le(\frac{\|\varphi\|_\infty}{\tau}\,\d+\frac 1\d\, L^\e_{\d,\d^\prime}(t)\r),\ \ \ \ t \in\,[\tau,T],
\end{equation}
where
\begin{equation}
\label{leps}
L^\e_{\d,\d^\prime}(t):=\sup_{z \in\,C(\d)}\le|\E_z\,\le(\varphi(Z^\e(t))-(\varphi^\wedge)^\vee(Z^\e(t))\,;\,\tau^{\e,\d,\d^\prime}_1>k^\e_t\gamma_\e \r)\r|.\end{equation}

\smallskip

If we define
\[\Delta_\e(t):=\varphi(Z^\e(t))-(\varphi^\wedge)^\vee(Z^\e(t)),\]
 for any $0<\d^\prime<\d$ and $\e>0$ we have
\[\begin{array}{l}
\ds{\E_z\,\le(\varphi(Z^\e(t))-(\varphi^\wedge)^\vee(Z^\e(t))\r)}\\
\vs
\ds{=\E_z\le(\Delta_\e(t)\,;\,k^\e_t\gamma_\e \in\, \bigcup_{n \in\,\nat}[\tau^{\e,\d,\d^\prime}_n,\si^{\e,\d,\d^\prime}_n)\r)+\E_z\le(\Delta_\e(t)\,;\,k^\e_t\gamma_\e \in\, \bigcup_{n \in\,\nat}[\si^{\e,\d,\d^\prime}_n,\tau^{\e,\d,\d^\prime}_{n+1})\r)}\\
\vs
\ds{= \sum_{n \in\,\nat}\E_z\le(I_{\{\tau^{\e,\d,\d^\prime}_n\leq k^\e_t\gamma_\e\}}\,I_{\{\si^{\e,\d,\d^\prime}_n> k^\e_t\gamma_\e\}}\Delta_\e(t)\r)+\sum_{n \in\,\nat}\E_z\le(I_{\{\si^{\e,\d,\d^\prime}_n\leq k^\e_t\gamma_\e\}}\,I_{\{\tau^{\e,\d,\d^\prime}_{n+1}> k^\e_t\gamma_\e\}}\Delta_\e(t)
\r)}\\
\vs
\ds{=:\sum_{n \in\,\nat} J^{\e, \d,  \d^\prime}_{1,n}(t)+\sum_{n \in\,\nat}J^{\e, \d,  \d^\prime}_{2,n}(t).}
\end{array}\]

As a consequence of the strong Markov property, for each $n \in\,\nat$ we have
\begin{equation}
\label{j1}
J^{\e, \d,  \d^\prime}_{1,n}(t)=\E_z\le(I_{\{\tau^{\e,\d,\d^\prime}_n\leq k^\e_t\gamma_\e\}}\,\E_{Z^\e(\tau^{\e,\d,\d^\prime}_n)}\le(\Delta_\e(t)\,;\,\si^{\e,\d,\d^\prime}_0>k^\e_t\gamma_\e\r)\r).
\end{equation}
Hence, thanks to \eqref{cf300},  for any $\d>0$ sufficiently small there exists $\e_\d>0$ such that for any $\e \in\,(0,\e_\d)$ 
\[\begin{array}{l}
\ds{\le|\E_{Z^\e(\tau^{\e,\d,\d^\prime}_n)}\le(\Delta_\e(t)\,;\,\si^{\e,\d,\d^\prime}_0>k^\e_t\gamma_\e\r)\r|\leq \frac{2\,\|\varphi\|_\infty}{k^\e_t\gamma_\e}\sup_{z \in\,G(O_i,x_i-\d,x_i+\d)}\E_z\,\si^{\e,\d,\d^\prime}_0\leq \frac{10\,\|\varphi\|_\infty}{k^\e_t\gamma_\e}\,\d^2.}
\end{array}\]
Therefore, due to \eqref{j1}, we have
\begin{equation}
\label{j1-1}
|J^{\e, \d,  \d^\prime}_{1,n}(t)|\leq \frac{10\,\|\varphi\|_\infty}{k^\e_t\gamma_\e}\,\d^2\,{\mathbb{P}}_z(\tau^{\e,\d,\d^\prime}_n\leq k^\e_t\gamma_\e)\leq \frac{10\,\|\varphi\|_\infty\,e^{k^\e_t\gamma_\e}}{k^\e_t\gamma_\e}\,\d^2\,\E_z\exp\le(-\tau^{\e,\d,\d^\prime}_n\r).
\end{equation}

As in the proof of Lemma \ref{lemf}, according to \eqref{j1-1}   and \eqref{cf9}, estimate \eqref{strong} implies that for any $0<\d^\prime<\d$ sufficiently small, there exists $\e_\d>0$  such that for any $\e\leq \e_\d$ 
\begin{equation}
\label{ji-2}
\sum_{n \in\,\nat}\,|J^{\e, \d,  \d^\prime}_{1,n}(t)|\leq \frac{c\,\|\varphi\|_\infty\,e^{k^\e_t \gamma_\e}}{k^\e_t \gamma_\e}\,\d^2\sum_{n \in\,\nat}(1-\bar{\rho}\d)^n\leq \frac{c\,\|\varphi\|_\infty\,e^t}{t}\,\frac{\d}{\bar{\rho}}\leq c\,\|\varphi\|_\infty \frac{e^T}{\tau}\,\d.
\end{equation}

Now, let us study $J^{\e,2}_n(t)$. From the strong Markov property, we have
\[\begin{array}{l}
\ds{|J^{\e, \d,  \d^\prime}_{2,n}(t)|\leq \E_z\le(I_{\{\si^{\e,\d,\d^\prime}_n\leq  k^\e_t \gamma_\e\}}\,\le|\E_{Z^\e(\si^{\e,\d,\d^\prime}_n)}\le(\Delta_\e(t)\,;\,\tau^{\e,\d,\d^\prime}_1>k^\e_t \gamma_\e \r)\r|\r)}\\
\vs
\ds{\leq \Pro_z\le(\si^{\e,\d,\d^\prime}_n\leq k^\e_t \gamma_\e\r)   L^{\e}_{\d,\d^\prime}\leq e^t\,\E_z\,\exp\le(-\tau^{\e,\d,\d^\prime}_n\r)\,L^{\e}_{\d,\d^\prime}(t),}
\end{array}\]
where $L^\e_{\d,\d^\prime}(t)$ is the function defined in \eqref{leps}.
Hence, thanks to \eqref{strong}, we get
\[\sum_{n \in\,\nat}\,|J^{\e, \d,  \d^\prime}_{2,n}(t)|\leq \sum_{n \in\,\nat}(1-\bar{\rho}\d)^n\,e^T\,L^\e_{\d,\d^\prime}(t)\leq \frac{e^T}{\d \bar{\rho}}\,L^\e_{\d,\d^\prime}(t),\]
for every $\e\leq \e_1$, $\d\leq\d_1$ and $0<\d^\prime<\d^\prime_1\wedge \d$. This, together with \eqref{ji-2}, implies \eqref{step1}.

\medskip

{\em Step 2.}  For any $0<\d^\prime<\d$, it holds
\begin{equation}
\label{step2}
\lim_{\e\to 0} \,\sup_{t \in\,[\tau,T]}\,L^\e_{\d,\d^\prime}(t)=0.
\end{equation}

\smallskip

For any $\e>0$ we have
\[\begin{array}{l}
\ds{\varphi(Z^\e(t))-(\varphi^\wedge)^\vee(Z^\e(t))=\le[\varphi(Z^\e(t))-\varphi(\hat{Z}^\e(t))\r]}\\
\vs
\ds{+\le[\varphi(\hat{Z}^\e(t))-(\varphi^\wedge)^\vee(\hat{Z}^\e(t))\r]+\le[(\varphi^\wedge)^\vee(\hat{Z}^\e(t))-(\varphi^\wedge)^\vee(Z^\e(t))\r]=:\sum_{i=1}^3I^\e_i(t).}
\end{array}\]

If we denote by $h_z$ the integer such that $z \in\,\Pi^{-1}(I_{h_z})$, we have
\[\begin{array}{l}
\ds{\E_z\le(|I^\e_1(t)|\ ;\  \tau^{\e,\d,\d^\prime}_1>k^\e_t \gamma_\e \r)=\E_z\le(|I^\e_1(t)|\ ;\ \tau^{\e,\d,\d^\prime}_1>k^\e_t \gamma_\e\, ,\, \tau^{\e,\d,\d^\prime/2}_1>(k^\e_t+1/2) \gamma_\e \r)}\\
\vs
\ds{+\E_z\le(|I^\e_1(t)|\ ;\  \tau^{\e,\d,\d^\prime}_1>k^\e_t \gamma_\e\, ,\, \tau^{\e,\d,\d^\prime/2}_1\leq(k^\e_t+1/2) \gamma_\e \r)}\\
\vs
\ds{\leq \E_z\le(|I^\e_1(t)|\ ;\ Z^\e(t),\ \hat{Z}^\e(t) \in\,G_{h_z}(a_{k_z}+\d^\prime/2,b_{h_z}-\d^\prime/2)\r)}\\
\vs
\ds{+2\,\|\varphi\|_\infty\Pro_z\le(\tau^{\e,\d,\d^\prime}_1>k^\e_t \gamma_\e\, ,\, \tau^{\e,\d,\d^\prime/2}_1\leq (k^\e_t+1/2) \gamma_\e \r).}
\end{array}\]
Now, according to Theorem \ref{lim-z-eps}, since we are assuming $\varphi \in\,\text{Lip}(\bar{G})$, we have that
\begin{equation}
\label{cf10}
\lim_{\e\to 0}\sup_{z \in\,C(\d)}\,\sup_{t \in\,[\tau,T]}\,\E_z\le(|I^\e_1(t)|\ ;\ Z^\e(t),\ \hat{Z}^\e(t) \in\,G_{h_z}(a_{h_z}+\d^\prime/2,b_{h_z}-\d^\prime/2)\r)=0.\end{equation}
Moreover,
\[\begin{array}{l}
\ds{\Pro_z\le(\tau^{\e,\d,\d^\prime}_1> k^\e_t \gamma_\e\, ,\, \tau^{\e,\d,\d^\prime/2}_1\leq (k^\e_t+1/2) \gamma_\e \r)}\\
\vs
\ds{\leq \Pro_z\le(|X^\e(\tau_1^{\e,\d,\d^\prime/2})-X^\e (k^\e_t \gamma_\e)|\geq \d^\prime/2\, ,\, \tau^{\e,\d,\d^\prime}_1> k^\e_t \gamma_\e\, ,\, \tau^{\e,\d,\d^\prime/2}_1\leq(k^\e_t+1/2) \gamma_\e \r)}\\
\vs
\ds{\leq \Pro_z\le(|X^\e(t\wedge \tau_1^{\e,\d,\d^\prime/2})-X^\e (k^\e_t \gamma_\e\wedge \tau_1^{\e,\d,\d^\prime/2} )|\geq \d^\prime/2\r)}\\
\vs
\ds{\leq \le(\frac 2{\d^\prime}\r)^2\E_z\,|X^\e(t\wedge \tau_1^{\e,\d,\d^\prime/2})-X^\e (k^\e_t \gamma_\e\wedge \tau_1^{\e,\d,\d^\prime/2} )|^2.}
\end{array}\]
Since 
\[\begin{array}{l}
\ds{X^\e(t\wedge \tau_1^{\e,\d,\d^\prime/2})-X^\e (k^\e_t \gamma_\e\wedge \tau_1^{\e,\d,\d^\prime/2} )}\\
\vs
\ds{=B_1(t\wedge \tau_1^{\e,\d,\d^\prime/2})-B_1(k^\e_t \gamma_\e\wedge \tau_1^{\e,\d,\d^\prime/2} )+\int_{k^\e_t \gamma_\e\wedge \tau_1^{\e,\d,\d^\prime/2}}^{t\wedge \tau_1^{\e,\d,\d^\prime/2}}\nu_1(X^\e(s),Y^\e(s))\,d\phi^\e(s),}
\end{array}\]
from \eqref{phi} we get
\[\E|X^\e(t\wedge \tau_1^{\e,\d,\d^\prime/2})-X^\e (k^\e_t \gamma_\e\wedge \tau_1^{\e,\d,\d^\prime/2} )|^2\leq c\,\gamma_\e,\]
so that
\[\Pro_z\le(\tau^{\e,\d,\d^\prime}_1> k^\e_t \gamma_\e\, ,\, \tau^{\e,\d,\d^\prime/2}_1\leq(k^\e_t+1/2) \gamma_\e \r)\leq c\,\frac {\gamma_\e}{(\d^\prime)^2}.\]
This, together with \eqref{cf10},
implies  that
\begin{equation}
\label{I1}
\lim_{\e\to 0}\sup_{z \in\,C(\d)}\,\sup_{t \in\,[\tau,T]}\,\E_z\le(|I^\e_1(t)|\,;\,\tau^{\e,\d,\d^\prime}_1>k^\e_t \gamma_\e\r)=0.
\end{equation}
As $(\varphi^\wedge)^\vee$ is continuous in  $G(\d)$, for any $\d>0$, we can repeat the same arguments used for $I^\e_1(t)$ to prove that 
\begin{equation}
\label{cf11}
\lim_{\e\to 0}\sup_{z \in\,C(\d)}\,\sup_{t \in\,[\tau,T]}\,\E_z\le(|I^\e_3(t)|\,;\,\tau^{\e,\d,\d^\prime}_1>k^\e_t \gamma_\e\r)=0.
\end{equation}

Now, it remains  to study 
\[\E_z\le(I^\e_2(t)\,;\,\tau^{\e,\d,\d^\prime}_1>k^\e_t \gamma_\e\r).\]
As a consequence of the Markov property, we have
\[\E_z\le(I^\e_2(t)\,;\,\tau^{\e,\d,\d^\prime}_1>k^\e_t \gamma_\e\r)=\E_z\le(\psi(\gamma_\e/2,Z^\e(k^\e_t \gamma_\e))\,;\,\tau^{\e,\d,\d^\prime}_1>k^\e_t \gamma_\e\r),\]
where
\[\psi(s,x,y)=\E_{(x,y)}\varphi(x,Y^\e_{1,(x,y)}(s))-\varphi^\wedge(x,h_{(x,y)}),\]
 $h_{(x,y)}$ is the integer defined by $(x,y) \in\,\Pi^{-1}(I_{h_{(x,y)}})$, and $Y^\e_{1,(x,y)}(s)$ is the process defined, as in \eqref{cf12}, by the equation
\[dY^\e_{1,(x,y)}(s)=\frac 1{\e}\,dB(s)+\frac 1{\e^2}\,\nu_2(x,Y^\e_{1,(x,y)}(s))\,d\phi^\e_1(s),\ \ \ \ s \in\,[0,\gamma_\e),\ \ \ \ Y^\e_{1,(x,y)}(0)=y.\]
Now, by proceeding as in the proof of Lemma \ref{lemmaA.2}, we have that
\[\psi(s,x,y)=\tilde{\E}_{(x,y)}\varphi(x,Y_{2,(x,y)}(s/\e^2))-\varphi^\wedge(x,h_{(x,y)}),\]
where $Y_{2,(x,y)}(s)$ is defined, as in \eqref{cf13}, by the equation
\[dY_{2,(x,y)}(s)=d\tilde{B}(s)+\nu_2(x,Y_{2,(x,y)}(s))\,d\phi_2(t),\ \ \ \ s \in\,[0,\gamma_\e/\e^2),\ \ \ \ \ Y_{2,(x,y)}(0)=y.\]

Notice that, due to our assumptions on the domain $G$,  for any $\d>0$ and $k=1,\ldots,N$,
\[G_{k}(a_k+\d,b_k-\d)=\{(x,y) \in\,\reals^2\,:\,h_{1,k}(x)\leq y\leq h_{2,k}(x),\ x \in\,(a_k+\d,b_k-\d)\},\]
for some smooth functions $h_{1,k}(x)$ and $h_{2,k}(x)$, and, if
$l_k(x)$ is the length of the cross-section  $C_k(x)=\{(x,y) \in\,G_{k}(a_k+\d,b_k-\d)\}$, we have
\[\inf_{x \in\,(a_k+\d,b_k-\d)}l_k(x)=\inf_{x \in\,(a_k+\d,b_k-\d)}(h_{2,k}(x)-h_{1,k}(x))=:l_{k,\d}>0.\]
For any $(x,y) \in\,G_{k}(a_k+\d,b_k-\d)$, the process $Y_{2,(x,y)}(s)$ lives in the interval $[h_{1,k}(x),h_{2,k}(x)]$. Because of the way the process $Y_{2,(x,y)}(s)$ has been defined, for any $f \in\,C([h_{1,k}(x),h_{2,k}(x)])$ we have
 \[\tilde{\E}_{(x,y)}f(Y_{2,(x,y)}(s))=\sum_{j=0}^\infty e^{-s\a_{k,j}(x)}\le<f,e^x_{k,j}\r>e^x_{k,j},\ \ \ y \in\,[h_{1,k}(x),h_{2,k}(x)],\]
 where
 \[e^x_{k,0}(y)=\frac 1{\sqrt{l_k(x)}},\ \ \ e^x_{k,j}(y)=\sqrt{\frac{2}{l_k(x)}}\cos\le(\frac{j\pi}{l_k(x)}(y-h_{1,k}(x))\r),\ \ \ j=1,2,\ldots,\]
 and
 \[\a_{k,0}(x)=0,\ \ \ \a_{k,j}(x)=-\le(\frac{j\pi}{l_k(x)}\r)^2,\ \ \ j=1,2,\ldots.\]
Recalling how $\varphi^\wedge$ is defined, this implies
\[\psi(s,x,y)= \sum_{j=1}^\infty e^{-\frac{s}{\e^2}\a_{k,j}(x)}\le<\varphi(x,\cdot),e^x_{k,j}\r>e^x_{k,j}(y),\]
so that for any $rho>1/2$ we have
\[\begin{array}{l}
\ds{|\psi(s,x,y)|^2\leq \frac{2}{l_k(x)}\sum_{j=1}^\infty e^{-\frac{2s}{\e^2}\a_{k,j}(x)}\,\sum_{j=1}^\infty \le|\le<\varphi(x,\cdot),e^x_{k,j}\r>\r|^2}\\
\vs
\ds{\leq c_\rho\,\|\varphi\|^2_{\infty} \le(\frac s{\e^2}\r)^{-\rho}\sum_{j=1}^\infty \a_{k,j}(x)^{-\rho}=c_\rho\le(\frac{l_{h_{(x,y)}}(x)}{\pi}\r)^\rho\,\|\varphi\|^2_{\infty} \le(\frac s{\e^2}\r)^{-\rho}\sum_{j=1}^\infty  k^{-2\rho}\leq c_\rho \,\|\varphi\|_\infty^2\,\le(\frac s{\e^2}\r)^{-\rho}.}
\end{array}\]
Therefore, we can can conclude that
\[\begin{array}{l}
\ds{\le|\E_z\le(I^\e_2(t)\,;\,\tau^{\e,\d,\d^\prime}_1>k^\e_t \gamma_\e\r)\r|\leq \E_z\le(\le|\psi(\gamma_\e/2,Z^\e(k^\e_t \gamma_\e))\r|\,;\,\tau^{\e,\d,\d^\prime}_1>k^\e_t \gamma_\e\r)}\\
\vs
\ds{\leq c_\rho \,\|\varphi\|_\infty\,\le(\frac {\e^2}{\gamma_\e}\r)^{\frac{\rho}2}.}
\end{array}\]
As we are assuming that $\e^2/\gamma_\e\to 0$, as $\e\to 0$ (see Theorem \ref{lim-z-eps}), we can conclude that 
\[\lim_{\e\to 0}\,\sup_{t \in\,[\tau,T]}\, \E_z\le(I^\e_2(t)\,;\,\tau^{\e,\d,\d^\prime}_1>k^\e_t \gamma_\e\r)=0,\] and, together with 
\eqref{I1} and \eqref{cf11}, this yields \eqref{step2}.

\medskip

{\em Conclusion.} Due to \eqref{step1}, for any $\eta>0$ we can fix $0<\d^\prime_\eta<\d_\eta$ and $\e_\eta>0$ such that for any $\e\leq \e_{\eta}$
\[\le|\E_z\,\le[\varphi(Z^\e(t))-(\varphi^\wedge)^\vee(Z^\e(t))\r]\r|\leq \eta+\frac 1{\d_\eta}\, L^\e_{\d_\eta,\d^\prime_\eta}(t),\ \ \ \ t \in\,[\tau,T].\]
Thus, according to \eqref{step2}, due to the arbitrariness of $\eta$ we can conclude that  \eqref{cf6} holds true,
and \eqref{funda} follows.

\end{proof}

In Sections \ref{sec1} and \ref{sec2}, we have introduced the semigroups $\bar{S}(t)$ and $S_\e(t)$, associated respectively with the operators $\bar{L}$ and $L_\e$. With these notations, as a consequence of \eqref{limite}, we have that for any $f \in\,C(\bar{\Gamma})$, $z \in\,G$ and $t>0$
\begin{equation}
\label{cf205}
\lim_{\e\to 0}S_\e(t) f^\vee (z)=(\bar{S}(t)f)^\vee(z).
\end{equation}
Now, in view of Theorem \ref{theorem4.1} we get also the following limit result.
\begin{Corollary}
Under Hypotheses I-IV for the domain $G$, for any $0\leq \tau<T$, $\varphi \in\,C(\bar{G})$ and $z \in\,G$, we have
\begin{equation}
\label{fundacor}
\lim_{\e\to 0}\,\sup_{t \in\,[\tau,T]}\,|S_\e(t)\varphi(z)-\bar{S}(t)^\vee\varphi(z)|=0.
\end{equation}
Moreover, for any $\varphi \in\,H$
\begin{equation}
\label{cf18}
\lim_{\e\to 0}\,\sup_{t \in\,[\tau,T]}\,|S_\e(t)\varphi-\bar{S}(t)^\vee\varphi|_H=\lim_{\e\to 0}\,\sup_{t \in\,[\tau,T]}\,|(S_\e(t)\varphi)^\wedge-\bar{S}(t)\varphi^\wedge|_{\bar{H}}=0.
\end{equation}
\end{Corollary}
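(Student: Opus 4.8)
The plan is to derive both limits in the Corollary from Theorem \ref{theorem4.1} together with the elementary properties of the maps $(\cdot)^\wedge$ and $(\cdot)^\vee$ established in Section \ref{sec2}; no further probabilistic estimate is needed. First I would observe that \eqref{fundacor} is merely a rephrasing of Theorem \ref{theorem4.1}: by the representation of $S_\e(t)$ recalled in Section \ref{sec4} one has $S_\e(t)\varphi(z)=\E_z\,\varphi(Z^\e(t))$, while by definition \eqref{cf36} of $A^\vee$ and of $\bar S(t)$ one has $\bar S(t)^\vee\varphi(z)=(\bar S(t)\varphi^\wedge)^\vee(z)=\bar S(t)\varphi^\wedge(\Pi(z))=\bar\E_{\Pi(z)}\,\varphi^\wedge(\bar Z(t))$, so that \eqref{fundacor} coincides with \eqref{funda}.

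Next I would pass from this pointwise statement to the $H$-convergence in \eqref{cf18}, treating first the case $\varphi\in C(\bar G)$. Since $S_\e(t)$ and $\bar S(t)$ are Markov transition semigroups, we have the crude uniform bound
\[
\sup_{\e>0}\,\sup_{t\in[\tau,T]}\,\bigl|S_\e(t)\varphi(z)-\bar S(t)^\vee\varphi(z)\bigr|\leq 2\|\varphi\|_\infty,\qquad z\in G,
\]
while by \eqref{fundacor} the left-hand side, for fixed $z$, tends to $0$ as $\e\to0$. Since $G$ is bounded, hence of finite Lebesgue measure, the dominated convergence theorem applied to $g_\e(z):=\sup_{t\in[\tau,T]}|S_\e(t)\varphi(z)-\bar S(t)^\vee\varphi(z)|^2$ gives $\int_G g_\e\,dz\to0$, and since $\sup_{t\in[\tau,T]}|S_\e(t)\varphi-\bar S(t)^\vee\varphi|_H^2\leq\int_G g_\e\,dz$, the first equality in \eqref{cf18} follows for $\varphi\in C(\bar G)$. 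To reach an arbitrary $\varphi\in H$ I would use that $S_\e(t)$ is a contraction on $H$ (the Lebesgue measure being invariant for it, see Section \ref{sec4}) and that $\bar S(t)^\vee$ is a contraction on $H$ as well --- indeed, by Lemma \ref{l1bis} and the contractivity of $\bar S(t)$ on $\bar H$, $|\bar S(t)^\vee\varphi|_H=|(\bar S(t)\varphi^\wedge)^\vee|_H=|\bar S(t)\varphi^\wedge|_{\bar H}\leq|\varphi^\wedge|_{\bar H}\leq|\varphi|_H$ --- and both bounds are uniform in $t$ and $\e$; then a standard density argument (approximating $\varphi$ in $H$ by elements of $C(\bar G)$) finishes the first equality in \eqref{cf18}.

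For the second equality in \eqref{cf18}, I would note that, by \eqref{cf46},
\[
\bigl(\bar S(t)^\vee\varphi\bigr)^\wedge=\bigl((\bar S(t)\varphi^\wedge)^\vee\bigr)^\wedge=\bar S(t)\varphi^\wedge,
\]
so that, using the linearity of $u\mapsto u^\wedge$ and the fact that it is a contraction from $H$ into $\bar H$ (Lemma \ref{l1bis}),
\[
\bigl|(S_\e(t)\varphi)^\wedge-\bar S(t)\varphi^\wedge\bigr|_{\bar H}=\bigl|\bigl(S_\e(t)\varphi-\bar S(t)^\vee\varphi\bigr)^\wedge\bigr|_{\bar H}\leq\bigl|S_\e(t)\varphi-\bar S(t)^\vee\varphi\bigr|_H .
\]
Taking $\sup_{t\in[\tau,T]}$ and letting $\e\to0$, the right-hand side vanishes by the part already proven, which shows that both limits in \eqref{cf18} equal $0$.

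I do not expect a real obstacle: the substance is entirely contained in Theorem \ref{theorem4.1}, and everything else is the dominated convergence theorem together with the isometry/contraction and duality identities for $(\cdot)^\wedge$ and $(\cdot)^\vee$ from Section \ref{sec2}. The only points deserving a little care are that one must genuinely use the finiteness of $|G|$ when upgrading the pointwise convergence to $L^2(G)$-convergence (and bound $\sup_t|\cdot|_H^2$ by $\int_G\sup_t|\cdot|^2$ rather than interchange the two operations), and that in the density step the contraction estimates for $S_\e(t)$ and for $\bar S(t)^\vee$ on $H$ must be available uniformly in both $t$ and $\e$.
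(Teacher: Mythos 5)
Your argument is correct and essentially the paper's own proof: \eqref{fundacor} is read off from Theorem \ref{theorem4.1} via $\bar{S}(t)^\vee\varphi=(\bar{S}(t)\varphi^\wedge)^\vee$, the case $\varphi \in\,C(\bar{G})$ is handled by dominated convergence on the bounded domain $G$ with the uniform bound $\|\varphi\|_\infty$, the general $\varphi \in\,H$ by density together with the contractivity of $S_\e(t)$ and $\bar{S}(t)^\vee$ on $H$, and the $\bar{H}$-statement via \eqref{cf46} and the contraction property of $u\mapsto u^\wedge$ from Lemma \ref{l1bis} (a step the paper leaves implicit). The only remark is one of ordering: the contractivity of $\bar{S}(t)$ on $\bar{H}$ that you invoke is established in the paper only after this Corollary (Theorem \ref{teo30} and \eqref{cf2000}), but since it is derived there from \eqref{fundacor} and the invariance of Lebesgue measure for $S_\e(t)$, using it after having proved \eqref{fundacor} is not circular --- indeed the paper's own density step relies on the same fact.
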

\begin{proof}
Since $\bar{S}(t)^\vee\varphi=(\bar{S}(t)\varphi^\wedge)^\vee$, limit \eqref{fundacor} is an immediate consequence of \eqref{funda}. Moreover, as 
\[\sup_{z \in\,G}\,\sup_{t \in\,[\tau,T]}\,|S_\e(t)\varphi(x)|=\sup_{z \in\,G}\,\sup_{t \in\,[\tau,T]}\,|\E_z\,\varphi(Z^\e(t))|\leq \|\varphi\|_\infty,\]
and
\[\begin{array}{l}
\ds{\sup_{z \in\,G}\,\sup_{t \in\,[\tau,T]}\,|(\bar{S}(t)\varphi^\wedge)^\vee(z)|=\sup_{(x,k) \in\,\Gamma}\,\sup_{t \in\,[\tau,T]}\,|\bar{S}(t)\varphi^\wedge(x,k)|}\\
\vs
\ds{=\sup_{(x,k) \in\,\Gamma}\,\sup_{t \in\,[\tau,T]}\,|\bar{\mathbb{E}}_{(x,k)}\varphi^\wedge(\bar{Z}(t)|\leq \|\varphi^\wedge\|_\infty\leq \|\varphi\|_\infty,}
\end{array}\]
by the dominated convergence theorem, from \eqref{fundacor} we get \eqref{cf18} for any $\varphi \in\,C(\bar{G})$. Now, if $\varphi \in\,H$, for any $\eta>0$ we can find $\bar{\varphi} \in\,C(\bar{G})$ such that $|\varphi-\bar{\varphi}|_H\leq \eta/4$.
This implies 
\[\begin{array}{l}
\ds{|S_\e(t)\varphi-\bar{S}(t)^\vee\varphi|_H\leq |S_\e(t)(\varphi-\bar{\varphi})-\bar{S}(t)^\vee(\varphi-\bar{\varphi})|_H+|S_\e(t)\bar{\varphi}-\bar{S}(t)^\vee\bar{\varphi}|_H}\\
\vs
\ds{\leq \frac \eta 2+|S_\e(t)\bar{\varphi}-\bar{S}(t)^\vee\bar{\varphi}|_H,}
\end{array}\]
so that we can find $\e_\eta>0$ such that 
\[|S_\e(t)\varphi-\bar{S}(t)^\vee\varphi|_H\leq \frac{\eta}2,\ \ \ \ \e\leq \e_\eta.\]
Due to the arbitrariness of $\eta$, this implies \eqref{cf18} for a general $\varphi \in\,H$.
\end{proof}

As the Lebesgue measure on $G$ is invariant for the semigroup $S_\e(t)$,  for any $\e>0$ and $\varphi \in\,C(\bar{G})$
\[\int_G S_\e(t)\varphi(z)\,dz=\int_G\varphi(z)\,dz,\ \ \ \ \ t\geq 0.\]
Now, due to \eqref{fundacor} and the dominated convergence theorem, when we take the limit as $\e$ goes to zero we get
\begin{equation}
\label{cf1001}
\int_G \bar{S}(t)^\vee\varphi(z)\,dz=\int_G\varphi(z)\,dz,\ \ \ \ \ t\geq 0.\end{equation}
Now, if we take $f \in\,C(\bar{\Gamma})$, we have that $f^\vee \in\,C(\bar{G})$, so that, thanks to \eqref{cf1001} and \eqref{cf46}, we get
\begin{equation}
\label{cf1004}
\int_G (\bar{S}(t)f)^\vee(z)\,dz=\int_G (\bar{S}(t)(f^\vee)^\wedge)^\vee(z)\,dz=\int_G f^\vee(z)\,dz.
\end{equation}
Moreover, thanks to \eqref{cf50}, if $\nu$ is the measure defined in \eqref{cf1003}, for any $g \in\,\bar{H}$ we have
\[\int_G g^\vee(z)\,dz=\le<g^\vee,1\r>_{H}=\le<g,1\r>_{\bar{H}}=\int_\Gamma g\,d\nu.\]
Thus, according to \eqref{cf1004}, we can conclude
\[\int_\Gamma \bar{S}(t)f\,d\nu=\int_\Gamma f\,d\nu,\ \ \ \ t\geq 0.\]
This implies the following fact.
\begin{Theorem}
\label{teo30}
The measure $\nu$ is invariant for the semigroup $\bar{S}(t)$. Hence $\bar{S}(t)$ extends to a contraction semigroup on $L^p(\Gamma,\nu)$, for every $p\geq 1$, and in particular on $\bar{H}=L^2(\Gamma,\nu)$.
\end{Theorem}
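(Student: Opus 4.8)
The identity just derived, $\int_\Gamma\bar{S}(t)f\,d\nu=\int_\Gamma f\,d\nu$ for $f\in C(\bar{\Gamma})$, already contains the bulk of the work; the plan is to upgrade it to arbitrary bounded Borel functions (which is exactly the invariance of $\nu$), then to read off the $L^p$ contraction property from invariance together with Jensen's inequality, and finally to verify strong continuity so that one genuinely obtains a $C_0$ semigroup of contractions.

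For the first point I would argue by a functional monotone class argument (equivalently, since $\nu$ is a finite Borel measure on the compact graph $\bar{\Gamma}$, by the Riesz representation theorem). Let $\mathcal{H}$ be the family of bounded Borel $f:\Gamma\to\reals$ such that $\bar{S}(t)f$ is Borel and $\int_\Gamma\bar{S}(t)f\,d\nu=\int_\Gamma f\,d\nu$ for every $t\geq 0$. Since $\bar{S}(t)g(x,k)=\bar{\E}_{(x,k)}g(\bar{Z}(t))$ is a positive linear operator fixing the constants, $\mathcal{H}$ is a vector space containing $1$, and by monotone convergence (applied first pointwise to get $\bar{S}(t)f_n\uparrow\bar{S}(t)f$, then under the two integral signs) it is closed under bounded increasing limits. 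As $\mathcal{H}\supseteq C(\bar{\Gamma})$, which is a multiplicative class separating the points of $\bar{\Gamma}$ and hence generating $\mathcal{B}(\Gamma)$, the monotone class theorem forces $\mathcal{H}$ to contain all bounded Borel functions; taking $f=I_A$ gives $\int_\Gamma\bar{p}_t(z,A)\,\nu(dz)=\nu(A)$ for all $A\in\mathcal{B}(\Gamma)$, i.e. $\nu$ is invariant for $\bar{S}(t)$.

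Given invariance, fix $p\geq 1$. For bounded Borel $f$, Jensen's inequality for the transition probability $\bar{p}_t(z,\cdot)$ yields $|\bar{S}(t)f|^p\leq\bar{S}(t)(|f|^p)$ on $\Gamma$; integrating against $\nu$ and invoking invariance gives $\|\bar{S}(t)f\|_{L^p(\Gamma,\nu)}\leq\|f\|_{L^p(\Gamma,\nu)}$. Since bounded functions are dense in $L^p(\Gamma,\nu)$ (by truncation), $\bar{S}(t)$ extends uniquely to a contraction on $L^p(\Gamma,\nu)$, the extension being $\lim_n\bar{S}(t)f_n$ in $L^p$ for any bounded Borel $f_n\to f$, independent of the approximating sequence by the contraction bound; the semigroup law $\bar{S}(t+s)=\bar{S}(t)\bar{S}(s)$, valid on $C(\bar{\Gamma})$ because $\bar{S}$ is an honest transition semigroup, passes to the limit. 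For strong continuity I would note that for $f\in C(\bar{\Gamma})$ the sample-path continuity of $\bar{Z}$ and dominated convergence give $\bar{S}(t)f\to f$ pointwise and with the uniform bound $\|f\|_\infty$, hence $\bar{S}(t)f\to f$ in $L^p(\Gamma,\nu)$ as $\nu$ is finite; density of $C(\bar{\Gamma})$ and the uniform contraction bound then give $\bar{S}(t)f\to f$ in $L^p(\Gamma,\nu)$ for all $f\in L^p(\Gamma,\nu)$, so $\{\bar{S}(t)\}$ is a $C_0$ contraction semigroup on each $L^p(\Gamma,\nu)$, in particular on $\bar{H}=L^2(\Gamma,\nu)$.

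The main obstacle, modest as it is, lies in the first step: one must be careful that $\bar{S}(t)f$ is Borel measurable for merely Borel $f$ and that the class $\mathcal{H}$ is genuinely stable under monotone limits; once invariance is in hand, the passage to $L^p$ is entirely routine.
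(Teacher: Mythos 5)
Your proposal is correct and follows essentially the same route as the paper: the paper's entire argument is the identity $\int_\Gamma\bar{S}(t)f\,d\nu=\int_\Gamma f\,d\nu$ for $f\in C(\bar{\Gamma})$ derived just before the statement, after which the theorem is asserted as an immediate consequence. You simply make explicit the standard steps the paper leaves implicit (monotone class extension to bounded Borel functions, Jensen's inequality with invariance for the $L^p$ contraction, density and strong continuity), and these details are all sound.
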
 
As $\bar{S}(t)$ extends to a contraction semigroup on $\bar{H}$, due to Lemma \ref{l1bis} we have  that for any $u \in\,H$
\begin{equation}
\label{cf2000}
|\bar{S}(t)^\vee u|_H=|(\bar{S}(t) u^\wedge)^\vee|_H=|\bar{S}(t) u^\wedge|_{\bar{H}}\leq |u^\wedge|_{\bar{H}}\leq |u|_H.
\end{equation}

Moreover, $\bar{L}$ turns out to be symmetric in $\bar{H}$.
\begin{Lemma}
For any $f,g \in\,D(\bar{L})$, it holds
\[\langle \bar{L}f,g\rangle_{\bar{H}}=\langle f,\bar{L}g\rangle_{\bar{H}}.\]
\end{Lemma}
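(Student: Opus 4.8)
The plan is to establish the symmetry directly, via the Sturm--Liouville integration by parts on each edge, with the gluing conditions defining $D(\bar{L})$ doing precisely the work needed to kill the boundary terms.

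First I would unwind the definitions. Since the measure $\nu$ weights the edge $I_k$ by $l_k(x)\,dx$, while $\bar{L}f(x,k)=\bar{{\cal L}}_kf(x)=\frac{1}{2l_k(x)}\frac{d}{dx}\le(l_k\frac{df}{dx}\r)(x)$, the factor $l_k$ appearing in the scalar product of $\bar{H}$ cancels the one in the denominator of $\bar{{\cal L}}_k$, so that
\[\langle \bar{L}f,g\rangle_{\bar{H}}=\sum_{k=1}^N\int_{I_k}\bar{{\cal L}}_kf(x)\,g(x,k)\,l_k(x)\,dx=\frac12\sum_{k=1}^N\int_{I_k}\frac{d}{dx}\le(l_k\frac{df}{dx}\r)(x)\,g(x,k)\,dx.\]
This is legitimate since $f,g\in D(\bar{L})$ forces $\bar{L}f$ to be bounded near each vertex and $g$ to be continuous, while $\nu$ is finite on each edge.

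Next I would integrate by parts on each edge $I_k=[a_k,b_k]$, using that $f$ is twice continuously differentiable in the interior of $I_k$ and that the one-sided limits $\lim_{x\to x_i}l_k(x)\frac{df}{dx}(x,k)$ exist at the two endpoints:
\[\int_{I_k}\frac{d}{dx}\le(l_k\frac{df}{dx}\r)g\,dx=\le[l_k\frac{df}{dx}\,g\r]_{a_k}^{b_k}-\int_{I_k}l_k(x)\frac{df}{dx}(x,k)\frac{dg}{dx}(x,k)\,dx.\]
Summed over $k$, the last term is visibly symmetric in $f$ and $g$, so it only remains to show that the boundary terms $\sum_{k=1}^N\big[l_k\frac{df}{dx}\,g\big]_{a_k}^{b_k}$ vanish. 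For this I would regroup them vertex by vertex: at a vertex $O_i$ the continuity of $g$ lets me factor out the common value $g(O_i)$, and what multiplies it is precisely $\sum_{k}\le(\pm\lim_{x\to x_i}l_k(x)\frac{df}{dx}(x,k)\r)$, with the sign determined by whether $O_i$ is a right or a left endpoint of $I_k$; after lining these signs up with the conventions of \eqref{gluing} this sum vanishes at every interior vertex, while at an exterior vertex the single surviving boundary term is zero by \eqref{gluingbis}. Hence all the boundary terms disappear and
\[\langle \bar{L}f,g\rangle_{\bar{H}}=-\frac12\sum_{k=1}^N\int_{I_k}l_k(x)\frac{df}{dx}(x,k)\frac{dg}{dx}(x,k)\,dx,\]
which is symmetric in $f$ and $g$, giving $\langle \bar{L}f,g\rangle_{\bar{H}}=\langle f,\bar{L}g\rangle_{\bar{H}}$.

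The substance is entirely in the boundary analysis, and the main obstacle is of a bookkeeping nature: carefully aligning the ``right limit $+$, left limit $-$'' convention of \eqref{gluing}--\eqref{gluingbis} with the signs produced by integration by parts, and checking that all the quantities are well defined near the vertices --- that the boundary values $l_k f' g$ exist (which they do, thanks to the one-sided limits built into $D(\bar{L})$ and the continuity of $g$) and that the quadratic form $\int_{I_k}l_k f' g'\,dx$ converges even at vertices where $l_k$ degenerates (which uses the finiteness of $\lim l_k f'$, $\lim l_k g'$ and $\lim\bar{L}f$ to control the integrand).
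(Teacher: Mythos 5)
Your argument is correct and is exactly the paper's proof: cancel the weight $l_k$ against the $1/l_k$ in $\bar{{\cal L}}_k$, integrate by parts on each edge, and use the continuity of $g$ together with the gluing conditions \eqref{gluing}--\eqref{gluingbis} to kill the boundary terms, leaving the symmetric form $-\frac12\sum_k\int_{I_k}l_k f'g'\,dx$. The paper simply states this more tersely, leaving the vertex-by-vertex sign bookkeeping you describe to the reader.
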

\begin{proof}
This is an immediate consequence of the boundary conditions imposed on functions in $D(\bar{L})$ and of the definition of the scalar product in $\bar{H}$. Actually, if $f,g \in\,D(\bar{L})$ we have
\[\begin{array}{l}
\ds{\langle \bar{L}f,g\rangle_{\bar{H}}=\sum_{k=1}^N\int_{I_k}{\cal L}_k f(x,k) g(x,k) l_k(x)\,dx=
\frac 12\sum_{k=1}^N\int_{I_k}\frac 1{l_k(x)}\le(l_k f^\prime\r)^\prime (x,k)g(x,k)l_k(x)\,dx}\\
\vs
\ds{=\frac 12\sum_{k=1}^Nl_k f^\prime g_{|_{\partial I_k}}
-\frac 12\sum_{k=1}^N\int_{I_k}l_k(x) f^\prime(x,k)g^\prime (x,k)\,dx=\langle f,\bar{L}g\rangle_{\bar{H}}.}
\end{array}\]

\end{proof}

\section{From the SPDE on the narrow channel $G_\e$ to the SPDE on the domain $G$}
\label{sec3}

We are here interested in the following stochastic reaction diffusion equation in the narrow channel $G_\e$
\begin{equation}
\label{eq-Geps}
\le\{\begin{array}{l}
\ds{\frac{\partial v_\e}{\partial t}(t,x,y)=\frac 12\,\Delta v_\e(t,x,y)+b(v_\e(t,x,y))+\sqrt{\e}\,\frac{\partial w^{Q_\e}}{\partial t}(t,x,y),\ \ \ \ \ (x,y) \in\,G_\e,}\\
\vs
\ds{\frac{\partial  v_\e}{\partial \nu_\e}(t,x,y)=0,\ \ \ (x,y) \in\,\partial G_{\e},\ \ \ \ \ \ \ v_\e(0,x,y)=u_0(x,y\e^{-1}),}
\end{array}\r.
\end{equation}
where $\partial/\partial \nu_\e$ denotes the normal derivative at the boundary of $G_\e$. Here we assume that  $b:\reals\to \reals$ is a Lipschitz-continuous function and $u_0 \in\,C(\bar{G})$. Moreover, we assume that  $w^{Q_\e}(t)$ is a  cylindrical Wiener process taking values in $H_\e=L^2(G_\e)$, having covariance operator $Q_\e^\star Q_\e \in\,{\cal L}_1^+(H_\e)$, that is, for any $t, s\geq 0$ and $u,v \in\,H_\e$ 
\begin{equation}
\label{noise}
\text{\bf E}\le<w^{Q_\e}(t), u\r>_{H_\e}\le<w^{Q_\e}(s), v\r>_{H_\e}=(t\wedge s)\,\le<Q_\e Q_\e^\star u,v\r>_{H_\e}.
\end{equation}
In particular, there exist some complete orthonormal system $\{e^\e_k\}_{k \in\,\nat}$ in $H_\e$ and some sequence of independent standard Brownian motions $\{\beta^\e_k(t)\}_{k \in\,\nat}$, all defined on the same stochastic basis, such that
\[w^{Q_\e}(t)(x,y)=\sum_{k=1}^\infty Q_\e e^\e_k(x,y)\beta^\e_k(t),\ \ \ \ t\geq 0.\]

\medskip

For any $\e_1, \e_2>0$ and $f \in\,H_{\e_1}$, we define
\[J_{\e_2,\e_1}f(x,y)=\sqrt{\frac{\e_1}{\e_2}}\,f(x,\e_1\e_2^{-1} \,y),\ \ \ \ (x,y) \in\,G_{\e_2}.\]
Clearly, $J_{\e_2,\e_1}$ maps $H_{\e_1}$ into $H_{\e_2}$, and for every $\e_1, \e_2, \e_3>0$, we have 
\[J_{\e_3,\e_2}\circ J_{\e_2,\e_1}=J_{\e_3,\e_1}.\]
In particular, $J_{\e_2,\e_1}^{-1}=J_{\e_1,\e_2}$. Moreover
\begin{equation}
\label{scalar}
\le<J_{\e_2,\e_1}\,u,J_{\e_2,\e_1}\,v\r>_{H_{\e_2}}=\le<u,v\r>_{H_{\e_1}}.
\end{equation}
\begin{Lemma}
\label{l1}
Let us fix $\e_1, \e_2>0$. Then, if $\{e_k\}_{k \in\,\nat}$ is a complete orthonormal basis in $H_{\e_1}$, we have that $\le\{J_{\e_2,\e_1}e_k\r\}_{k \in\,\nat}$ is a complete orthonormal basis in $H_{\e_2}$.
\end{Lemma}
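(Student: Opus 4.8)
The plan is to observe that $J_{\e_2,\e_1}\colon H_{\e_1}\to H_{\e_2}$ is a unitary operator and then to invoke the elementary fact that a unitary operator carries a complete orthonormal system onto a complete orthonormal system.

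First I would record that, by \eqref{scalar}, the map $J_{\e_2,\e_1}$ preserves inner products; in particular $|J_{\e_2,\e_1}u|_{H_{\e_2}}=|u|_{H_{\e_1}}$ for every $u\in H_{\e_1}$, so $J_{\e_2,\e_1}$ is a bounded linear isometry. Moreover, the composition rule $J_{\e_3,\e_2}\circ J_{\e_2,\e_1}=J_{\e_3,\e_1}$, applied with $\e_3=\e_1$ and then with the roles of $\e_1,\e_2$ exchanged, gives $J_{\e_1,\e_2}\circ J_{\e_2,\e_1}=J_{\e_1,\e_1}=\mathrm{Id}_{H_{\e_1}}$ and $J_{\e_2,\e_1}\circ J_{\e_1,\e_2}=\mathrm{Id}_{H_{\e_2}}$, so that $J_{\e_2,\e_1}$ is onto, with inverse $J_{\e_1,\e_2}$ (this is exactly the identity $J_{\e_2,\e_1}^{-1}=J_{\e_1,\e_2}$ already noted above). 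Hence $J_{\e_2,\e_1}$ is unitary.

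Orthonormality of $\{J_{\e_2,\e_1}e_k\}_{k\in\nat}$ is then immediate from \eqref{scalar}, since $\langle J_{\e_2,\e_1}e_h,J_{\e_2,\e_1}e_k\rangle_{H_{\e_2}}=\langle e_h,e_k\rangle_{H_{\e_1}}=\delta_{hk}$. For completeness, I would take an arbitrary $g\in H_{\e_2}$ and set $u:=J_{\e_1,\e_2}g\in H_{\e_1}$. Since $\{e_k\}_{k\in\nat}$ is complete in $H_{\e_1}$, we have $u=\sum_{k}\langle u,e_k\rangle_{H_{\e_1}}e_k$ with convergence in $H_{\e_1}$; applying the bounded operator $J_{\e_2,\e_1}$, using $J_{\e_2,\e_1}u=g$ and rewriting $\langle u,e_k\rangle_{H_{\e_1}}=\langle J_{\e_2,\e_1}u,J_{\e_2,\e_1}e_k\rangle_{H_{\e_2}}=\langle g,J_{\e_2,\e_1}e_k\rangle_{H_{\e_2}}$ via \eqref{scalar}, we obtain $g=\sum_{k}\langle g,J_{\e_2,\e_1}e_k\rangle_{H_{\e_2}}\,J_{\e_2,\e_1}e_k$. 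Thus every element of $H_{\e_2}$ lies in the closed span of $\{J_{\e_2,\e_1}e_k\}_{k\in\nat}$, which, being an orthonormal system, is therefore a complete orthonormal basis of $H_{\e_2}$.

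There is essentially no obstacle here: the only points that need (minimal) care are the genuine invertibility of $J_{\e_2,\e_1}$ — handled by the already recorded identity $J_{\e_2,\e_1}^{-1}=J_{\e_1,\e_2}$ — and the fact that the orthonormal expansion of $u$ may be passed through $J_{\e_2,\e_1}$, which is legitimate precisely because $J_{\e_2,\e_1}$ is a bounded (indeed isometric) linear map and the series converges in $H_{\e_1}$.
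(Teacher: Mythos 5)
Your proof is correct and follows essentially the same route as the paper: orthonormality from the inner-product identity \eqref{scalar}, and completeness from the fact that $J_{\e_2,\e_1}$ is a surjective isometry with inverse $J_{\e_1,\e_2}$. The only cosmetic difference is that you establish completeness by expanding an arbitrary $g=J_{\e_2,\e_1}u$ in the transported basis, whereas the paper argues via the equivalent criterion that the only vector orthogonal to all $J_{\e_2,\e_1}e_k$ is zero; both rest on the same identity $\langle f,J_{\e_2,\e_1}e_k\rangle_{H_{\e_2}}=\langle J_{\e_1,\e_2}f,e_k\rangle_{H_{\e_1}}$.
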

\begin{proof}
For any $h,k \in\,\nat$, we have
\[\begin{array}{l}
\ds{\le<J_{\e_2,\e_1}e_k,J_{\e_2,\e_1}e_h\r>_{H_{\e_2}}=\frac{\e_1}{\e_2}\iint_{G_{\e_2}}e_k(x,\e_1\e_2^{-1} y)e_h(x,\e_1\e_2^{-1} y)\,dx\,dy}\\
\vs
\ds{=\iint_{G_{\e_1}}e_k(x, y)e_h(x,y)\,dx\,dy=\le<e_k,e_h\r>_{H_{\e_1}}=\d_{k,h}.}
\end{array}\]
Moreover, as
\[\langle f,J_{\e_2,\e_1} e_k\rangle_{H_{\e_2}}=\langle J_{\e_1,\e_2}f, e_k\rangle_{H_{\e_1}},\]
if $\{e_k\}_{k \in\,\nat}$ is a complete system in $H_{\e_1}$, we have 
\[\langle f,J_{\e_2,\e_1} e_k\rangle_{H_{\e_2}}=0,\ \ \forall\, k \in\,\mathbb{N}\Longleftrightarrow J_{\e_1,\e_2}f=0 \Longleftrightarrow f=0.\]
This implies the completeness of the system $\{J_{\e_2,\e_1} e_k\}_{k \in\,\nat}$.

\end{proof}

Now, for any $\e_1,\e_2>0$ and $Q \in\,{\cal L}(H_{\e_1})$, we define  
\[I_{\e_2,\e_1} Q=J_{\e_2,\e_1}\circ Q\circ J_{\e_1,\e_2} \in\,{\cal L}(H_{\e_2}).\]

\begin{Lemma}
\label{l1.2}
For every $\e_1, \e_2>0$, the operator $I_{\e_2,\e_1}$ is an isometry from ${\cal L}(H_{\e_1})$ into ${\cal L}(H_{\e_2})$  and from ${\cal L}_2(H_{\e_1})$ into ${\cal L}_2(H_{\e_2})$.
\end{Lemma}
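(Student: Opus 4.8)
The plan is to exploit the fact that $J_{\e_2,\e_1}$ is a \emph{unitary} operator between the Hilbert spaces $H_{\e_1}$ and $H_{\e_2}$, together with the standard fact that conjugation by a unitary preserves both the operator norm and the Hilbert--Schmidt norm. First I would record that, by \eqref{scalar}, $J_{\e_2,\e_1}$ is a linear isometry from $H_{\e_1}$ into $H_{\e_2}$, and that it is onto, with $J_{\e_2,\e_1}^{-1}=J_{\e_1,\e_2}$, as a consequence of the composition rule $J_{\e_3,\e_2}\circ J_{\e_2,\e_1}=J_{\e_3,\e_1}$ (which for $\e_3=\e_1$ gives $J_{\e_1,\e_2}\circ J_{\e_2,\e_1}=J_{\e_1,\e_1}=\mathrm{Id}$). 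The same composition rule immediately yields $I_{\e_1,\e_2}\circ I_{\e_2,\e_1}=\mathrm{Id}$ on ${\cal L}(H_{\e_1})$, since $I_{\e_1,\e_2}(I_{\e_2,\e_1}Q)=(J_{\e_1,\e_2}J_{\e_2,\e_1})\,Q\,(J_{\e_1,\e_2}J_{\e_2,\e_1})=Q$.

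For the operator-norm statement, I would first show that $\|I_{\e_2,\e_1}Q\|_{{\cal L}(H_{\e_2})}\leq \|Q\|_{{\cal L}(H_{\e_1})}$ for every $Q \in\,{\cal L}(H_{\e_1})$: for $u \in\,H_{\e_2}$,
\[
|I_{\e_2,\e_1}Q\,u|_{H_{\e_2}}=|J_{\e_2,\e_1}Q J_{\e_1,\e_2}u|_{H_{\e_2}}=|Q\,J_{\e_1,\e_2}u|_{H_{\e_1}}\leq \|Q\|_{{\cal L}(H_{\e_1})}\,|J_{\e_1,\e_2}u|_{H_{\e_1}}=\|Q\|_{{\cal L}(H_{\e_1})}\,|u|_{H_{\e_2}},
\]
where I used that both $J_{\e_2,\e_1}$ and $J_{\e_1,\e_2}$ are isometries by \eqref{scalar}. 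Applying the same inequality with the roles of $\e_1$ and $\e_2$ interchanged, and then using $I_{\e_1,\e_2}\circ I_{\e_2,\e_1}=\mathrm{Id}$, gives the reverse bound $\|Q\|_{{\cal L}(H_{\e_1})}=\|I_{\e_1,\e_2}(I_{\e_2,\e_1}Q)\|_{{\cal L}(H_{\e_1})}\leq \|I_{\e_2,\e_1}Q\|_{{\cal L}(H_{\e_2})}$, hence equality.

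For the Hilbert--Schmidt statement, I would fix a complete orthonormal system $\{e_k\}_{k \in\,\nat}$ in $H_{\e_1}$; by Lemma \ref{l1}, $\{J_{\e_2,\e_1}e_k\}_{k \in\,\nat}$ is a complete orthonormal system in $H_{\e_2}$. Since the Hilbert--Schmidt norm is independent of the choice of orthonormal basis, I would evaluate it on this particular basis and use $J_{\e_1,\e_2}\circ J_{\e_2,\e_1}=\mathrm{Id}$ together with \eqref{scalar}:
\[
\|I_{\e_2,\e_1}Q\|_{{\cal L}_2(H_{\e_2})}^2=\sum_{k=1}^\infty|J_{\e_2,\e_1}Q\,J_{\e_1,\e_2}J_{\e_2,\e_1}e_k|_{H_{\e_2}}^2=\sum_{k=1}^\infty|J_{\e_2,\e_1}Q\,e_k|_{H_{\e_2}}^2=\sum_{k=1}^\infty|Q\,e_k|_{H_{\e_1}}^2=\|Q\|_{{\cal L}_2(H_{\e_1})}^2,
\]
with all sums simultaneously finite or infinite, so that $I_{\e_2,\e_1}$ maps ${\cal L}_2(H_{\e_1})$ isometrically into ${\cal L}_2(H_{\e_2})$.

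There is no substantial obstacle in this argument; the only points requiring a little care are invoking Lemma \ref{l1} so as to be entitled to compute the Hilbert--Schmidt norm of $I_{\e_2,\e_1}Q$ on the specific basis $\{J_{\e_2,\e_1}e_k\}$ rather than an arbitrary one, and using the composition identity for the maps $J$ consistently (in particular $J_{\e_1,\e_2}\circ J_{\e_2,\e_1}=\mathrm{Id}$ and $I_{\e_1,\e_2}\circ I_{\e_2,\e_1}=\mathrm{Id}$) to obtain the reverse inequality in the operator-norm case.
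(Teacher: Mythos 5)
Your proposal is correct and follows essentially the same route as the paper: both use that $J_{\e_2,\e_1}$ is unitary (via \eqref{scalar}) to get the operator-norm isometry, and both invoke Lemma \ref{l1} together with basis-independence of the Hilbert--Schmidt norm to get the ${\cal L}_2$ isometry, the only cosmetic difference being that you push forward an orthonormal basis of $H_{\e_1}$ while the paper pulls back one of $H_{\e_2}$. Your explicit derivation of the reverse operator-norm inequality via $I_{\e_1,\e_2}\circ I_{\e_2,\e_1}=\mathrm{Id}$ is a slightly more careful rendering of a step the paper states tersely.
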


\begin{proof}Due to \eqref{scalar},  for any $f \in\,H_{\e_1}$ we have
\[|J_{\e_2,\e_1} f|_{H_{\e_2}}=|f|_{H_{\e_1}},\]
so that $I_{\e_2,\e_1}$ maps ${\cal L}(H_{\e_1})$ into ${\cal L}(H_{\e_2})$ as an isometry.
Moreover, if $\{e^{\e_2}_k\}_{k \in\,\nat}$ is a complete orthonormal system in $H_{\e_2}$, according to Lemma \ref{l1} we have
\[\begin{array}{l}
\ds{\|I_{\e_2,\e_1}Q\|^2_{{\cal L}_2(H_{\e_2})}=\sum_{k=1}^\infty \le|(I_{\e_2,\e_1}Q)e^{\e_2}_k\r|_{H_{\e_2}}^2=\sum_{k=1}^\infty\le|J_{\e_2,\e_1}Q(J_{\e_1,\e_2}e^{\e_2}_k)\r|_{H_{\e_2}}^2}\\
\vs
\ds{=\sum_{k=1}^\infty\le|Q\le(J_{\e_1,\e_2}e^{\e_2}_k\r)\r|_{H_{\e_1}}^2=\|Q\|^2_{{\cal L}_2(H_{\e_1})}.}
\end{array}\]
This proves that $I_{\e_2,\e_1}$ is an isometry from ${\cal L}_2(H_{\e_1})$ into ${\cal L}_2(H_{\e_2})$. \end{proof}

With the above notations, if $v_\e$ is a solution of equation \eqref{eq-Geps} and if we define 
\[u_\e(t,x,y)=\frac 1{\sqrt{\e}}\,(J_{1,\e} v_\e)(t,x,y)=v_\e(t,x,\e y),\ \ \ \ t\geq 0,\ \ (x,y) \in\,G,\]
 we have that 
\begin{equation}
\label{eq1}
\frac{\partial u_\e}{\partial t}(t,x,y)={\cal L}_\e u_\e(t,x,y)+b(u_\e(t,x,y))+\frac{\partial (J_{1,\e}\, w^{Q_\e})}{\partial t}(t,x,y),\end{equation}
where ${\cal L}_\e$ is the uniformly elliptic second order differential operator defined in \eqref{cf303}.

\begin{Lemma}
\label{l1.3}
Assume that there exists some $Q \in\,{\cal L}_2(H)$ such that
\begin{equation}
\label{q-eps}
Q_\e=I_{\e, 1} Q,\ \ \ \ \e>0.
\end{equation}
Then $J_{1,\e}\,w^{Q_\e}(t)\sim w^Q(t)$.
\end{Lemma}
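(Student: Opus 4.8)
The plan is to show that $J_{1,\e}w^{Q_\e}(t)$ is, up to the choice of an orthonormal basis and of the driving Brownian motions, precisely a $Q$-Wiener process in $H$; since the law of such a process depends only on $Q$ (see e.g.\ \cite{DPZ}), this gives $J_{1,\e}w^{Q_\e}(t)\sim w^Q(t)$. The first thing I would record is that each $J_{\e_2,\e_1}$ is unitary: by \eqref{scalar} it preserves scalar products, and it is invertible with inverse $J_{\e_1,\e_2}$, so $J_{\e_2,\e_1}^\star=J_{\e_1,\e_2}$; in particular $J_{1,\e}^\star=J_{\e,1}$, and $J_{1,\e}\circ J_{\e,1}=J_{1,1}=\mathrm{id}_H$ by the composition rule for the maps $J$.

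Then I would write $w^{Q_\e}(t)=\sum_k Q_\e e^\e_k\,\beta^\e_k(t)$ with $\{e^\e_k\}$ a complete orthonormal system in $H_\e$ and $\{\beta^\e_k\}$ independent standard Brownian motions, and apply $J_{1,\e}$ term by term, which is legitimate since $J_{1,\e}\in\mathcal{L}(H_\e,H)$ and the series converges in $L^2(\Omega;H_\e)$. Using hypothesis \eqref{q-eps},
\[
J_{1,\e}Q_\e e^\e_k=J_{1,\e}\bigl(J_{\e,1}\circ Q\circ J_{1,\e}\bigr)e^\e_k=Q\bigl(J_{1,\e}e^\e_k\bigr),
\]
so that $J_{1,\e}w^{Q_\e}(t)=\sum_k Q(J_{1,\e}e^\e_k)\,\beta^\e_k(t)$. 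By Lemma \ref{l1} (with $\e_1=\e$, $\e_2=1$) the family $\{J_{1,\e}e^\e_k\}$ is a complete orthonormal system in $H$, while $\{\beta^\e_k\}$ remains a sequence of independent standard Brownian motions; hence the right-hand side is a $Q$-Wiener process in $H$ and therefore has the same law as $w^Q(t)$.

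If one prefers to argue through covariances, the same conclusion follows directly: using the adjoint relation above together with \eqref{noise} one gets, for $u,v\in H$,
\[
\mathbb{E}\,\langle J_{1,\e}w^{Q_\e}(t),u\rangle_H\,\langle J_{1,\e}w^{Q_\e}(s),v\rangle_H=(t\wedge s)\,\langle Q_\e Q_\e^\star J_{\e,1}u,\,J_{\e,1}v\rangle_{H_\e},
\]
and since \eqref{q-eps} yields $Q_\e Q_\e^\star=J_{\e,1}(QQ^\star)J_{1,\e}$, the right-hand side reduces, using $J_{1,\e}\circ J_{\e,1}=\mathrm{id}_H$ and \eqref{scalar}, to $(t\wedge s)\,\langle QQ^\star u,v\rangle_H$, which is the covariance of $w^Q$; both processes being centered and Gaussian, equality of covariances gives equality in law. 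The computation is routine; the only points needing a word of justification are the identity $J_{1,\e}^\star=J_{\e,1}$ (immediate from \eqref{scalar}) and the basis-independence of the law of a $Q$-Wiener process, and I do not expect any genuine obstacle here.
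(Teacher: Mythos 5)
Your proposal is correct and, in its second half, coincides with the paper's own proof: the authors likewise compute the covariance $\mathbb{E}\,\langle J_{1,\e}w^{Q_\e}(t),u\rangle_H\langle J_{1,\e}w^{Q_\e}(s),v\rangle_H=(t\wedge s)\,\langle (I_{1,\e}Q_\e)(I_{1,\e}Q_\e)^\star u,v\rangle_H$ using \eqref{noise} and \eqref{scalar}, and then invoke \eqref{q-eps} to identify this with the covariance of $w^Q$. Your leading term-by-term argument (transporting the orthonormal basis via Lemma \ref{l1} and using the basis-independence of the law of a $Q$-Wiener process) is a harmless equivalent variant resting on the same two facts, namely the unitarity of $J_{1,\e}$ and the identity $I_{1,\e}Q_\e=Q$.
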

\begin{proof}
According to \eqref{noise} and \eqref{scalar}, for any $t, s \geq 0$  and $u,v \in\,H$ we have
\[\begin{array}{l}
\ds{\text{\bf E}\le<J_{1,\e}\,w^{Q_\e}(t),u\r>_H\le<J_{1,\e}\,w^{Q_\e}(s),v\r>_H=\text{\bf E}\le<w^{Q_\e}(t),J_{\e,1}\,u\r>_{H_\e}\le<w^{Q_\e}(s),J_{\e, 1}\,v\r>_{H_\e}}\\
\vs
\ds{=(t\wedge s)\,\le<Q_\e Q_\e^\star J_{\e,1}\,u,J_{\e,1}\,v \r>_{H_\e}=(t\wedge s)\,\le<J_{1,\e}\,Q_\e Q_\e^\star J_{\e,1}\,u,v \r>_{H}}\\
\vs
\ds{=(t\wedge s)\,\le<(I_{1,\e}\,Q_\e) (I_{1,\e}Q_\e)^\star u,v \r>_{H}.}
\end{array}\]
As we are assuming \eqref{q-eps}, this allows to conclude that $J_{1,\e}\,w^{Q_\e}(t)\sim w^Q(t)$.
\end{proof}

\begin{Remark}
{\em If  $Q \in\,{\cal L}_2(H)$, then there exist a sequence $\{\la_k\}_{k \in\,\nat}$ and a complete orthonormal system $\{e_k\}_{k \in\,\nat}$ in $H$ such that
\[Q e_k=\la_k e_k,\ \ \ \ k \in\,\nat.\]
Then, since in \eqref{q-eps} we assume  $Q_\e=I_{\e,1}Q$, we have
\[Q_\e f^\e_k=\la_k f^\e_k,\ \ \ \ k \in\,\nat,\]
where $\{f^\e_k\}_{k \in\,\nat}$ is the complete orthonormal system of $H_\e$, defined by $f^\e_k=J_{\e,1}e_k$, for any $k \in\,\nat$.}

\end{Remark}

Concerning the boundary conditions satisfied by $u_\e$  we have the following result.
\begin{Lemma}
\label{l1.4}
For any $\e>0$, we have
\begin{equation}
\label{boundary}
\nabla v_\e\cdot \nu^{\e}_{|_{\partial G_\e}}=0\Longleftrightarrow \nabla u_\e\cdot \si_\e \nu_{|_{\partial G}}=0.
\end{equation}
\end{Lemma}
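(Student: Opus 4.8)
The proof is a direct computation combining the change of variables $u_\e(x,y)=v_\e(x,\e y)$ with the explicit relation \eqref{normal} between the inward normals at $\partial G_\e$ and $\partial G$. First I would record the effect of the rescaling on gradients: since $u_\e(x,y)=v_\e(x,\e y)$, the chain rule gives, for every $(x,y)\in\bar G$,
\[\frac{\partial u_\e}{\partial x}(x,y)=\frac{\partial v_\e}{\partial x}(x,\e y),\qquad \frac{\partial u_\e}{\partial y}(x,y)=\e\,\frac{\partial v_\e}{\partial y}(x,\e y).\]
Consequently, since $\si_\e$ is the diagonal matrix with entries $1$ and $\e^{-2}$,
\[\si_\e\nabla u_\e(x,y)=\le(\frac{\partial v_\e}{\partial x}(x,\e y),\ \frac 1\e\,\frac{\partial v_\e}{\partial y}(x,\e y)\r),\qquad (x,y)\in\bar G.\]

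Next I would compute the two boundary bilinear forms and compare them. Because $\si_\e$ is symmetric, for $(x,y)\in\partial G$ we have $\nabla u_\e(x,y)\cdot\si_\e\nu(x,y)=\big(\si_\e\nabla u_\e(x,y)\big)\cdot\nu(x,y)$, hence by the previous display
\[\nabla u_\e(x,y)\cdot\si_\e\nu(x,y)=\frac{\partial v_\e}{\partial x}(x,\e y)\,\nu_1(x,y)+\frac 1\e\,\frac{\partial v_\e}{\partial y}(x,\e y)\,\nu_2(x,y).\]
On the other hand, the boundary $\partial G_\e$ is exactly the set of points $(x,\e y)$ with $(x,y)\in\partial G$, and by \eqref{normal} one has $\nu^\e(x,\e y)=c_\e(x,y)\,(\e\,\nu_1(x,y),\,\nu_2(x,y))$, so that
\[\nabla v_\e(x,\e y)\cdot\nu^\e(x,\e y)=c_\e(x,y)\le[\e\,\frac{\partial v_\e}{\partial x}(x,\e y)\,\nu_1(x,y)+\frac{\partial v_\e}{\partial y}(x,\e y)\,\nu_2(x,y)\r]=\e\,c_\e(x,y)\,\big(\nabla u_\e(x,y)\cdot\si_\e\nu(x,y)\big).\]

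Finally I would conclude: since $\e>0$ and $c_\e(x,y)\in[1,+\infty)$ by \eqref{normal}, the scalar factor $\e\,c_\e(x,y)$ never vanishes, and $(x,y)\mapsto(x,\e y)$ is a bijection from $\partial G$ onto $\partial G_\e$; therefore $\nabla v_\e\cdot\nu^\e$ vanishes identically on $\partial G_\e$ if and only if $\nabla u_\e\cdot\si_\e\nu$ vanishes identically on $\partial G$, which is \eqref{boundary}. There is no genuine obstacle in this argument; the only points needing a little care are keeping track of the correspondence $(x,y)\leftrightarrow(x,\e y)$ between the two boundaries and using the already-established formula \eqref{normal} instead of recomputing the normals, together with the bookkeeping of the factors of $\e$ coming from the chain rule and from $\si_\e$.
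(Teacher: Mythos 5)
Your proof is correct and is essentially identical to the paper's: both compute $\nabla u_\e\cdot\si_\e\nu$ via the chain rule for $u_\e(x,y)=v_\e(x,\e y)$ and then use \eqref{normal} to identify it with $(\e\,c_\e(x,y))^{-1}\,\nabla v_\e(x,\e y)\cdot\nu^\e(x,\e y)$, concluding from the nonvanishing of the scalar factor. The only difference is that you spell out the bijection between the boundaries and the positivity of $c_\e$ explicitly, which the paper leaves implicit.
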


\begin{proof}
According to \eqref{normal}, for any $(x,y) \in\,\partial G$ and $\e>0$, we have
\[\begin{array}{l}
\ds{\nabla u_\e(t,x,y)\cdot \si_\e \nu(x,y)=\frac{\partial u_\e}{\partial x}(t,x,y)\nu_1(x,y)+\frac 1{\e^2}\frac{\partial u_\e}{\partial y}(t,x,y)\nu_2(x,y)}\\
\vs
\ds{=\frac{\partial v_\e}{\partial x}(t,x,\e y)\nu_1(x,y)+\frac 1{\e}\frac{\partial v_\e}{\partial y}(t,x,\e y)\nu_2(x,y)=\frac1{\e\,c_\e(x,y)}\nabla v_\e(t,x,\e y)\cdot \nu^\e(x, \e y).}
\end{array}\]
This implies \eqref{boundary}.

\end{proof}

As a consequence of \eqref{eq1} and Lemmas \ref{l1.3} and \ref{l1.4}, we can conclude that if $v_\e$ is a solution of problem \eqref{eq-Geps}, then $u_\e$ coincides in distribution with  the solution of the problem 
\begin{equation}
\label{eq-G}
\le\{\begin{array}{l}
\ds{\frac{\partial u_\e}{\partial t}(t,x,y)={\cal L}_\e u_\e(t,x,y)+b(u_\e(t,x,y))+\frac{\partial w^{Q}}{\partial t}(t,x,y),\ \ \ \ \ (x,y) \in\,G,}\\
\vs
\ds{\nabla u_\e(t,x,y)\cdot \si_\e \nu(x,y)=0,\ \ \ (x,y) \in\,\partial G,\ \ \ \ \ \ \ u_\e(0,x,y)=u_0(x,y).}
\end{array}\r.
\end{equation}

In what follows, we shall assume that the non-linearity $b:\reals\to\reals$ is Lipschitz-continuous. In particular, this means that the mapping
\[B:H\to H,\ \ \ u \in\,H\mapsto B(u)=b\circ u \in\,H,\]
is well defined and Lipschitz-continuous. Notice that, in the same way, we have that $B:\bar{H}\to\bar{H}$ is well defined and Lipschitz-continuous.

\begin{Definition}
An ${\cal F}_{t}$-adapted process $u_\e \in\,L^p(\Omega;C([0,T];H))$ is a mild solution for problem \eqref{eq-G} if
\[u_\e(t)=S_\e(t)u_0+\int_0^t S_\e(t-s)B(u_\e(s))\,ds+\int_0^t S_\e(t-s)dw^{Q}(s).\]
\end{Definition}
We are assuming here that $Q_\e \in\,{\cal L}_2(H_\e)$, then, according to Lemma \ref{l1.2}, we have that $Q \in\,{\cal L}_2(H)$. This implies that, 
if we define 
\[w_\e(t)=\int_0^t S_\e(t-s)dw^{Q}(s),\]
then $w_\e \in\,L^p(${\boldmath $\Omega$}$;C([0,T];H))$ (in fact this is true also under weaker conditions on $Q$). In particular, since the mapping $B:H\to H$ is Lipschitz-continuous, as a consequence of a fixed point argument in  $L^p(${\boldmath $\Omega$}$;C([0,T];H))$ we can conclude that there exists a unique mild solution $u_\e \in\, L^p(${\boldmath $\Omega$}$;C([0,T];H))$ to equation \eqref{eq-G}.

\section{From the SPDE on  $G$ to the SPDE on the graph $\Gamma$}
\label{sec6}

In Section \ref{sec3}, by a suitable change of variable, from the stochastic reaction-diffusion equation \eqref{eq-Geps} in the narrow channel $G_\e$ we have obtained the following stochastic reaction diffusion equation  in the fixed domain $G$
\begin{equation}
\label{eq-G-bis}
\le\{\begin{array}{l}
\ds{\frac{\partial u_\e}{\partial t}(t,x,y)={\cal L}_\e u_\e(t,x,y)+b(u_\e(t,x,y))+\frac{\partial w^{Q}}{\partial t}(t,x,y),\ \ \ \ \ (x,y) \in\,G,}\\
\vs
\ds{\nabla u_\e(t,x,y)\cdot \si_\e \nu(x,y)=0,\ \ \ (x,y) \in\,\partial G,\ \ \ \ \ \ \ u_\e(0,x,y)=u_0(x,y).}
\end{array}\r.
\end{equation}
 Our purpose here is to study the limiting behavior of its unique mild solution $u_\e$ in the space $L^p(${\boldmath $\Omega$}$;C([0,T];H))$, as $\e\to 0$. 
 
To this purpose, let us consider the problem
\begin{equation}
\label{spde-graph}
\frac{\partial \bar{u}}{\partial t}(t,x,k)=\bar{L} \bar{u}(t,x,k)+b(\bar{u}(t,x,k))+\frac{\partial \bar{w}^{Q}}{\partial t}(t,x,k),\ \ \ \ \bar{u}(0,x,k)=u_0^{\ \wedge}(x,k),
\end{equation}
where $u_0 \in\,C(\bar{G})$ and  $\bar{L}$ is the second order differential operator on $\Gamma$, defined in the interior part of each edge $I_k$ of $\Gamma$ by the operators ${\cal L}_k$, given in \eqref{lk}, and endowed with the gluing conditions described in \eqref{gluing} and \eqref{gluingbis}. Here $\bar{w}^{Q}$ is the cylindrical Wiener process defined by
\begin{equation}
\label{cf22}
\bar{w}^{Q}(t)=\sum_{j=1}^\infty (Qe_j)^\wedge\,\beta_j(t),
\end{equation}
where $\{e_j\}_{j \in\,\nat}$ is a complete orthonormal system in $H$ and $\{\beta_j(t)\}_{j \in\,\nat}$ is a sequence of independent standard Brownian motions. Thanks to \eqref{cf50}, this means that
for any $f,g \in\,\bar{H}$ and $t,s\geq 0$
\[\begin{array}{l}
\ds{\text{\bf E}\langle\bar{w}^{Q}(t),f\rangle_{\bar{H}} \langle\bar{w}^{Q}(s),g\rangle_{\bar{H}} =\sum_{j=1}^\infty \langle (Q e_j)^\wedge,f\rangle_{\bar{H}} \langle (Q e_j)^\wedge,g\rangle_{\bar{H}}(t\wedge s)}\\
\vs
\ds{=\sum_{j=1}^\infty \langle Q e_j,f^\vee\rangle_{H} \langle Q e_j,g^\vee\rangle_{H}(t\wedge s)=\langle Q Q^\star f^\vee,g^\vee\rangle_{H}(t\wedge s)}\\
\vs
\ds{=\langle (QQ^\star f^\vee)^\wedge,g\rangle_{\bar{H}}(t\wedge s)=\langle (QQ^\star)^\wedge  f,g\rangle_{\bar{H}}(t\wedge s).}
\end{array}\]
Notice  that if we assume  $Q \in\,{\cal L}_2(H)$, then, due to Lemma \ref{l2}, we have  
\[\sum_{j=1}^\infty \langle(QQ^\star)^\wedge e_j,e_j\rangle_{\bar{H}}=\sum_{j=1}^\infty \langle QQ^\star e_j^\vee,e_j^\vee\rangle_{H}\leq \|Q\|_{\mathcal{L}_2(H)}^2<\infty,\]
 so that the series in \eqref{cf22} is well defined in $L^2(${\boldmath $\Omega$}$;\bar{H})$, for any $t\geq 0$, and defines a $\bar{H}$-valued Wiener process, with covariance operator $(Q Q^\star)^\wedge$.
 
As we have seen in Section \ref{sec1}, the operator $\bar{L}$ is the generator of the Markov transition semigroup $\bar{S}(t)$ associated with the limiting process 
$\bar{Z}(t)$ defined on the graph $\Gamma$ and introduced in \cite{fw12}. Thus, we can give the following definition.
\begin{Definition}
An adapted process $\bar{u} \in\,L^p(${\boldmath $\Omega$}$;C([0,T];\bar{H}))$ is a {\em mild solution} to equation \eqref{spde-graph} if 
\[\bar{u}(t)=\bar{S}(t)u^{\wedge}_0+\int_0^t \bar{S}(t-s)B(\bar{u}(s))\,ds+\int_0^t\bar{S}(t-s)d\bar{w}^{Q}(s).\]
\end{Definition}

As we are assuming $Q \in\,{\cal L}_2(H)$, then $ \bar{w}^{Q}(t) \in\,L^2(${\boldmath$\Omega$}$,\bar{H})$, for any $t\geq 0$. Moreover, as $\bar{S}(t)$ is a contraction on $\bar{H}$ (see Theorem \ref{teo30}),  the process $w_{\bar{L}}(t)$ defined by
\[w_{\bar{L}}(t):=\int_0^t\bar{S}(t-s)d\bar{w}^{Q}(s),\ \ \ t\geq 0,\] takes values in  $L^p(${\boldmath $\Omega$}$;C([0,T];\bar{H}))$, for any $T>0$ and $p\geq 1$.
Therefore, as the mapping $B:\bar{H}\to \bar{H}$ is Lipschitz-continuous, we have that for any $T>0$ and $p\geq 1$ there exists a unique mild solution $\bar{u} \in\,L^p(${\boldmath $\Omega$}$;C([0,T];\bar{H}))$ to equation \eqref{spde-graph}.

\begin{Theorem}
\label{teo6.2}
Assume that the domain $G$ satisfies assumptions I-IV. Moreover, assume that  the nonlinearity $b:\reals\to\reals$ is Lipschitz-continuous and $Q \in\,{\cal L}_2(H)$. Then, for any $u_0 \in\,C(\bar{G})$, $p\geq 1$  and $0<\tau<T$ we have
\begin{equation}
\label{limfin}
\lim_{\e\to 0}\text{{\em \bf E}}\sup_{t \in\,[\tau,T]}\,|u_\e(t)-\bar{u}(t)^\vee|^p_H=\lim_{\e\to 0}\text{{\em \bf E}}\sup_{t \in\,[\tau,T]}\,|u_\e(t)^\wedge -\bar{u}(t)|^p_{\bar{H}}=0,
\end{equation}
where $u_\e$ and $\bar{u}$ are the unique mild solutions of equations \eqref{eq-G-bis} and \eqref{spde-graph}, respectively.
\end{Theorem}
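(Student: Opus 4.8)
The plan is to push the map $(\cdot)^\vee\colon\bar H\to H$ through the mild formulation of \eqref{spde-graph}, subtract the result from the mild formulation of \eqref{eq-G-bis}, and close the resulting identity by a Gronwall argument whose driving terms are controlled via \eqref{cf18}, \eqref{cf2000} and a factorization of the stochastic convolution. Two preliminary reductions: since $u_\e(t)^\wedge-\bar u(t)=(u_\e(t)-\bar u(t)^\vee)^\wedge$ by \eqref{cf46}, Lemma \ref{l1bis} shows the second limit in \eqref{limfin} follows from the first, so it suffices to prove $\mathbb E\sup_{t\in[\tau,T]}|u_\e(t)-\bar u(t)^\vee|_H^p\to0$; and, by Jensen's inequality, it suffices to treat $p$ large. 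Using \eqref{cf36}, \eqref{cf46}, the identity $(B(\bar u(s)))^\vee=B(\bar u(s)^\vee)$ (both $B$ and $(\cdot)^\vee$ act by composition), and the fact that $(\cdot)^\vee$ is bounded and hence commutes with the deterministic and stochastic integrals, one checks that $\bar v:=\bar u^\vee$ satisfies in $H$
\[\bar v(t)=\bar S(t)^\vee u_0+\int_0^t\bar S(t-s)^\vee B(\bar v(s))\,ds+\bar W(t)^\vee,\qquad \bar W(t):=\int_0^t\bar S(t-s)\,d\bar w^Q(s).\]
Subtracting the mild equation for $u_\e$ and setting $\theta_\e:=u_\e-\bar v$, $W_\e(t):=\int_0^t S_\e(t-s)\,dw^Q(s)$, we get $\theta_\e(t)=\Theta_\e(t)+\int_0^t S_\e(t-s)\big(B(u_\e(s))-B(\bar v(s))\big)\,ds$ with
\[\Theta_\e(t):=\big(S_\e(t)u_0-\bar S(t)^\vee u_0\big)+\int_0^t\big(S_\e(t-s)-\bar S(t-s)^\vee\big)B(\bar v(s))\,ds+\big(W_\e(t)-\bar W(t)^\vee\big).\]

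The first step is to show that for every $\tau>0$, $\lim_{\e\to0}\mathbb E\sup_{t\in[\tau,T]}|\Theta_\e(t)|_H^p=0$, together with the $\e$-uniform bound $\sup_{\e>0}\mathbb E\sup_{t\in[0,T]}|\Theta_\e(t)|_H^p<\infty$. The latter follows from $\|S_\e(t)\|_{\mathcal L(H)}\leq1$, $\|\bar S(t)^\vee\|_{\mathcal L(H)}\leq1$ (see \eqref{cf2000}), the Lipschitz bound on $B$, the standard a priori bounds $\sup_\e\mathbb E\sup_{[0,T]}|u_\e|_H^p<\infty$, $\mathbb E\sup_{[0,T]}|\bar u|_{\bar H}^p<\infty$, and the fact that $W_\e,\bar W$ lie in $L^p(\Omega;C([0,T];H))$, resp. $L^p(\Omega;C([0,T];\bar H))$, with norms bounded uniformly in $\e$. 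For the vanishing: the term $S_\e(t)u_0-\bar S(t)^\vee u_0$ tends to $0$ uniformly on $[\tau,T]$ by \eqref{cf18}, since $u_0\in C(\bar G)\subset H$; for the integral term, the path $s\mapsto B(\bar v(s))$ is a.s. continuous in $H$ (because $\bar v=\bar u^\vee\in C([0,T];H)$ a.s. and $B$ is Lipschitz), so its range $\mathcal K(\omega)$ is a.s. compact in $H$, and splitting the integral at $s=t-\eta$ ($\eta>0$), the piece over $[t-\eta,t]$ is $\leq2\eta\sup_s|B(\bar v(s))|_H$, while on $[0,t-\eta]$ one has $t-s\geq\eta$ and a standard finite-net argument combined with \eqref{cf18} gives $\sup_{\sigma\in[\eta,T]}\sup_{v\in\mathcal K(\omega)}|(S_\e(\sigma)-\bar S(\sigma)^\vee)v|_H\to0$ a.s.; since both pieces are dominated in $L^p$ by a multiple of $\sup_s|B(\bar v(s))|_H\in L^p(\Omega)$, dominated convergence (first $\e\to0$, then $\eta\to0$) yields the claim. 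Convergence of $W_\e-\bar W^\vee$ is handled next.

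For the stochastic convolution I would use the factorization method. Fix $\alpha>0$ small and $p$ large with $\alpha p>1$ (and $\alpha$ small enough for the bootstrap below), and write $W_\e(t)=c_\alpha\int_0^t(t-s)^{\alpha-1}S_\e(t-s)Y_\e(s)\,ds$ with $Y_\e(s)=\int_0^s(s-r)^{-\alpha}S_\e(s-r)\,dw^Q(r)$, and similarly $\bar W(t)=c_\alpha\int_0^t(t-s)^{\alpha-1}\bar S(t-s)\bar Y(s)\,ds$ with $\bar Y(s)=\int_0^s(s-r)^{-\alpha}\bar S(s-r)\,d\bar w^Q(r)$; all these integrals are well defined uniformly in $\e$ because $\|S_\e(\sigma)\|_{\mathcal L(H)},\|\bar S(\sigma)\|_{\mathcal L(\bar H)}\leq1$ (Theorem \ref{teo30}), $Q\in\mathcal L_2(H)$, $(QQ^\star)^\wedge\in\mathcal L_1(\bar H)$ (established above) and $2\alpha<1$. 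Applying $(\cdot)^\vee$ gives $\bar W(t)^\vee=c_\alpha\int_0^t(t-s)^{\alpha-1}\bar S(t-s)^\vee\bar Y(s)^\vee\,ds$, hence $W_\e(t)-\bar W(t)^\vee=A_\e(t)+B_\e(t)$ with
\[A_\e(t)=c_\alpha\int_0^t(t-s)^{\alpha-1}S_\e(t-s)\big(Y_\e(s)-\bar Y(s)^\vee\big)\,ds,\qquad B_\e(t)=c_\alpha\int_0^t(t-s)^{\alpha-1}\big(S_\e(t-s)-\bar S(t-s)^\vee\big)\bar Y(s)^\vee\,ds.\]
For $A_\e$, the H\"older inequality (using $\alpha p>1$) and $\|S_\e(\sigma)\|_{\mathcal L(H)}\leq1$ give $\mathbb E\sup_{[0,T]}|A_\e(t)|_H^p\leq C\int_0^T\mathbb E|Y_\e(s)-\bar Y(s)^\vee|_H^p\,ds$; since $Y_\e(s)-\bar Y(s)^\vee=\sum_j\int_0^s(s-r)^{-\alpha}(S_\e(s-r)-\bar S(s-r)^\vee)(Qe_j)\,d\beta_j(r)$ is Gaussian in $H$, the It\^o isometry yields $\mathbb E|Y_\e(s)-\bar Y(s)^\vee|_H^2=\int_0^s(s-r)^{-2\alpha}a_\e(s-r)\,dr$ with $a_\e(\sigma):=\sum_j|(S_\e(\sigma)-\bar S(\sigma)^\vee)(Qe_j)|_H^2\leq4\|Q\|_{\mathcal L_2(H)}^2$ and, by \eqref{cf18} and dominated convergence in $j$, $a_\e(\sigma)\to0$ for every $\sigma>0$; hence $\mathbb E|Y_\e(s)-\bar Y(s)^\vee|_H^2\to0$ for each $s$, uniformly bounded, so by Gaussianity and dominated convergence $\mathbb E\sup_{[0,T]}|A_\e(t)|_H^p\to0$. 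For $B_\e$, note $\bar Y$ does not depend on $\e$; a second application of the factorization method — using again only that $(QQ^\star)^\wedge$ is trace class — shows $\bar Y\in L^p(\Omega;C([0,T];\bar H))$, hence $\bar Y^\vee\in L^p(\Omega;C([0,T];H))$ and the range $\{\bar Y(s)^\vee(\omega):s\in[0,T]\}$ is a.s. compact in $H$; then, exactly as for the integral term in the previous paragraph (split at $s=t-\eta$, use the singular weight on $[t-\eta,t]$ and the net argument with \eqref{cf18} on $[0,t-\eta]$), $\mathbb E\sup_{[0,T]}|B_\e(t)|_H^p\to0$. In particular $\mathbb E\sup_{[0,T]}|W_\e(t)-\bar W(t)^\vee|_H^p\to0$.

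Finally I would close the estimate. From $|\theta_\e(t)|_H\leq|\Theta_\e(t)|_H+\mathrm{Lip}(b)\int_0^t|\theta_\e(s)|_H\,ds$ and the $\e$-uniform bound on $\Theta_\e$, Gronwall on $[0,T]$ gives $M:=\sup_\e\big(\mathbb E\sup_{[0,T]}|\theta_\e(t)|_H^p\big)^{1/p}<\infty$. Fixing $\tau'\in(0,T]$, splitting $\int_0^t=\int_0^{\tau'}+\int_{\tau'}^t$, bounding $\int_0^{\tau'}|\theta_\e(s)|_H\,ds\leq\tau'\sup_{[0,T]}|\theta_\e|_H$ and applying Gronwall on $[\tau',T]$,
\[\sup_{t\in[\tau',T]}|\theta_\e(t)|_H\leq C_T\Big(\sup_{t\in[\tau',T]}|\Theta_\e(t)|_H+\tau'\sup_{t\in[0,T]}|\theta_\e(t)|_H\Big);\]
taking $L^p(\Omega)$-norms and letting $\e\to0$ via the first step, $\limsup_{\e\to0}\big(\mathbb E\sup_{[\tau',T]}|\theta_\e|_H^p\big)^{1/p}\leq C_T\,M\,\tau'$. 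For fixed $\tau>0$ and any $\tau'\in(0,\tau]$, monotonicity of $\tau'\mapsto\mathbb E\sup_{[\tau',T]}|\theta_\e|_H^p$ then gives $\limsup_{\e\to0}\mathbb E\sup_{[\tau,T]}|\theta_\e|_H^p\leq(C_T M\tau')^p$, and letting $\tau'\downarrow0$ yields \eqref{limfin}. The main obstacle is the $B_\e$ part of the stochastic convolution: one must produce enough path regularity of the factorized process $\bar Y$ — a.s. continuous $\bar H$-valued paths, extracted from an additional factorization step that uses only the trace-class property of $(QQ^\star)^\wedge$ — so that its range is compact, and then transfer the merely strong (not norm-) convergence \eqref{cf18} over this compact set while keeping control of the singular kernel $(t-s)^{\alpha-1}$ near $s=t$; once \eqref{cf18} and \eqref{cf2000} are in hand, the remaining estimates are routine.
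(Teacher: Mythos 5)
Your proof is correct, and its skeleton coincides with the paper's: reduce to the first limit in \eqref{limfin} via Lemma \ref{l1bis}, subtract the two mild formulations, feed the convergence \eqref{cf18} (together with the contraction properties \eqref{cf2000} and Theorem \ref{teo30}) into a factorization of the stochastic convolution, and close with Gronwall. The genuine differences are in the two delicate spots. For the stochastic convolution you split $S_\epsilon Y_\epsilon-\bar S^\vee\bar Y^\vee=S_\epsilon(Y_\epsilon-\bar Y^\vee)+(S_\epsilon-\bar S^\vee)\bar Y^\vee$, so the ``difference outside'' acts on the $\epsilon$-independent process $\bar Y^\vee$, and you handle it by almost sure compactness of its range plus a finite-net argument; this forces you to prove that the factorized process $\bar Y$ has continuous $\bar H$-valued paths, which is true but needs the nested factorization identity $\bar Y_\alpha(s)=c\int_0^s(s-\sigma)^{\beta-\alpha-1}\bar S(s-\sigma)\bar Y_\beta(\sigma)\,d\sigma$ with $\alpha<\beta<1/2$ and $(\beta-\alpha)p>1$ — you flag this as the main obstacle but should write it out. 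The paper instead keeps the $\epsilon$-dependent factorized process $Y^\epsilon_{\alpha,2}$ under the difference of semigroups and replaces compactness by a time-grid discretization, using only the mean-square H\"older estimate $\mathbb{E}|Y^\epsilon_{\alpha,2}(s)-Y^\epsilon_{\alpha,2}(r)|_H^q\le c_q(s-r)^{(1-2\alpha)q/2}$; that route needs no path regularity of the factorized process, at the cost of the bookkeeping with the mesh $\delta$ and the extra $\sqrt\tau$ term. In the endgame, the paper derives the explicit comparison inequality \eqref{cf41} and then shows the whole remainder, integrated over $[0,T]$, vanishes (using \eqref{cf2001} and dominated convergence in $s$ for the initial-datum term), whereas you combine an $\epsilon$-uniform a priori bound with a Gronwall estimate on $[\tau',T]$ and let $\tau'\downarrow0$; your variant is a clean way to quarantine the non-convergence at $t=0$, the paper's gives a slightly more quantitative remainder. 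Your treatment of the deterministic convolution (pathwise compactness and a.s. uniform convergence, then dominated convergence in $\omega$) is also a legitimate alternative to the paper's argument, which simply applies \eqref{cf18} pointwise in $s$ under the expectation and integral.
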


Before proving \eqref{limfin} in the full generality of Theorem \eqref{teo6.2}, we prove \eqref{limfin} in the case $B=0$ and $u_0=0$.

\begin{Lemma}
Under the same assumption of Theorem \eqref{teo6.2}, for any $T>0$ and $p\geq 1$ we have
\begin{equation}
\label{cf2001}
\lim_{\e\to 0}\text{{\em  \bf E}}\sup_{t \in\,[0,T]}|w_\e(t)-w_{\bar{L}}(t)^\vee|_H^p=0.
\end{equation}
\end{Lemma}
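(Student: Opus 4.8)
The plan is to combine the stochastic factorization method with the convergence \eqref{cf18}, treating separately the region near $t=0$ (where \eqref{cf18} degenerates) and the region away from it. Since $(\Omega,\P)$ is a probability space, $|\cdot|_{L^p}\le|\cdot|_{L^{p_0}}$ for $p\le p_0$, so it suffices to argue for $p$ large; fix $p>2$, $\a\in(1/p,1/2)$, $c_\a=\sin(\pi\a)/\pi$, and set $R_\e(\si):=S_\e(\si)-\bar S(\si)^\vee\in{\cal L}(H)$. Because $S_\e(\si)$ and $\bar S(\si)^\vee$ are contractions on $H$ (the latter by \eqref{cf2000}), one has $\|R_\e(\si)\|_{{\cal L}(H)}\le2$ and $\|S_\e(\si)Q\|_{{\cal L}_2(H)},\|\bar S(\si)^\vee Q\|_{{\cal L}_2(H)}\le\|Q\|_{{\cal L}_2(H)}$. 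Recalling that $w_\e(t)=\int_0^t S_\e(t-s)\,dw^Q(s)$ and $w_{\bar L}(t)=\int_0^t\bar S(t-s)\,d\bar w^Q(s)$, the factorization identity gives
\[w_\e(t)=c_\a\int_0^t(t-s)^{\a-1}S_\e(t-s)Y_\e(s)\,ds,\qquad w_{\bar L}(t)^\vee=c_\a\int_0^t(t-s)^{\a-1}\bar S(t-s)^\vee\bar Y(s)\,ds,\]
where $Y_\e(s)=\int_0^s(s-r)^{-\a}S_\e(s-r)\,dw^Q(r)$ and $\bar Y(s)=\int_0^s(s-r)^{-\a}\bar S(s-r)^\vee\,dw^Q(r)$; the second identity follows by applying $(\cdot)^\vee$ to the factorization of $w_{\bar L}$ in $\bar H$ — legitimate since $\bar S$ is a $C_0$-contraction semigroup on $\bar H$ by Theorem \ref{teo30} — together with $(\bar S(\si)f)^\vee=\bar S(\si)^\vee f^\vee$. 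Subtracting,
\[w_\e(t)-w_{\bar L}(t)^\vee=c_\a\int_0^t(t-s)^{\a-1}\le[R_\e(t-s)\le(Y_\e(s)-\bar Y(s)\r)+\bar S(t-s)^\vee\le(Y_\e(s)-\bar Y(s)\r)+R_\e(t-s)\bar Y(s)\r]\,ds.\]

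Next I would establish three facts. First, by the Burkholder inequality for Hilbert-space valued stochastic integrals with deterministic integrands, $\sup_{\e,\,s\in[0,T]}\E|Y_\e(s)|_H^p<\infty$ and $\sup_{s\in[0,T]}\E|\bar Y(s)|_H^p=:M<\infty$; moreover $\bar Y(s)$ is the image under the isometry $(\cdot)^\vee$ of the stochastic convolution $\int_0^s(s-r)^{-\a}\bar S(s-r)\,d\bar w^Q(r)$ in $\bar H$ (a convolution of the $C_0$-semigroup $\bar S$ against the $\bar H$-valued Wiener process $\bar w^Q$, whose covariance $(QQ^\star)^\wedge$ is trace class by Lemma \ref{l2.3}), so $s\mapsto\bar Y(s)$ is continuous from $[0,T]$ into $L^p(\Omega;H)$ and $\K:=\{\bar Y(s):s\in[0,T]\}$ is compact there. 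Second, for $\tau>0$ put $\eta_\e(\tau):=\sup_{\si\in[\tau,T]}\|R_\e(\si)Q\|^2_{{\cal L}_2(H)}$; since $|R_\e(\si)Qe_k|_H^2\le4|Qe_k|_H^2$ with $\sum_k|Qe_k|_H^2<\infty$ and $\sup_{\si\in[\tau,T]}|R_\e(\si)Qe_k|_H\to0$ by \eqref{cf18}, dominated convergence gives $\eta_\e(\tau)\to0$ as $\e\to0$. Third, writing $Y_\e(s)-\bar Y(s)=\int_0^s(s-r)^{-\a}R_\e(s-r)\,dw^Q(r)$ and splitting the integral at $\tau$, the Burkholder inequality yields
\[\sup_{s\in[0,T]}\E|Y_\e(s)-\bar Y(s)|_H^p\le c_p\le(\frac{4\|Q\|^2_{{\cal L}_2(H)}\tau^{1-2\a}+\eta_\e(\tau)T^{1-2\a}}{1-2\a}\r)^{p/2},\]
hence $\lim_{\e\to0}\sup_{s\in[0,T]}\E|Y_\e(s)-\bar Y(s)|_H^p=0$ (let $\e\to0$, then $\tau\to0$).

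The last step is to estimate the three terms, using repeatedly that for $g\in L^p([0,T];H)$ and a strongly measurable family $\{\mathcal S(\si)\}\subset{\cal L}(H)$ with $\sup_\si\|\mathcal S(\si)\|_{{\cal L}(H)}\le2$ one has $\sup_{t\in[0,T]}\big|\int_0^t(t-s)^{\a-1}\mathcal S(t-s)g(s)\,ds\big|_H\le C\|g\|_{L^p([0,T];H)}$, with $C=C(\a,p,T)$ (this needs only $(\a-1)q>-1$, $1/p+1/q=1$, i.e. $\a>1/p$). Applying it with $\mathcal S=R_\e$, resp. $\mathcal S=\bar S(\cdot)^\vee$, and $g=Y_\e-\bar Y$, the first two terms contribute to $\E\sup_{t\in[0,T]}|\,\cdot\,|_H^p$ at most $C\int_0^T\E|Y_\e(s)-\bar Y(s)|_H^p\,ds\to0$. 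For the third term $c_\a\int_0^t(t-s)^{\a-1}R_\e(t-s)\bar Y(s)\,ds$ I split the $s$-integral at $t-\tau$: the part over $[(t-\tau)_+,t]$ is bounded (by the same estimate restricted to intervals of length $\tau$) by $C\tau^{\a-1/p}\|\bar Y\|_{L^p([0,T];H)}$, so its $p$-th moment sup is $\le C\tau^{p\a-1}TM\to0$ as $\tau\to0$, uniformly in $\e$ (since $p\a-1>0$ and $\bar Y$ is $\e$-independent); on the part over $[0,(t-\tau)_+]$ one has $t-s\in[\tau,T]$, so $|R_\e(t-s)\bar Y(s)|_H\le g_\e(s):=\sup_{\si\in[\tau,T]}|R_\e(\si)\bar Y(s)|_H$ and the estimate gives $\E\sup_t|\,\cdot\,|_H^p\le CT\sup_{s\in[0,T]}\E g_\e(s)^p$. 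Since $\xi\mapsto\big\|\sup_{\si\in[\tau,T]}|R_\e(\si)\xi|_H\big\|_{L^p(\Omega)}$ is $2$-Lipschitz on $L^p(\Omega;H)$ and, for fixed $\xi$, tends to $0$ as $\e\to0$ (by \eqref{cf18} and dominated convergence), it tends to $0$ uniformly on the compact set $\K$, i.e. $\sup_s\E g_\e(s)^p\to0$. Collecting the three bounds, $\limsup_{\e\to0}\E\sup_{t\in[0,T]}|w_\e(t)-w_{\bar L}(t)^\vee|_H^p\le C\tau^{p\a-1}$ for every $\tau>0$, and letting $\tau\to0$ yields \eqref{cf2001}.

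The main obstacle is that \eqref{cf18} provides only strong — not operator-norm — convergence $S_\e(t)\to\bar S(t)^\vee$, and only uniformly away from $t=0$; hence $R_\e(t-s)$ cannot be controlled directly when applied to the $\e$-dependent field $Y_\e(s)$. The decomposition $Y_\e=(Y_\e-\bar Y)+\bar Y$ is what resolves this: it replaces $Y_\e$ either by the $L^p$-small remainder $Y_\e-\bar Y$ (a stochastic integral whose Hilbert--Schmidt integrand is small by \eqref{cf18}, so Burkholder applies) or by the fixed family $\{\bar Y(s)\}$, which is precompact in $L^p(\Omega;H)$ — and on a compact set strong convergence of $R_\e$ does become uniform — while the short-time layer $t-s<\tau$ is absorbed using only $\|R_\e\|_{{\cal L}(H)}\le2$ and the smallness of the kernel $(t-s)^{\a-1}$ over short intervals. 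The two requirements $\a>1/p$ (for the $L^p([0,T];H)\to C([0,T];H)$ bound) and $\a<1/2$ (for integrability of $(s-r)^{-2\a}$ and the time continuity of $\bar Y$) are exactly what makes the scheme close.
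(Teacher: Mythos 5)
Your proof is correct, and it opens exactly as the paper does: the factorization identity for both $w_\e$ and $w_{\bar L}(t)^\vee$ (the latter legitimate since $(\cdot)^\vee$ is an isometry commuting with $\bar S$ through \eqref{cf46}, and $\bar w^Q$ is driven by the same $\beta_j$'s as $w^Q$), and the treatment of $Y_\e-\bar Y$ — which is precisely the paper's $Y^\e_{\a,1}$ — by the contraction bounds, \eqref{cf18} and dominated convergence, with a cut at the singularity. The genuine divergence is in the remaining term $\int_0^t(t-s)^{\a-1}\bigl[S_\e(t-s)-\bar S(t-s)^\vee\bigr]\,(\cdot)\,ds$: the paper keeps the $\e$-dependent field $Y^\e_{\a,2}=Y_\e$ inside it and controls the supremum in $t$ by discretizing time on a mesh $\d$, using an $L^p$ increment estimate for $Y^\e_{\a,2}$ together with \eqref{cf18} at the finitely many grid points plus a $\sqrt{\tau}$ boundary layer; you instead subtract once more so that the operator difference only ever acts on the $\e$-independent family $\{\bar Y(s)\}_{s\in[0,T]}$, which is compact in $L^p(\Omega;H)$, and you upgrade the strong convergence \eqref{cf18} to uniform convergence on that compact set via the $2$-Lipschitz property, absorbing the short-time layer with the kernel and the condition $\a>1/p$. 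What each route buys: yours completely avoids the time-increment estimate for $Y^\e_{\a,2}$ — which in the paper is stated only for the piece $\int_r^s$ of the increment and as written omits the contribution of $\int_0^r\bigl[(s-\si)^{-\a}S_\e(s-\si)-(r-\si)^{-\a}S_\e(r-\si)\bigr]dw^Q(\si)$, so it needs a small repair — at the price of invoking the $L^p(\Omega;H)$-continuity of $s\mapsto\bar Y(s)$, i.e.\ strong continuity of $\bar S(t)$ on $\bar H$, which Theorem \ref{teo30} does not state (it gives only the contraction property), although the paper tacitly uses it as well (e.g.\ when asserting $w_{\bar L}\in L^p(\Omega;C([0,T];\bar H))$) and it follows from path-continuity of $\bar Z$, density of $C(\bar\Gamma)$ in $\bar H$ and invariance of $\nu$; you should state this explicitly, and also note the (routine, by separability and strong continuity in $\si$) measurability of $s\mapsto\sup_{\si\in[\tau,T]}|R_\e(\si)\bar Y(s)|_H$ before integrating it.
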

\begin{proof}
For any $t \in\,[0,T]$ and $\a \in\,(0,1/2)$, we have
\[\begin{array}{l}
\ds{\frac{\pi}{\sin \pi \a}\le(w_\e(t)-w_{\bar{L}}(t)^\vee\r)}\\
\vs
\ds{=\int_0^t(t-s)^{\a-1}\bar{S}(t-s)^\vee Y^\e_{\a,1}(s)\,ds+\int_0^t (t-s)^{\a-1}\le[S_\e(t-s)-\bar{S}(t-s)^\vee\r]Y^\e_{\a,2}(s)\,ds,}
\end{array}\]
where
\[Y^\e_{\a,1}(s):=\int_0^s (s-\si)^{-\a}\le[S_\e(s-\si)-\bar{S}(s-\si)^\vee\r]\,dw^Q(\si),\]
and
\[Y^\e_{\a,2}(s):=\int_0^s(s-\si)^{-\a}S_\e(s-\si)dw^Q(\si).\]
Thanks to \eqref{cf2000}, we have that $\bar{S}(t)^\vee$ is a contraction on $H$. Then, that for any $p\geq 1/\a$ it holds
\begin{equation}
\label{cf2003}
\begin{array}{l}
\ds{\text{{ \bf E}}\sup_{t \in\,[0,t]}\,|w_\e(t)-w_{\bar{L}}(t)|_H^p\leq c_{p,\a}(T)\int_0^T\text{{ \bf E}}|Y^\e_{\a,1}(s)|_H^p\,ds}\\
\vs
\ds{+c_{p,\a}(T)\text{{ \bf E}}\sup_{t \in\,[0,T]}\,\int_0^t|\le[S_\e(t-s)-\bar{S}(t-s)^\vee\r]Y^\e_{\a,2}(s)|_H^p\,ds.}
\end{array}\end{equation}

For any fixed $s\geq 0$ and $\a \in\,(0,1/2)$, we have
\[\begin{array}{l}
\ds{\text{{ \bf E}}|Y^\e_{\a,1}(s)|_H^p=c_p\le(\sum_{j=1}^\infty\int_0^s\si^{-2\a}\,\le|S_\e(\si)Qe_j-\bar{S}(\si)^\vee Q e_j\r|_H^2\,d\si\r)^{p/2}}
\end{array}\]
As both $S_\e(\si)$ and $\bar{S}(\si)^\vee$ are contraction in $H$, we have 
\[\begin{array}{l}
\ds{\sum_{j=1}^\infty\le|S_\e(\si)Qe_j-\bar{S}(\si)^\vee Q e_j\r|_H^2\leq c\sum_{j=1}^\infty\,|Qe_j|^2_H,}
\end{array}\]
and then, as $Q \in\,{\cal L}_2(H)$, for any $\eta>0$ we can find $n_\eta \in\,\nat$ such that
\begin{equation}
\label{cf24}
\sum_{j=n_\eta+1}^\infty\int_0^s\si^{-2\a}\le|S_\e(\si)Qe_j-\bar{S}(\si)^\vee Q e_j\r|_H^2\,d\si<\eta.
\end{equation}
Once fixed $n_\eta$,  due to \eqref{cf18} and the dominated convergence theorem we have  that 
\[\lim_{\e\to 0}\,\sum_{j=1}^{n_\eta}\int_0^s\le|S_\e(\si)Qe_j-\bar{S}(\si)^\vee Q e_j\r|_H^2\,d\si=0,\]
and this, together with \eqref{cf24}, due to the arbitrariness of $\eta$ implies that 
\begin{equation}
\label{cf23}
\lim_{\e\to 0}\int_0^T\text{{ \bf E}}|Y^\e_{\a,1}(s)|_H^p\,ds=0.
\end{equation}

Next, for any $0<\tau<t\leq T$ we have
\[\begin{array}{l}
\ds{\int_0^t|\le[S_\e(t-s)-\bar{S}(t-s)^\vee\r]Y^\e_{\a,2}(s)|_H^p\,ds}\\
\vs
\ds{=\int_0^{t-\tau}|\le[S_\e(t-s)-\bar{S}(t-s)^\vee\r]Y^\e_{\a,2}(s)|_H^p\,ds+\int_{t-\tau}^t|\le[S_\e(t-s)-\bar{S}(t-s)^\vee\r]Y^\e_{\a,2}(s)|_H^p\,ds}\\
\vs
\ds{\leq \int_0^{t-\tau}|\le[S_\e(t-s)-\bar{S}(t-s)^\vee\r]Y^\e_{\a,2}(s)|_H^p\,ds+c\,\sqrt{\tau}\le(\int_0^T|Y^\e_{\a,2}(s)|_H^{2p}\,ds\r)^{\frac 12}.}
\end{array}\]
Now, if for any $\d>0$ we denote $k(T,\d)=[T/\d]$, for any $t\leq T$  we have
\[\begin{array}{l}
\ds{\int_0^{t-\tau}|\le[S_\e(t-s)-\bar{S}(t-s)^\vee\r]Y^\e_{\a,2}(s)|_H^p\,ds}\\
\vs
\ds{\leq c_p\sum_{k=1}^{k(T,\d)}\sup_{s \in\,[\tau,T]}|\le[S_\e(s)-\bar{S}(s)^\vee\r]Y^\e_{\a,2}(k\,\d)|_H^p
+c_p\sum_{k=1}^{k(T,\d)}\int_{k\d}^{(k+1)\d}|Y^\e_{\a,2}(s)-Y^\e_{\a,2}(k\,\d)|_H^p\,ds.}
\end{array}\]
This implies that
\[\begin{array}{l}
\ds{\text{{ \bf E}}\sup_{t \in\,[0,T]}\int_0^t|\le[S_\e(t-s)-\bar{S}(t-s)^\vee\r]Y^\e_{\a,2}(s)|_H^p\,ds}\\
\vs
\ds{\leq c_p\sum_{k=1}^{k(T,\d)}\text{{ \bf E}}\sup_{s \in\,[\tau,T]}|\le[S_\e(s)-\bar{S}(s)^\vee\r]Y^\e_{\a,2}(k\,\d)|_H^p+c_p\sum_{k=1}^{k(T,\d)}\int_{k\d}^{(k+1)\d}\text{{ \bf E}}|Y^\e_{\a,2}(s)-Y^\e_{\a,2}(k\,\d)|_H^p\,ds}\\
\vs
\ds{+c\,\sqrt{\tau}\le(\int_0^T\text{{ \bf E}}|Y^\e_{\a,2}(s)|_H^{2p}\,ds\r)^{\frac 12}.}
\end{array}\]
As $Q \in\,{\cal L}_2(H)$, for any $0\leq r<s$, $\e>0$ and $q\geq 1$, we have
\[\text{{ \bf E}}|Y^\e_{\a,2}(s)-Y^\e_{\a,2}(r)|_H^q=c_q\le(\sum_{j=1}^\infty\int_r^s\si^{-2\a}\,\le|S_\e(\si)Qe_j\r|_H^2\,d\si\r)^{q/2}\leq c_q (s-r)^{(1-2\a)\frac q2},\]
so that
\[\begin{array}{l}
\ds{\text{{ \bf E}}\sup_{t \in\,[0,T]}\int_0^t|\le[S_\e(t-s)-\bar{S}(t-s)^\vee\r]Y^\e_{\a,2}(s)|_H^p\,ds\leq c_p(T) \d^{(1-2\a)\frac p2}+c_p(T)\sqrt{\tau}}\\
\vs
\ds{+c_p\sum_{k=1}^{k(T,\d)}\text{{ \bf E}}\sup_{s \in\,[\tau,T]}|\le[S_\e(s)-\bar{S}(s)^\vee\r]Y^\e_{\a,2}(k\,\d)|_H^p.}
\end{array}\]
Therefore, if for any $\eta>0$ if we pick $\d_\eta,\ \tau_\eta>0$ such that 
\[c_p(T) \d_\eta^{(1-2\a)\frac p2}+c_p(T)\sqrt{\tau_\eta}<\frac \eta 2,\]
we have
\[\begin{array}{l}
\ds{\text{{ \bf E}}\sup_{t \in\,[0,T]}\int_0^t|\le[S_\e(t-s)-\bar{S}(t-s)^\vee\r]Y^\e_{\a,2}(s)|_H^p\,ds}\\
\vs
\ds{\leq \frac \eta 2+c_p\sum_{k=1}^{k(T,\d_\eta)}\text{{ \bf E}}\sup_{s \in\,[\tau_\eta,T]}|\le[S_\e(s)-\bar{S}(s)^\vee\r]Y^\e_{\a,2}(k\,\d_\eta)|_H^p.}
\end{array}\]
Thanks to \eqref{cf18} and the dominated convergence theorem, due to the arbitrariness of $\eta>0$ this allows to conclude that
\[\lim_{\e\to 0}\text{{ \bf E}}\sup_{t \in\,[0,T]}\int_0^t|\le[S_\e(t-s)-\bar{S}(t-s)^\vee\r]Y^\e_{\a,2}(s)|_H^p\,ds=0.\]
This, together with \eqref{cf23}, thanks to \eqref{cf2003} allows to get \eqref{cf2001}.

\end{proof}

\begin{proof}[Proof of Theorem  \ref{teo6.2}] As we have seen in Section \ref{sec3}, since $u_\e$ is a mild solution to equation \eqref{eq-G-bis}, we have that $u_\e$ satisfied the following equation
\[u_\e(t)=S_\e(t)u_0+\int_0^t S_\e(t-s)B(u_\e(s))\,ds+w_{\e}(t),\]
where
\[w_{\e}(t)=\int_0^t S_\e(t-s)dw^Q(s).\]
This implies that for any $p\geq 1$ and $T>0$
\begin{equation}
\label{cf40}
\begin{array}{l}
\ds{|u_\e(t)-\bar{u}(t)^\vee|^p_H\leq c_p\,|S_\e(t)u_0-\bar{S}(t)^\vee u_0|_H^p}\\
\vs
\ds{+c_p\,T^{p-1}\int_0^t |S_\e(t-s)B(u_\e(s))-(\bar{S}(t-s)B(\bar{u}(s)))^\vee|_H^p\,ds+c_p\,\le|w_{\e}(t)-w_{\bar{L}}(t)^\vee\r|_H^p.}
\end{array}\end{equation}

Now, for any $\e>0$, $p\geq 1$ and $0\leq s\leq t$, we have
\[\begin{array}{l}
\ds{|S_\e(t-s)B(u_\e(s))-(\bar{S}(t-s)B(\bar{u}(s)))^\vee|_H^p\leq c_p |S_\e(t-s)\le(B(u_\e(s))-B(\bar{u}(s))^\vee\r)|_H^p}\\
\vs
\ds{+c_p\,|S_\e(t-s)B(\bar{u}(s))^\vee-(\bar{S}(t-s)B(\bar{u}(s)))^\vee|_H^p}\\
\vs
\ds{\leq c_p|B(u_\e(s))-B(\bar{u}(s))^\vee|_H^p+c_p\,|S_\e(t-s)B(\bar{u}(s))^\vee-(\bar{S}(t-s)B(\bar{u}(s)))^\vee|_H^p.}
\end{array}\]
Since $B(\bar{u}(s))^\vee=B(\bar{u}(s)^\vee)$, we have
\[|B(u_\e(s))-B(\bar{u}(s))^\vee|_H^p\leq c_p |u_\e(s)-\bar{u}(s)^\vee|_H^p,\]
and then
\begin{equation}
\label{cf30}
\begin{array}{l}
\ds{\int_0^t|S_\e(t-s)B(u_\e(s))-(\bar{S}(t-s)B(\bar{u}(s)))^\vee|_H^p\,ds\leq c_p \int_0^t|u_\e(s)-\bar{u}(s)^\vee|_H^p\,ds}\\
\vs
\ds{+c_p \int_0^t|S_\e(t-s)B(\bar{u}(s))^\vee-(\bar{S}(t-s)B(\bar{u}(s)))^\vee|_H^p\,ds.}
\end{array}
\end{equation}
Therefore, due to \eqref{cf40},
we have
\[|u_\e(t)-\bar{u}(t)^\vee|^p_H\leq R^\e_p(t)+c_p \int_0^t|u_\e(s)-\bar{u}(s)^\vee|_H^p\,ds,\]
where 
\[\begin{array}{l}
\ds{R^\e_p(t):=c_p\,|S_\e(t)u_0-\bar{S}(t)^\vee u_0|_H^p+
c_p\,\le|w_{\e}(t)-w_{\bar{L}}(t)^\vee\r|_H^p}\\
\vs
\ds{+c_p \int_0^t|S_\e(t-s)B(\bar{u}(s))^\vee-(\bar{S}(t-s)B(\bar{u}(s)))^\vee|_H^p\,ds.}
\end{array}\]
By comparison, this yields
\begin{equation}
\label{cf41}
|u_\e(t)-\bar{u}(t)^\vee|^p_H\leq \int_0^t e^{c_p(t-s)}R^\e_p(s)\,ds+c_p\,R_p^\e(t),
\end{equation}
so that
\[\sup_{t \in\,[\tau,T]}|u_\e(t)-\bar{u}(t)^\vee|^p_H\leq c_p(T)\int_0^TR^\e_p(s)\,ds+c_p\,\sup_{t \in\,[\tau,T]}R_p^\e(t).\]
Now, for any $0<\tau<t\leq T$, we have
\[\begin{array}{l}
\ds{\int_0^t|S_\e(t-s)B(\bar{u}(s))^\vee-(\bar{S}(t-s)B(\bar{u}(s)))^\vee|_H^p\,ds}\\
\vs
\ds{\leq c_p\int_0^{t-\tau}|S_\e(t-s)B(\bar{u}(s))^\vee-(\bar{S}(t-s)B(\bar{u}(s)))^\vee|_H^p\,ds+c_p\int_{t-\tau}^t\le(1+|\bar{u}(s)|_{\bar{H}}^p\r)\,ds}\\
\vs
\ds{\leq c_p\int_0^T\sup_{r \in\,[\tau,T]}|S_\e(r)B(\bar{u}(s))^\vee-(\bar{S}(r)B(\bar{u}(s)))^\vee|_H^p\,ds+c_p\sqrt{\tau}\le(\int_0^T\le(1+|\bar{u}(s)|_{\bar{H}}^{2p}\r)\,ds\r)^{\frac 12}.}
\end{array}
\]
This means that
\[\begin{array}{l}
\ds{{\text{\bf E}}\sup_{t \in\,[0,T]}\int_0^t|S_\e(t-s)B(\bar{u}(s))^\vee-(\bar{S}(t-s)B(\bar{u}(s)))^\vee|_H^p\,ds}\\
\vs
\ds{\leq c_p\int_0^T\sup_{r \in\,[\tau,T]}|S_\e(r)B(\bar{u}(s))^\vee-(\bar{S}(r)B(\bar{u}(s)))^\vee|_H^p\,ds+c_p(T){\text{\bf E}}|\bar{u}|_{C([0,T];\bar{H})}^{p}\sqrt{\tau}. }
\end{array}\]
Hence,  for any $\eta>0$ we fix $\tau_\eta>0$ such that
\[\begin{array}{l}
\ds{{\text{\bf E}}\sup_{t \in\,[0,T]}\int_0^t|S_\e(t-s)B(\bar{u}(s))^\vee-(\bar{S}(t-s)B(\bar{u}(s)))^\vee|_H^p\,ds}\\
\vs
\ds{\leq \frac \eta 2+c_p\int_0^T\sup_{r \in\,[\tau_\eta,T]}|S_\e(r)B(\bar{u}(s))^\vee-(\bar{S}(r)B(\bar{u}(s)))^\vee|_H^p\,ds.}
\end{array}\]
As $\eta$ is arbitrary, due to \eqref{cf18} we can conclude that
\[\lim_{\e\to 0}{\text{\bf E}}\sup_{t \in\,[0,T]}\int_0^t|S_\e(t-s)B(\bar{u}(s))^\vee-(\bar{S}(t-s)B(\bar{u}(s)))^\vee|_H^p\,ds=0.\]
This, together with \eqref{cf18} and \eqref{cf2001}, implies that
\[\lim_{\e\to 0}{\text{\bf E}}\sup_{t \in\,[\tau,T]}R^\e_p(t)=0,\]
and due to \eqref{cf41} we can conclude our proof.
 
\end{proof}

\section{From the SPDE on the graph $\Gamma$ to the SPDE on the narrow channel $G_\e$}

Let us consider the equation
\begin{equation}
\label{spde-graph-bis}
\frac{\partial \bar{u}}{\partial t}(t,x,k)=\bar{L} \bar{u}(t,x,k)+b(\bar{u}(t,x,k))+\frac{\partial w^A}{\partial t}(t,x,k),\ \ \ \ \bar{u}(0,x,k)=f_0(x,k),
\end{equation}
where $f_0 \in\,C(\Gamma)$ and  $\bar{L}$ is the second order differential operator on $\Gamma$, introduced in Section \ref{sec1}. Here $w^A(t)$ is a cylindrical Wiener process defined by
\begin{equation}
\label{cf22bis}
w^A(t)=\sum_{j=1}^\infty A f_j\,\beta_j(t),\ \ \ \ t\geq 0,
\end{equation}
where  $\{f_j\}_{j \in\,\nat}$ is a complete orthonormal system in $\bar{H}$, $\{\beta_j\}_{j \in\,\nat}$ is a sequence of independent standard Brownian motions, and $A$ is a bounded linear operator on $\bar{H}$.

Due to \eqref{cf46} and \eqref{cf36}, 
we have
\[A f=(A^\vee f^\vee)^\wedge,\ \ \ f \in\,\bar{H}.\]
Now, as we have seen in Section \ref{sec2}, any $u \in\,H$ can be written as $u=u_1+u_2$, with $u_1 \in\,K_1$ and $u_2 \in\,K_2$. Therefore, if we define
\[Q u=A^\vee u_1,\ \ \ \ u \in\,H,\]
we have that 
\[|Qu|_H=|A^\vee u_1|_H\leq \|A^\vee\|_{{\cal L}(H)}|u_1|_H\leq \|A\|_{{\cal L}(\bar{H})}|u|_H,\]
so that
$Q \in\,{\cal L}(H)$. Moreover if $\{g_i\}_{i \in\,\nat}$ is a complete orthonormal system in $K_2$, we have that $\{h_k\}_{k \in\,\nat}:=\{f_j^\vee\}_{j \in\,\nat}\cup \{g_i\}_{i \in\,\nat}$ is a complete orthonormal system in $H$ and then, thanks to Lemma \ref{l1bis}, we have
\[\begin{array}{l}
\ds{\|Q\|_{{\cal L}_2(H)}^2=\sum_{k \in\,\nat}|Q h_k|_H^2=\sum_{j \in\,\nat}|Q f_j^\vee|_H^2}\\
\vs
\ds{=\sum_{j \in\,\nat}|A^\vee  f_j^\vee|_H^2=\sum_{j \in\,\nat}|(A^\vee f_j^\vee)^\wedge|^2_{\bar{H}}=\sum_{j \in\,\nat}|Af_j|_{\bar{H}}^2=\|A\|^2_{{\cal L}_2(\bar{H})}.}
\end{array}\]
This means that  $A \in\,{\cal L}_2(\bar{H})$ if and only if $Q \in\,{\cal L}_2(H)$.

Thus, we can introduce the stochastic PDE in the fixed domain $G$
\begin{equation}
\label{eq-G-bis-tris}
\le\{\begin{array}{l}
\ds{\frac{\partial u_\e}{\partial t}(t,x,y)={\cal L}_\e u_\e(t,x,y)+b(u_\e(t,x,y))+\frac{\partial w^{Q}}{\partial t}(t,x,y),\ \ \ \ \ (x,y) \in\,G,}\\
\vs
\ds{\nabla u_\e(t,x,y)\cdot \si_\e \nu(x,y)=0,\ \ \ (x,y) \in\,\partial G,\ \ \ \ \ \ \ u_\e(0,x,y)=f_0^\vee(x,y),}
\end{array}\r.
\end{equation}
where $w^Q(t)$ is the $H$-valued Wiener process defined by
\[w^Q(t)=\sum_{k=1}^\infty Q h_k\,\beta^\prime_k(t)=\sum_{j=1}^\infty Qf_j^\vee \beta_j(t),\ \ \ \ t\geq 0,\]
and $\{\beta^\prime_k\}_{k \in\,\nat}:=\{\beta_j\}_{j \in\,\nat}\cup \{\beta^{\prime\prime}_i\}_{i \in\,\nat}$, for some sequence $\{\beta^{\prime \prime}_i(t)\}_{i \in\,\nat}$ of independent Brownian motions, independent of the sequence $\{\beta_j\}_{j \in\,\nat}$. 

Since for any $i,j \in\,\nat$
\[(Q f_j^\vee)^\wedge=(A^\vee f_j^\vee)^\wedge =A f_j,\ \ \ \ \ \ \ Q g_i=0,\]
we have that 
\[w^A(t)= \sum_{k=1}^\infty (Q h_k)^\wedge \,\beta^\prime_k(t).\]
This means that we are exactly in the situation covered by Theorem \ref{teo6.2} and we have that for any $t>0$ and $p\geq 1$
\begin{equation}
\label{cf500}
\lim_{\e\to 0}\text{ \bf E}\,|u_\e(t)-\bar{u}(t)^\vee|^p_H=\lim_{\e\to 0}\text{\bf E}\,|u_\e(t)^\wedge -\bar{u}(t)|^p_{\bar{H}}=0.
\end{equation}

It is important to notice that \eqref{cf500} follows directly from \eqref{limite} and does not require all what we have done in Sections \ref{app1} and \ref{sec4}.

Finally, if we set
\[v_\e(t,x,y)=\sqrt{\e}\,(J_{\e,1} u_\e)(t,x,y)=u_\e(t,x,y/\e),\ \ \ \ (x,y) \in\,G_\e,\]
we have that $v_\e$ satisfies the equation in the narrow channel $G_\e$
\begin{equation}
\label{eq-Geps-tris}
\le\{\begin{array}{l}
\ds{\frac{\partial v_\e}{\partial t}(t,x,y)=\frac 12\,\Delta v_\e(t,x,y)+b(v_\e(t,x,y))+\sqrt{\e}\,\frac{\partial w^{Q_\e}}{\partial t}(t,x,y),\ \ \ \ \ (x,y) \in\,G_\e,}\\
\vs
\ds{\frac{\partial  v_\e}{\partial \nu_\e}(t,x,y)=0,\ \ \ (x,y) \in\,\partial G_{\e},\ \ \ \ \ \ \ v_\e(0,x,y)=f_0^\vee (x,y\e^{-1}),}
\end{array}\r.
\end{equation}
where $\partial/\partial \nu_\e$ denotes the normal derivative at the boundary of $G_\e$ and $Q_\e=I_{\e,1}Q$.

\end{document}